\documentclass[reqno,a4paper]{amsart}

\usepackage[utf8]{inputenc}
\usepackage[T1]{fontenc}
\usepackage[british]{babel}
\usepackage{amssymb,amsthm,amsmath}
\usepackage{mathtools}
\usepackage{calrsfs}
\usepackage{mathbbol}
\usepackage{mathabx}
\usepackage{graphicx}
\usepackage[all]{xy}
\usepackage{enumitem}
\usepackage{booktabs}
\usepackage{xcolor}
\usepackage[colorlinks]{hyperref}

\allowdisplaybreaks

\theoremstyle{plain}
\newtheorem{lemma}[subsection]{Lemma}
\newtheorem{proposition}[subsection]{Proposition}
\newtheorem{theorem}[subsection]{Theorem}
\newtheorem{corollary}[subsection]{Corollary}

\theoremstyle{remark}
\newtheorem{example}[subsection]{Example}
\newtheorem{remark}[subsection]{Remark}

\theoremstyle{definition}
\newtheorem{definition}[subsection]{Definition}
\newtheorem{notation}[subsection]{Notation}
\newtheorem{convention}[subsection]{Convention}

\newcommand{\Cech}{\ensuremath{\check C}}
\newcommand{\HH}{\ensuremath{\check H}}
\newcommand{\Tsjech}{\v Cech}
\newcommand{\defn}{\textbf}
\newcommand{\del}{\partial}
\newcommand{\Diag}{\ensuremath{\Delta}}
\newcommand{\noproof}{\hfill \qed}
\renewcommand{\square}{\raisebox{.4mm}{\,\ensuremath{\boxvoid}}}
\newcommand{\To}{\Rightarrow}
\newcommand{\two}{\rightrightarrows}

\DeclareMathOperator{\ab}{Ab}
\DeclareMathOperator{\Cod}{Cod}
\DeclareMathOperator{\coeq}{coeq}
\DeclareMathOperator{\coker}{coker}
\DeclareMathOperator{\colim}{colim}
\DeclareMathOperator{\Dom}{Dom}
\DeclareMathOperator{\Eq}{Eq}
\DeclareMathOperator{\Ker}{Ker}
\DeclareMathOperator{\Nerve}{Ner}
\newcommand{\R}[1]{\Eq(#1)}

\newcommand{\A}{\ensuremath{\mathbb{A}}}
\newcommand{\B}{\ensuremath{\mathbb{B}}}
\newcommand{\C}{\ensuremath{\mathbb{C}}}
\newcommand{\D}{\ensuremath{\mathbb{D}}}
\newcommand{\X}{\ensuremath{\mathbb{X}}}

\newcommand{\E}{\ensuremath{\mathcal{E}}}
\newcommand{\Z}{\ensuremath{\mathcal{Z}}}

\newcommand{\Ab}{\ensuremath{\mathsf{Ab}}}
\newcommand{\Grp}{\ensuremath{\mathsf{Grp}}}
\newcommand{\Arr}{\ensuremath{\mathsf{Arr}}}
\newcommand{\Cat}{\ensuremath{\mathsf{Cat}}}
\newcommand{\CExt}{\ensuremath{\mathsf{CExt}}}
\newcommand{\Ext}{\ensuremath{\mathsf{Ext}}}
\newcommand{\Grpd}{\ensuremath{\mathsf{Grpd}}}
\newcommand{\RG}{\ensuremath{\mathsf{RG}}}
\newcommand{\s}{\ensuremath{\mathsf{s}}}
\newcommand{\Set}{\ensuremath{\mathsf{Set}}}

\def\pullback{
 \ar@{-}[]+R+<6pt,-1pt>;[]+RD+<6pt,-6pt>%
 \ar@{-}[]+D+<1pt,-6pt>;[]+RD+<6pt,-6pt>}

\newdir{>>}{{}*!/3.5pt/:(1,-.2)@^{>}*!/3.5pt/:(1,+.2)@_{>}*!/7pt/:(1,-.2)@^{>}*!/7pt/:(1,+.2)@_{>}}
\newdir{ >>}{{}*!/8pt/@{|}*!/3.5pt/:(1,-.2)@^{>}*!/3.5pt/:(1,+.2)@_{>}}
\newdir{ |>}{{}*!/-3.5pt/@{|}*!/-8pt/:(1,-.2)@^{>}*!/-8pt/:(1,+.2)@_{>}}
\newdir{ >}{{}*!/-8pt/@{>}}
\newdir{>}{{}*:(1,-.2)@^{>}*:(1,+.2)@_{>}}
\newdir{<}{{}*:(1,+.2)@^{<}*:(1,-.2)@_{<}}

\begin{document}

\author{F.~Afsa}
\author{T.~Van~der Linden}

\email[F.~Afsa]{florent.afsa@uclouvain.be}
\email[T.~Van~der Linden]{tim.vanderlinden@uclouvain.be}
\email[T.~Van~der Linden]{tim.van.der.linden@vub.be}

\address[F.~Afsa, T.~Van~der Linden]{Institut de Recherche en Math\'ematique et Physique, Universit\'e catholique de Louvain, che\-min du cyclotron~2 bte~L7.01.02, B--1348 Louvain-la-Neuve, Belgium}
\address[T.~Van~der Linden]{Mathematics \& Data Science, Vrije Universiteit Brussel, Pleinlaan 2, B--1050 Brussel, Belgium}

\thanks{The second author is a Research Director of the Fonds de la Recherche Scientifique--FNRS. He wishes to thank Dr~Viji J.~Thomas and the Indian Institute of Science Education and Research at Thiruvananthapuram, and Prof.~Manuel Ladra and the University of Santiago de Compostela for their kind hospitality.}

\subjclass[2020]{18E13, 18G50, 18G40, 20J05}
\keywords{Categorical Galois theory; \Tsjech\ derived functor; higher central extension; Hopf formula; semi-abelian category; spectral sequence}
\date{}

\title[Higher \v Cech derived functors via Galois theory]{Higher \v Cech derived functors via Galois theory}

\begin{abstract}
    We explore sufficient conditions for the higher \Tsjech\ derived functors of a Birkhoff reflector to agree with its ordinary, Barr--Beck comonadic homology. Using categorical Galois theory together with a spectral-sequence argument, we prove that this is the case for the reflector from any semi-abelian variety onto a subvariety: for every \(n\geq 1\), the \(n\)th \Tsjech\ derived functor of the reflector coincides with its \((n+1)\)st comonadic homology object. More generally, the comparison holds for any Birkhoff reflector of a semi-abelian category equipped with a comonad satisfying Quillen's conditions.
\end{abstract}

\maketitle

\section{Introduction}

In his foundational article~\cite{Hopf}, Hopf expressed the second integral homology group of a group \(G\) in purely algebraic terms: if \(G\) is presented as a quotient \(F/R\) of a free group \(F\) by a normal subgroup \(R\), then
\[
    H_{2}(G)\cong \frac{R\cap [F,F]}{[F,R]}\text{,}
\]
the right-hand side being independent of the chosen presentation. In~\cite{Brown-Ellis}, Brown and Ellis extended this result to all degrees: the homology group \(H_{n+1}(G)\) admits an analogous description in terms of an \(n\)-fold presentation of \(G\), with an intersection of commutators in the denominator. Their proof is topological.

An algebraic proof of these higher Hopf formulae was found by Donadze, Inassaridze and Porter~\cite{Donadze-Inassaridze-Porter}. The key concept in their approach is that of a \Tsjech\ derived functor: rather than deriving a functor by applying it to a simplicial resolution~\cite{Barr-Beck,Tierney-Vogel2}, one applies it to the \Tsjech\ complexes induced by an \(n\)-fold presentation, and takes the homology of the diagonal. The name is not incidental, and neither the term nor the notion originates in~\cite{Donadze-Inassaridze-Porter}. The \Tsjech\ complex of a presentation \(f\colon{X\to Y}\) is the nerve of its kernel pair (Section~\ref{section reflection nerve}), so that its object of \(n\)-simplices is the \((n+1)\)-fold fibre power of \(f\); the fibre powers play here the part played by the \((n+1)\)-fold intersections in the nerve of an open covering, the construction by which Aleksandrov and \Tsjech, in the late 1920s, carried homology beyond simplicial complexes. \Tsjech\ derived functors of a functor on the category of groups were developed by Inassaridze~\cite{IH,Inasaridze75}, and had already been used to obtain a Hopf-type formula for the second Conduch\'e--Ellis homology of a precrossed module (Inassaridze and Khmaladze~\cite{IK}). What~\cite{Donadze-Inassaridze-Porter} contributes is the \(n\)-fold version, together with the Hopf formulae it yields. Their main computational tool is a result of Loday~\cite{Loday} which calculates the homology of the diagonal of the multiple nerve of an \(n\)-fold groupoid.

On the other hand, a categorical proof, valid beyond the category of groups, was given in~\cite{EGVdL}: in a semi-abelian category~\cite{Janelidze-Marki-Tholen} with enough projectives, the homology objects arising from any Birkhoff reflector \(F\)---in the varietal case, comonadic homology in the sense of Barr and Beck~\cite{Barr-Beck}---admit Hopf formulae in all degrees, obtained by interpreting them through categorical Galois theory~\cite{Janelidze-Kelly, Janelidze:Hopf} in terms of higher central extensions.

The Galois-theoretic reading of the Hopf formulae goes back to Janelidze. Categorical Galois theory was introduced in~\cite{Janelidze:Pure}---see~\cite{Borceux-Janelidze} for a systematic treatment---and applied by Janelidze and Kelly~\cite{Janelidze-Kelly} to give a categorical account of the relative central extensions of Fr\"ohlich~\cite{Frohlich}, Lue~\cite{Lue} and Furtado-Coelho~\cite{Furtado-Coelho}. Answering a question of Ronald Brown, Janelidze then introduced the double central extensions of groups and characterised them by means of commutators~\cite{Janelidze:Double}; his lecture~\cite{Janelidze:Hopf-talk} proposed to iterate this step into a tower of higher central extensions and to read the Brown--Ellis formulae off it, a programme carried out in the semi-abelian setting in~\cite{EGVdL}. His characterisation of double central extensions was in the meantime extended to Mal'tsev varieties by Gran and Rossi~\cite{Gran-Rossi}, to semi-abelian categories in~\cite{RVdL}, and, beyond the semi-abelian context, to exact Mal'tsev categories in~\cite{EverVdL3}.

Hopf formulae in higher degrees are by now available in a range of settings. Besides the original case of groups~\cite{Conrad,Rodicio,Stohr,Brown-Ellis,Donadze-Inassaridze-Porter}, they have been obtained for homotopy \((n+1)\)-types in the shape of crossed \(n\)-cubes~\cite{CIKL-nTypes}, for Leibniz \(n\)-algebras~\cite{CKL}, for precrossed modules with coefficients in crossed modules~\cite{EGVdL}, for \(n\)-fold internal groupoids in a semi-abelian category~\cite{EG-honfg}, for compact groups and other situations in which the reflector is protoadditive~\cite{Everaert-Gran-TT}, and recently for skew braces~\cite{GranLetourmyVendramin2025}. Everaert~\cite{EverHopf} gives a direct proof that the higher Hopf formulae do not depend on the chosen \(n\)-fold presentation, and~\cite{EverMaltsev,RenaudVdL} carry the theory of higher central extensions on which they rest beyond the semi-abelian context.

Thus two general methods lead to the same Hopf formulae: one through \Tsjech\ derived functors, the other through Galois theory. \emph{How exactly are the two approaches related?} Answering that question is the aim of this article. We prove Theorem~\ref{Theorem Comparison Level n}: for the reflector \(F\) from a semi-abelian variety \(\C\) to a subvariety~\(\X\), the higher \Tsjech\ derived functors of \(F\) coincide with its ordinary homology---for every \(n\geq 1\), the \(n\)th \Tsjech\ derived functor of \(F\) at \(Y\) is isomorphic to the \((n+1)\)st homology object \(H_{n+1}(Y,F)\) of \(Y\) with coefficients in \(F\), the degree shift being the usual one between the simplicial numbering of derived functors and the group homology numbering of~\cite{EGVdL}. The hypothesis that \(\C\) is a variety is what makes these comonadic homology objects available, and it may be weakened accordingly: the comparison holds whenever \(\C\) is a semi-abelian category carrying a comonad which satisfies Quillen's conditions (Theorem~\ref{Theorem Comparison General}); the underlying Galois-theoretic identification of the fundamental group with the first \Tsjech\ derived functor (Theorem~\ref{Theorem Comparison Level 1}) holds more generally still, for any Birkhoff reflector of a semi-abelian category with enough projectives. The agreement between the two families of Hopf formulae is thereby explained at a structural level: both approaches compute the same derived functors.

The proof combines two main ingredients. The first is Galois-theoretical: the fundamental group of an object \(Y\), defined by means of a weakly universal central extension, may be computed directly from the kernel pair of any projective presentation of \(Y\), without centralising (Theorem~\ref{Theorem Interpretation Fundamental Group}); this identifies it with the first \Tsjech\ derived functor of \(F\) at \(Y\) (Theorem~\ref{Theorem Comparison Level 1}). The second is simplicial: a categorical version of Loday's theorem (Theorem~\ref{Loday Theorem}), which we prove by means of Quillen's spectral sequences for double simplicial objects~\cite{DGQlln1966}, available in any semi-abelian category by~\cite{Afsa-Spectral}. An induction along the tower of Galois structures of~\cite{EGVdL} then yields the comparison in all degrees.

Let us set out, result by result, what is new here and what we recall from elsewhere. The higher Hopf formulae and the tower of Galois structures along which we perform our proof by induction are due to Everaert, Gran and Van der Linden~\cite{EGVdL}; the \Tsjech\ derived functors and the group-theoretic form of Loday's theorem go back to Donadze, Inassaridze and Porter~\cite{Donadze-Inassaridze-Porter}; and the degree-one comparison \(\HH_{1}F(Y)\cong H_{2}(Y,F)\) was first proved---by a direct simplicial argument independent of ours, and in the generality of Theorem~\ref{Theorem Comparison General}---by Culot and Everaert~\cite{Culot-Everaert}, who leave the higher degrees open; our overlap with their work is confined to that single degree, and Remark~\ref{Remark Culot Everaert} sets out how the two accounts differ. That the Galois groupoid, and hence the fundamental group, may be computed from a projective presentation without first centralising (Lemma~\ref{Lemma Simple Galois Groupoid}, Theorem~\ref{Theorem Interpretation Fundamental Group}) is in essence already contained in Bourn and Rodelo~\cite{Bourn-Rodelo2}, as recorded in Remark~\ref{Remark No Centralisation}; what Section~\ref{section fundamental group} contributes is the explicit centralisation-free form the induction needs. The genuinely new results are the categorical version of Loday's theorem (Theorem~\ref{Loday Theorem}), valid in any semi-abelian category, and the inductive comparison of Section~\ref{section induction}, which carries the degree-one isomorphism up the entire tower to yield \(\HH_{n}F(Y)\cong H_{n+1}(Y,F)\) for all \(n\geq 1\).

\subsection*{Structure of the paper}
Section~\ref{section basic concepts} recalls the needed concepts from categorical Galois theory, the construction of the centralisation functor, and the behaviour of limits and colimits in categories of extensions. Section~\ref{section fundamental group} studies the fundamental group of an object with respect to a Birkhoff reflector \(F\), characterises the extensions inverted by \(F\), and shows that the Galois groupoid of a projective presentation may be computed without centralising. Section~\ref{section reflection nerve} treats nerves of internal groupoids and \Tsjech\ complexes, culminating in the comparison between the fundamental group and the first \Tsjech\ derived functor. Section~\ref{section higher galois} recalls the tower of higher Galois structures and introduces \(n\)-fold groupoids and their nerves. Section~\ref{section loday} establishes the categorical version of Loday's theorem. Section~\ref{section induction} contains the inductive argument leading to the main result, Theorem~\ref{Theorem Comparison Level n}.

\section{Basic concepts}\label{section basic concepts}

The following summarises basic concepts and results from categorical Galois theory, as developed in~\cite{Janelidze-Kelly} and~\cite{EGVdL}.

\begin{definition}
    A \defn{Galois structure} \(\Gamma=(\A,\B,\E,\Z,I,H)\) consists of two categories \(\A\) and \(\B\), an adjunction
    \[
        \xymatrix{\A \ar@<.5ex>[r]^-{I} & \B \ar@<.5ex>[l]^-{H}}
    \]
    with unit \(\eta\colon{1_{\A}\to HI}\) and counit \(\varepsilon\colon{IH\to 1_{\B}}\), and two classes of morphisms \(\E\) in~\(\A\) and \(\Z\) in \(\B\) such that:
    \begin{itemize}
        \item \(\A\) has pullbacks along morphisms in \(\E\);
        \item \(\E\) and \(\Z\) contain all isomorphisms and are stable under pullbacks and under composition;
        \item \(H(\Z)\subseteq \E\);
        \item \(I(\E)\subseteq \Z\);
        \item \(\varepsilon\colon IH\to 1_{\B}\) is a natural isomorphism;
        \item \(\eta_{A}\colon{A\to HI(A)}\) is in \(\E\) for all \(A\) in \(\A\).
    \end{itemize}
    A map in \(\E\) is called an \defn{extension}.
\end{definition}

For each \(A\) in \(\A\), let us write \(\E(A)\) for the full subcategory of the slice category \((\A\downarrow A)\) determined by the extensions with codomain \(A\). For an arrow \(a\colon{A\to B}\) in~\(\A\), we have a functor \(a^{*}\colon{\E(B)\to \E(A)}\) defined by pulling back along \(a\). For every \(A\) in \(\A\), we have an adjunction
\[
    \xymatrix{\E(A) \ar@<.5ex>[r]^-{I^A} & \Z(I(A)) \ar@<.5ex>[l]^-{H^A}}
\]
defined by \(I^{A}(f\colon{X\to A})=I(f)\) and \(H^{A}(g\colon{Y\to I(A)})=\eta_A^*H(g)\). The unit and counit of this adjunction are denoted by \(\eta^{A}\) and \(\varepsilon^{A}\), respectively.

\begin{definition}
    The Galois structure \(\Gamma\) is \defn{admissible} if for every \(A\) in \(\A\) the functor \(H^{A}\) is fully faithful; equivalently, if \(\varepsilon^{A}\) is a natural isomorphism for every~\(A\) in \(\A\).
\end{definition}

Let \(\C\) be a Barr-exact category~\cite{Barr-Grillet-vanOsdol}. Write \(\Arr(\C)\) for the category of arrows in \(\C\), and \(\Ext(\C)\) for the full subcategory of \(\Arr(\C)\) determined by the regular epimorphisms in \(\C\). An \defn{extension} in \(\C\) is simply a regular epimorphism.

\begin{definition}\label{Definition Birkhoff Subcategory}
    A \defn{Birkhoff subcategory} \(\X\) of \(\C\) is a full and replete reflective subcategory closed under subobjects and regular quotients. A reflector \(F\colon \C\to \X\) is called a \defn{Birkhoff reflector}.
\end{definition}

Let \(\E\) be the class of extensions in \(\C\).

\begin{definition}\label{Definition Double Extension}
    A \defn{double extension} in \(\C\) is a commutative square
    \[
        \xymatrix{X \ar[d]_-{f} \ar[r]^-{g} & Y \ar[d]^-{f'} \\
        Z \ar[r]_-{g'} & W}
    \]
    in which \(f\), \(f'\), \(g\) and \(g'\) are extensions, and the induced morphism from \(X\) to the pullback of \(f'\) and \(g'\) is also an extension.
\end{definition}

Every double extension is a pushout square. In a semi-abelian category, every pushout square of extensions is a double extension (essentially by~\cite{Carboni-Kelly-Pedicchio}).

\begin{definition}
    A full and replete reflective subcategory \(\X\) of \(\C\), with reflector \(F\colon{\C\to \X}\) and unit \(\eta\), is a \defn{strongly \(\E\)-Birkhoff subcategory} if for every extension \(f\colon{X\to Y}\) in \(\C\) the induced square
    \begin{equation}\label{Birkhoff Square}
        \vcenter{\xymatrix{X \ar[d]_-{\eta_{X}} \ar[r]^-{f} & Y \ar[d]^-{\eta_{Y}} \\
        F(X) \ar[r]_-{F(f)} & F(Y)}}
    \end{equation}
    ---the \defn{strong \(\E\)-Birkhoff square} of \(f\)---is a double extension.
\end{definition}

From now on, suppose \(\C\) is a \defn{semi-abelian} category~\cite{Janelidze-Marki-Tholen}: pointed, Barr exact and Bourn protomodular, with binary coproducts. Groups, Lie algebras, crossed modules and, more generally, all varieties of \(\Omega\)-groups in the sense of Higgins~\cite{Higgins} are of this kind. Several arguments below need only the weaker \defn{homological} context: regular instead of exact, and without coproducts. We freely use the calculus these conditions support---kernels and cokernels, exact sequences, the Short Five and \(3\times 3\) Lemmas---for which we refer to Borceux and Bourn~\cite{Borceux-Bourn}.

Let \(\X\) be a strongly \(\E\)-Birkhoff subcategory of \(\C\) with reflector \(F\colon \C\to \X\). In a semi-abelian category, which is exact and Mal'tsev, the strongly \(\E\)-Birkhoff subcategories are exactly the Birkhoff ones~\cite[Section~2]{EGVdL}; from Section~\ref{section fundamental group} onwards we accordingly speak simply of a \defn{Birkhoff reflector}. Write \(\Z\) for the class of regular epimorphisms in \(\X\), and \(H\colon \X\to \C\) for the inclusion functor. Then \(\Gamma=(\C,\X,\E,\Z,F,H)\) is an admissible Galois structure~\cite[Proposition 2.6]{EGVdL}. We suppress \(H\) when no ambiguity arises. An extension \(f\colon{X\to Y}\) in \(\C\) is called \defn{trivial} (with respect to \(\Gamma\)) if its strong \(\E\)-Birkhoff square~\eqref{Birkhoff Square} is a pullback, \defn{central} if its pullback along some regular epimorphism \(y\colon{Z\to Y}\) is trivial, and \defn{normal} if its pullback along itself is trivial. The full subcategory of \(\Ext(\C)\) determined by the central extensions is denoted by \(\CExt(\C)\).

\begin{lemma}\label{split central extension}
    In a semi-abelian category:
    \begin{itemize}
        \item Every trivial extension is central, and every normal extension is central;
        \item \cite[Theorem~4.8]{Janelidze-Kelly} Every central extension which is split is trivial, and every central extension is normal---a semi-abelian category being in particular Goursat.\noproof
    \end{itemize}
\end{lemma}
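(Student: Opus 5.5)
The first bullet follows directly from the definitions, using only that regular epimorphisms in \(\C\) include all isomorphisms and are closed under composition. For the second bullet I would rely on the cited result of Janelidze and Kelly. I would only check that its hypotheses hold: \(\Gamma\) must be an admissible Galois structure with \(\C\) exact Goursat, and both facts are already recorded above.

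\emph{Trivial implies central.} Let \(f\colon X\to Y\) be trivial. Take \(y=1_Y\), which is a regular epimorphism. The pullback of \(f\) along \(1_Y\) is \(f\) itself, up to isomorphism. Triviality is invariant under isomorphism of extensions, because the strong \(\E\)-Birkhoff square is functorial and a square isomorphic to a pullback is again a pullback. Hence \(f\) is central.

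\emph{Normal implies central.} Let \(f\) be normal, so its pullback along \(f\) itself is trivial. Since \(f\) is an extension, i.e.\ a regular epimorphism, \(f\) is one of the allowed maps \(y\) in the definition of centrality. Hence \(f\) is central.

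\emph{Split central implies trivial.} Let \(f\) be central, trivialised by a regular epimorphism \(p\colon Z\to Y\), and let \(s\) be a section of \(f\). Admissibility gives descent: the pullback functor \(p^{*}\) reflects trivial extensions, i.e.\ monadic descent along \(p\) is compatible with the reflection. Janelidze--Kelly use this to show that a split extension which becomes trivial after pulling back is already trivial. I would not reprove this but cite~\cite[Theorem~4.8]{Janelidze-Kelly}.

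\emph{Central implies normal.} Let \(f\colon X\to Y\) be central. The pullback \(f^{*}(f)\colon X\times_Y X\to X\) has a section given by the diagonal, so it is split. It is also central: centrality is pullback-stable, because if \(p^{*}(f)\) is trivial then pulling back further along \(f^{*}(p)\) gives a pullback of a trivial extension, which is trivial by admissibility. By the previous step, \(f^{*}(f)\) is therefore trivial, so \(f\) is normal.

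Stability of trivial extensions under pullback is exactly the point where the Goursat/Mal'tsev property of a semi-abelian category, together with the strong \(\E\)-Birkhoff property, enters in~\cite{Janelidze-Kelly}. This is the only non-formal step, and it is the main obstacle. I would handle it by citation rather than by reproving it.
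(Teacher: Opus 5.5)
Your argument is correct and follows the paper. The paper treats the first bullet as immediate from the definitions and cites \cite[Theorem~4.8]{Janelidze-Kelly} for the second; deriving ``central implies normal'' from ``split central implies trivial'' via the diagonal section of \(\pi_{0}\colon\R{f}\to X\) and the pullback-stability of centrality is the standard argument. Two side remarks are inaccurate, though harmless since you cite the step: \(p^{*}\) does not reflect trivial extensions (otherwise every central extension would be trivial and splitness would be irrelevant), and pullback-stability of trivial extensions is a formal consequence of admissibility, exactly as you use it---the Goursat property is needed for the cited implication itself, which given admissibility is equivalent to ``central implies normal'', a split \(f\) being the pullback of \(\pi_{0}\) along its section.
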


The inclusion functor \({\subseteq_{1}}\colon{\CExt(\C)\to \Ext(\C)}\)---written thus, as in~\cite{EGVdL}, to keep the letter \(H\) free for homology---has a left adjoint \(F_{1}\), called the \defn{centralisation functor}~\cite[4.2]{EGVdL}. To construct it, let \(G\) be the functor defined by \(G(X)\coloneq\Ker(\eta_X\colon X\to F(X))\), with \(\mu_X\coloneq\ker(\eta_X)\colon G(X)\to X\). Now consider, for \(f\colon{X\to Y}\) and its kernel pair \((\R{f},\pi_{0},\pi_{1})\), the diagram
\[
    \xymatrix@C=4em{\Ker(G(\pi_0)) \ar[r]^-{\ker(G(\pi_0))} \ar[d] & G(\R{f}) \ar[d]_-{\mu_{\R{f}}} \ar@<.5ex>[r]^-{G(\pi_{0})} \ar@<-.5ex>[r]_-{G(\pi_{1})} & G(X) \ar[d]^-{\mu_{X}} \\
    \Ker(\pi_0) \ar[r]_-{\ker(\pi_0)} & \R{f} \ar@<.5ex>[r]^-{\pi_{0}} \ar@<-.5ex>[r]_-{\pi_{1}} & X.}
\]
Let \(G_1(f)\) be the composite \(\mu_X\circ G(\pi_1)\circ \ker(G(\pi_0))\colon G_1[f]\coloneq\Ker(G(\pi_0))\to X\) and write \(\rho^{1}_{f}\coloneq\coker(G_1(f))\). Then \(G_1(f)\) is the kernel of \(\rho^{1}_{f}\), and we have a factorisation
\[
    \xymatrix{X \ar[rr]^-{f} \ar[dr]_-{\rho^{1}_{f}} & & Y \\
    & F_{1}[f] \ar[ur]_-{F_{1}(f)} & }
\]
where \((\rho^{1}_{f},1_{Y})\) is the \(f\)-component of the unit of the adjunction \(F_{1}\dashv{\subseteq_{1}}\). By naturality of \(\mu\) and \(f\pi_{0}=f\pi_{1}\),
\[
    f\circ G_{1}(f)=\mu_{Y}\circ G(f\pi_{1})\circ \ker(G(\pi_{0}))=\mu_{Y}\circ G(f)\circ G(\pi_{0})\circ \ker(G(\pi_{0}))=0\text{.}
\]
In particular, \(G_{1}[f]=\Ker(\rho^{1}_{f})\leq\Ker(f)\).

The subcategory \(\CExt(\C)\) happens to be a strongly \(\E^{1}\)-Birkhoff subcategory of \(\Ext(\C)\), where \(\E^{1}\) is the class of double extensions in \(\C\)~\cite[Lemma 4.3]{EGVdL}.

The claim that \(G_{1}(f)\) is the kernel of \(\rho^{1}_{f}\) amounts to \(G_{1}(f)\) being a normal monomorphism, and this is part of the construction of \(F_{1}\) in~\cite[Subsection 4.2]{EGVdL}, where the sequence
\[
    0\to G_{1}[f]\xrightarrow{G_{1}(f)} X\xrightarrow{\rho^{1}_{f}} F_{1}[f]\to 0
\]
is shown to be short exact. Indeed, \(G(\R{f})\) is a reflexive relation on \(G(X)\)---jointly monic because by naturality of \(\mu\) the composite \((\mu_{X}\times\mu_{X})\circ(G(\pi_{0}),G(\pi_{1}))=(\pi_{0},\pi_{1})\circ\mu_{\R{f}}\) is a monomorphism, and reflexive since \(G\) takes the diagonal of \(\R{f}\) to a diagonal for it---hence an equivalence relation, \(\C\) being Mal'tsev. Its normalisation \(G(\pi_{1})\circ\ker(G(\pi_{0}))\) is therefore a monomorphism, and so is its composite \(G_{1}(f)\) with \(\mu_{X}\). It is a normal mono, because \(\mu_{\R{f}}\circ\ker(G(\pi_{0}))\) is a normal monomorphism (as an intersection of normal monomorphisms, the left-hand square above being a pullback) while, again by naturality of \(\mu\), the equality
\[
    G_{1}(f)=\pi_{1}\circ\mu_{\R{f}}\circ\ker(G(\pi_{0}))
\]
exhibits the monomorphism \(G_{1}(f)\) as its regular image along \(\pi_{1}\), which is again normal in \(X\).

\begin{remark}\label{Remark Relative Commutator}
    The functors \(G\) and \(G_{1}\) are commutators in disguise. In the varietal context of Convention~\ref{Convention Comonadic Homology}, \(G(X)=\Ker(\eta_{X})\) is the verbal subobject of \(X\) determined by the identities which cut \(\X\) out of \(\C\); for \(\C=\Grp\) and \(\X=\Ab\) it is the derived subgroup \([X,X]\). Correspondingly, \(G_{1}[f]\) is a relative commutator of \(\Ker(f)\) and~\(X\). For groups and abelianisation it is the subgroup \([K,X]\), where \(K=\Ker(f)\): the factorisation above then reads \(X\to X/[K,X]\to Y\), the extension \(f\) is central exactly when \([K,X]\) is trivial, and \(G_{1}[f]\) is the subgroup \([F,R]\) which occupies the denominator of the Hopf formula recalled in the introduction. Such constructions go back to the Baer invariants of~\cite{Frohlich,Lue,Furtado-Coelho} and the commutator of~\cite{Higgins}; the relative commutator theory implemented by \(G_{1}\) is developed in~\cite{EveraertCommutator,EverVdLRCT}, with the associated Baer invariants in~\cite{EverVdL1,EverVdL2}.
\end{remark}

\(\Ext(\C)\) is not closed under limits in \(\Arr(\C)\) (Example~\ref{Example Limits Ext}). It is, however, closed under all colimits which exist in \(\Arr(\C)\)---and those are computed pointwise.

\begin{proposition}\label{Proposition Colimits Ext}
    Let \(K\colon{\D \to \Ext(\C)}\) be a diagram on a small category \(\D\). Write \(Kd\colon{Ed\to Bd}\), with \(E\) and \(B\) the induced diagrams in \(\C\). If the colimits of \(E\) and \(B\) exist in \(\C\), then the induced comparison morphism \(k\colon{\colim E\to \colim B}\) is an extension, and together with the pointwise colimit cocone it is the colimit of \(K\) in~\(\Ext(\C)\).
\end{proposition}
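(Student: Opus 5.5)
We prove that \(k\) is a regular epimorphism and then check the universal property in \(\Ext(\C)\), using that \(\Ext(\C)\) is a full subcategory of \(\Arr(\C)\), where colimits are pointwise.

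\emph{Step 1: the pointwise colimit in \(\Arr(\C)\).} Write \(\lambda_d\colon Ed\to\colim E\) and \(\beta_d\colon Bd\to\colim B\) for the colimit cocones. For each arrow \(\delta\colon d\to d'\) of \(\D\), the morphism \(K\delta\) is a commutative square \((E\delta,B\delta)\) from \(Kd\) to \(Kd'\). Hence the family \(\beta_d\circ Kd\colon Ed\to\colim B\) is a cocone on \(E\). This yields a unique \(k\colon\colim E\to\colim B\) with \(k\lambda_d=\beta_d\circ Kd\) for all \(d\). The pairs \((\lambda_d,\beta_d)\) then form a cocone on \(K\) in \(\Arr(\C)\), and it is the colimit there. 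Indeed, a cocone \((x_d,y_d)\colon Kd\to(g\colon X\to Y)\) induces \(x\colon\colim E\to X\) and \(y\colon\colim B\to Y\) componentwise. The square \(gx=yk\) commutes because both composites agree after precomposing with every \(\lambda_d\), and the \(\lambda_d\) are jointly epic. Uniqueness is componentwise.

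\emph{Step 2: \(k\) is a regular epimorphism.} This is the only point needing care.
\begin{itemize}
\item In a regular category, a morphism \(k\) is a regular epimorphism as soon as some family of morphisms into its codomain that is jointly strongly epimorphic factors through \(k\). Equivalently, \(k\) is then a strong epimorphism, and strong and regular epimorphisms coincide in a regular category.
\item The colimit injections \(\beta_d\) form such a family: they are jointly extremally epic. If \(\beta_d=m\circ u_d\) for a monomorphism \(m\colon M\to\colim B\), then the \(u_d\) form a cocone on \(B\), because \(m\) is monic. The induced \(u\) satisfies \(mu\beta_d=\beta_d\), so \(mu=1\) and \(m\) is an isomorphism.
\item Each \(\beta_d\circ Kd=k\lambda_d\) factors through \(k\). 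Since each \(Kd\) is an epimorphism, the family \(\beta_d\circ Kd\) is still jointly strongly epic.
\end{itemize}
To conclude directly: factor \(k=m\circ e\) with \(e\) a regular epimorphism and \(m\) a monomorphism. Then each \(\beta_d\circ Kd\) factors through \(m\). For \(\beta_d\) itself to factor through \(m\), we need a diagonal filler in the square with \(Kd\) on top and \(m\) at the bottom. This filler exists because \(Kd\) is a regular epimorphism, hence strong, so it has the left lifting property against monomorphisms. The fillers \(u_d\) form a cocone on \(B\): they are compatible after composing with the monomorphism \(m\). So \(m\) is split epic and monic, hence an isomorphism, and \(k\cong e\) is a regular epimorphism.

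\emph{Step 3: the universal property in \(\Ext(\C)\).} Since \(k\) lies in \(\Ext(\C)\) and \(\Ext(\C)\) is full in \(\Arr(\C)\), the \(\Arr(\C)\)-colimit cocone from Step 1 is also a colimit in \(\Ext(\C)\).

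The main obstacle is Step 2. Regular epimorphisms are not in general closed under colimits without the strong-epi/mono lifting argument, so the diagonal fillers and their cocone compatibility are where the proof must be done carefully.
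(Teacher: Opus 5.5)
Your argument is correct and is essentially the paper's: you factor \(k=m\circ e\), use that each \(Kd\) is a strong epimorphism to lift \(\beta_d\) through \(m\), check that the lifts form a cocone because \(m\) is monic, and conclude that \(m\) is a monomorphism with a section, hence an isomorphism. One imprecision: the preliminary bullet in Step 2 justifies that the family \(\beta_d\circ Kd\) is jointly strongly epic ``since each \(Kd\) is an epimorphism'', which needs \(Kd\) to be strong rather than merely epic; your direct conclusion uses exactly that property, so it does not depend on the bullet.
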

\begin{proof}
    Colimits in \(\Arr(\C)\) are computed pointwise, so the colimit of \(K\) in \(\Arr(\C)\) is the arrow \(k\), together with the cocone whose component at \(d\) is the square \((\lambda_{d},\mu_{d})\colon{Kd\to k}\) formed by the colimit cocones
    \[
        \lambda_{d}\colon{Ed\to \colim E}
        \qquad\text{and}\qquad
        \mu_{d}\colon{Bd\to \colim B}.
    \]
    Since \(\Ext(\C)\) is a full subcategory of \(\Arr(\C)\), it suffices to prove that \(k\) is an extension.

    Let \(k=m\circ e\) be the image factorisation of \(k\), with \(e\colon{\colim E\to I}\) a regular epimorphism and \(m\colon{I\to\colim B}\) a monomorphism. For every \(d\) in \(\D\) we have \(\mu_{d}\circ Kd=k\circ \lambda_{d}=m\circ (e\circ\lambda_{d})\), so that, \(Kd\) being a regular and hence a strong epimorphism, the morphism \(\mu_{d}\) factors through \(m\): there is a unique \(\nu_{d}\colon{Bd\to I}\) such that \(\mu_{d}=m\circ\nu_{d}\). By this uniqueness and since \(m\) is a monomorphism, the \(\nu_{d}\) form a cocone on \(B\), which induces a morphism \(t\colon{\colim B\to I}\) satisfying \(t\circ \mu_{d}=\nu_{d}\) for all \(d\). Then \(m\circ t\circ\mu_{d}=\mu_{d}\) for all \(d\), so that \(m\circ t=1_{\colim B}\), the \(\mu_{d}\) being jointly epimorphic. Thus the monomorphism \(m\) is also a split epimorphism, hence an isomorphism, and \(k\) is an extension.
\end{proof}

Of the limits in \(\Ext(\C)\), we shall only need kernel pairs of double extensions. Recall~\cite[Proposition~3.3]{Bourn2003} that in a regular Mal'tsev category---so in particular in any homological category---a commutative square of extensions as in Definition~\ref{Definition Double Extension} is a double extension if and only if the induced morphism \(\R{g}\to\R{g'}\) between the kernel pairs of its horizontal arrows is an extension.

\begin{lemma}\label{Lemma Kernel Pairs Ext}
    Let \((g,g')\colon{a\to b}\) be a morphism in \(\Ext(\C)\) which, as a square in~\(\C\), is a double extension. Then the kernel pair of \((g,g')\) in \(\Arr(\C)\) (which is the arrow \({\R{g}\to \R{g'}}\) induced by \(a\) and \(b\), together with the pointwise projections) lies in~\(\Ext(\C)\), and is the kernel pair of \((g,g')\) in \(\Ext(\C)\).
\end{lemma}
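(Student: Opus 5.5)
The plan is to compute the kernel pair in \(\Arr(\C)\) pointwise, show that the resulting arrow is a regular epimorphism, and then conclude that it is also the kernel pair in \(\Ext(\C)\) because \(\Ext(\C)\) is a full subcategory. Write \(a\colon{X\to Z}\), \(b\colon{Y\to W}\), \(g\colon{X\to Y}\) and \(g'\colon{Z\to W}\), so that the given square reads \(b\circ g=g'\circ a\).

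First, limits in \(\Arr(\C)\) are computed pointwise. So the kernel pair of \((g,g')\) in \(\Arr(\C)\) is the arrow \(r\colon{\R{g}\to\R{g'}}\), defined by \(r=a\times a\) restricted to \(\R{g}\). The two projections are the squares \((\pi_{i},\pi'_{i})\colon{r\to a}\) for \(i=0,1\), where \(\pi_{i}\) and \(\pi'_{i}\) are the kernel pair projections of \(g\) and \(g'\). The universal property follows from that of the kernel pairs of \(g\) and \(g'\) separately, together with uniqueness of the induced maps into \(\R{g'}\), which are jointly monic.

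Next comes the key point, that \(r\) is an extension. This is exactly the characterisation recalled just before the statement: in a regular Mal'tsev (in particular homological, hence semi-abelian) category, a commutative square of extensions is a double extension if and only if the induced morphism \(\R{g}\to\R{g'}\) between the kernel pairs of its horizontal arrows is an extension \cite[Proposition~3.3]{Bourn2003}. Our square is a double extension by hypothesis, so \(r\) is a regular epimorphism and lies in \(\Ext(\C)\). The only care needed is to check that the "induced morphism" in that proposition is precisely our \(r\), namely the restriction of \(a\times a\). This is immediate from the commutativity \(b\circ g=g'\circ a\), since for \((x,x')\in\R{g}\) we have \(g'(a(x))=b(g(x))=b(g(x'))=g'(a(x'))\).

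Finally, \(\Ext(\C)\) is a full subcategory of \(\Arr(\C)\). The projection squares are therefore morphisms in \(\Ext(\C)\), and any cone in \(\Ext(\C)\) over the pair \((g,g')\), \((g,g')\) is in particular a cone in \(\Arr(\C)\). It thus factors uniquely through \(r\), and the factorisation is a morphism of \(\Ext(\C)\) by fullness. Hence \(r\) is the kernel pair of \((g,g')\) in \(\Ext(\C)\).

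I do not expect a real obstacle. The only substantive input is Bourn's characterisation of double extensions via kernel pairs, and the rest is routine bookkeeping with pointwise limits and fullness.
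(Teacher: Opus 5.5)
Your proposal is correct and follows the paper's proof essentially step for step. Like the paper, you compute the kernel pair pointwise in \(\Arr(\C)\), use Bourn's characterisation of double extensions to see that the induced arrow \(\R{g}\to\R{g'}\) is an extension, and conclude from fullness of \(\Ext(\C)\) in \(\Arr(\C)\) that this limit is also the kernel pair in \(\Ext(\C)\).
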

\begin{proof}
    Kernel pairs in \(\Arr(\C)\) are computed pointwise, and by the characterisation of double extensions recalled above, the induced arrow \(\R{g}\to\R{g'}\) is an extension. A limit, taken in \(\Arr(\C)\), of a diagram in the full subcategory \(\Ext(\C)\) which happens to lie in \(\Ext(\C)\) is a limit in \(\Ext(\C)\); the claim follows.
\end{proof}

\begin{example}\label{Example Limits Ext}
    \(\Ext(\C)\) need not be closed in \(\Arr(\C)\) under limits, not even under finite ones. Consider, in the category \(\Ab\) of abelian groups, the quotient map \(q\colon{\mathbb{Z}\to\mathbb{Z}/2}\) as an object of \(\Ext(\Ab)\), and the two endomorphisms \((1_{\mathbb{Z}},1_{\mathbb{Z}/2})\) and \((3\cdot(-),1_{\mathbb{Z}/2})\) of \(q\). Their equaliser in \(\Arr(\Ab)\), computed pointwise, is the arrow \(0\to \mathbb{Z}/2\), which is not an extension. The equaliser does exist in \(\Ext(\Ab)\), namely \(0\to 0\); the point is that the inclusion \(\Ext(\Ab)\to\Arr(\Ab)\) does not preserve it.
\end{example}

\begin{lemma}\label{Lemma Kernels Double Extension}
    Consider a double extension as in Definition~\ref{Definition Double Extension}. The induced morphisms \(\Ker(f)\to\Ker(f')\) and \(\Ker(g)\to\Ker(g')\) are extensions.
\end{lemma}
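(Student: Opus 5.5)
By the symmetry of Definition~\ref{Definition Double Extension} in the roles of the horizontal and vertical arrows, it suffices to treat \(\Ker(g)\to\Ker(g')\). Let \(P=Z\times_{W}Y\) be the pullback of \(f'\) and \(g'\), and write the comparison morphism as \(p\colon X\to P\), an extension by assumption. Let \(g''\colon P\to Z\) be the pullback projection, so that \(g=g''\circ p\).

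First I identify kernels. The kernel of \(g''\) is canonically isomorphic to \(\Ker(g')\), since the kernel of a pullback projection is the kernel of the arrow it is pulled back from. Under this isomorphism, the induced morphism \(\Ker(g)\to\Ker(g')\) becomes the restriction \(\bar p\colon\Ker(g)\to\Ker(g'')\) of \(p\). So the task reduces to the following: given composable extensions \(p\colon X\to P\) and \(g''\colon P\to Z\), show that the induced map \(\Ker(g''p)\to\Ker(g'')\) is an extension.

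Next I treat this as a pullback. The square with vertices \(\Ker(g)\), \(X\), \(\Ker(g'')\) and \(P\) is a pullback, because \(\Ker(g)=p^{-1}(\Ker(g''))\): a generalised element \(x\) of \(X\) satisfies \(g''px=0\) iff \(px\) factors through \(\Ker(g'')\). Since regular epimorphisms are pullback-stable in a regular category, \(\bar p\) is the pullback of the extension \(p\) along the monomorphism \(\ker(g'')\), hence itself an extension.

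The only step needing care is this pullback identification together with the kernel isomorphism \(\Ker(g'')\cong\Ker(g')\). Both are routine limit manipulations that hold in any pointed category with the relevant limits. Regularity of \(\C\), which we have, supplies the stability of extensions under pullback. The argument for \(\Ker(f)\to\Ker(f')\) is the same with the roles of the horizontal and vertical arrows exchanged.
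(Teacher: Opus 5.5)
Your proof is correct and is essentially the paper's: identify the kernel of a projection out of \(Z\times_{W}Y\) with the corresponding kernel of the original square, note that the induced map on kernels is the pullback of the comparison extension along that kernel, and conclude by pullback-stability. The paper writes this out for \(\Ker(f)\to\Ker(f')\) using the projection to \(Z\), and you do the symmetric case. One slip to fix: for \(g=g''\circ p\) and \(\Ker(g'')\cong\Ker(g')\) to hold, \(g''\) must be the projection \(P\to Y\) (the pullback of \(g'\) along \(f'\)), not a map \(P\to Z\).
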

\begin{proof}
    Write \(c\colon{X\to Z\times_{W}Y}\) for the comparison morphism to the pullback of \(f'\) and \(g'\), which is an extension by assumption. The first projection
    \[
        {g'}^{*}(f')\colon{Z\times_{W}Y\to Z,}
    \]
    the pullback of \(f'\) along \(g'\), has kernel canonically isomorphic to \(\Ker(f')\); under this isomorphism, the induced morphism \(\Ker(f)\to \Ker(f')\) becomes the restriction \(\Ker(f)\to \Ker({g'}^{*}(f'))\) of \(c\). Since \({g'}^{*}(f')\circ c=f\), this restriction is a pullback of~\(c\) along the kernel \(\ker({g'}^{*}(f'))\); extensions being pullback-stable, it is an extension.
\end{proof}

\begin{convention}\label{Convention Generalised Elements}
    Several arguments below are carried out with \defn{generalised elements}: a generalised element of an object \(A\) is a morphism \(x\colon{T\to A}\), and we write \(\phi(x)\) for the composite \(\phi\circ x\), and \(0\) for a zero morphism. As \(\C\) is pointed, each representable functor \(\C(T,-)\) takes values in pointed sets and preserves finite limits. Consequently, any property of a diagram built from finite limits and zero morphisms---for instance that a morphism vanishes, or that a family of morphisms is jointly monic---holds as soon as it holds for all generalised elements.
\end{convention}

\section{The fundamental group}\label{section fundamental group}

Let \(\C\) be a semi-abelian category. Let \(F\colon \C\to \X\) be a Birkhoff reflector and \(F_{1}\colon{\Ext(\C)\to \CExt(\C)}\) the induced centralisation functor.

\begin{lemma}\label{Lemma F-Inverted Extensions}
    An extension \(e\colon{A\to B}\) in \(\C\) is inverted by \(F\)---that is, \(F(e)\) is an isomorphism---if and only if \(\Ker(e)\leq G(A)\).
\end{lemma}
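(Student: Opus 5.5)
We work with the strong \(\E\)-Birkhoff square~\eqref{Birkhoff Square} of \(e\), which is a double extension because \(F\) is a Birkhoff reflector. Its vertical arrows \(\eta_A\) and \(\eta_B\) are regular epimorphisms with kernels \(G(A)\) and \(G(B)\).

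\emph{First direction.} Suppose \(F(e)\) is an isomorphism. Then \(\eta_B\circ e\cong F(e)\circ\eta_A\) has the same kernel as \(\eta_A\), namely \(G(A)\). Since \(\Ker(e)\) is contained in the kernel of \(\eta_B\circ e\), we get \(\Ker(e)\leq G(A)\). This is checked directly with generalised elements, as in Convention~\ref{Convention Generalised Elements}: if \(e(x)=0\) then \(\eta_B e(x)=0\), so \(F(e)\eta_A(x)=0\), and hence \(\eta_A(x)=0\).

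\emph{Second direction.} Suppose \(\Ker(e)\leq G(A)=\Ker(\eta_A)\). The morphism \(F(e)\) is a regular epimorphism, since \(F\) preserves extensions; it therefore suffices to show that \(F(e)\) is a monomorphism, and in a semi-abelian category this means that \(\Ker(F(e))=0\). Apply Lemma~\ref{Lemma Kernels Double Extension} to the double extension~\eqref{Birkhoff Square}, with the horizontal arrows \(e\) and \(F(e)\). The induced morphism \(\Ker(e)\to\Ker(F(e))\), which is the restriction of \(\eta_A\), is then an extension. By hypothesis this restriction is zero, because \(\Ker(e)\leq\Ker(\eta_A)\). A zero morphism that is a regular epimorphism has zero codomain, so \(\Ker(F(e))=0\).

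\emph{Main point.} The only non-formal step is the surjectivity of \(\Ker(e)\to\Ker(F(e))\), which is exactly what Lemma~\ref{Lemma Kernels Double Extension} provides once we know the Birkhoff square is a double extension. If that route were unavailable, an alternative is a direct argument: the composite \(A\xrightarrow{e}B\xrightarrow{\eta_B}F(B)\) is a regular epimorphism, and the hypothesis gives \(\Ker(e)\leq G(A)\). We could then aim to show that \(G(B)\) is the regular image \(e(G(A))\), using that \(\X\) is closed under quotients so that \(B/e(G(A))\cong A/(G(A)\vee\Ker(e))=A/G(A)\) lies in \(\X\). That route also yields \(F(B)\cong F(A)\), but the double-extension argument is shorter.
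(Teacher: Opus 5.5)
Your proof is correct and is essentially the paper's. As in the paper, you get invertibility of \(F(e)\) from \(\Ker(e)\leq G(A)\) by applying Lemma~\ref{Lemma Kernels Double Extension} to the strong \(\E\)-Birkhoff square, and the converse is the same observation that \(\eta_{A}\circ\ker(e)\) factors through \(\Ker(F(e))=0\), which you phrase with generalised elements (note only that \(\eta_{B}\circ e=F(e)\circ\eta_{A}\) holds on the nose by naturality of \(\eta\), not merely up to isomorphism).
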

\begin{proof}
    The strong \(\E\)-Birkhoff square of \(e\) is a double extension. Hence Lemma~\ref{Lemma Kernels Double Extension} implies that the restriction \(\Ker(e)\to \Ker(F(e))\) of \(\eta_{A}\) is an extension. If
    \[
        \Ker(e)\leq G(A)=\Ker(\eta_{A}),
    \]
    then this restriction is zero, so that \(\Ker(F(e))=0\) and the regular epimorphism \(F(e)\) is an isomorphism. Conversely, if \(F(e)\) is an isomorphism, then \(\eta_{A}\circ \ker(e)\), which factors through \(\ker(F(e))=0\), is zero, so that \(\Ker(e)\leq \Ker(\eta_{A})=G(A)\).
\end{proof}

\begin{lemma}[Lemma~6.2 in~\cite{EGVdL}]\label{Lemma reflection of centralisation}
    For any extension \(f\colon{X\to Y}\) in \(\C\) we have that \(F(F_{1}(f))\cong F(f)\); more precisely, the functor \(F\) sends the canonical comparison \(\rho^{1}_{f}\colon X\to F_{1}[f]\) to an isomorphism \(F(\rho^{1}_{f})\colon F(X)\to F(F_{1}[f])\) such that
    \[
        F(F_{1}(f)) \circ F(\rho^{1}_{f})=F(f).
    \]
\end{lemma}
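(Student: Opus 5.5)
The plan is to apply Lemma~\ref{Lemma F-Inverted Extensions} to the extension \(\rho^{1}_{f}\colon X\to F_{1}[f]\). That lemma says \(F(\rho^{1}_{f})\) is an isomorphism exactly when \(\Ker(\rho^{1}_{f})\leq G(X)\). From the construction recalled in Section~\ref{section basic concepts}, \(\rho^{1}_{f}\) is the cokernel of the normal monomorphism \(G_{1}(f)\), so it is a regular epimorphism and \(\Ker(\rho^{1}_{f})=G_{1}[f]\). The main step is therefore to show \(G_{1}[f]\leq G(X)=\Ker(\eta_{X})\).

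For that inclusion we use the defining formula
\[
G_{1}(f)=\mu_{X}\circ G(\pi_{1})\circ\ker(G(\pi_{0}))\text{.}
\]
The composite \(G(\pi_{1})\circ\ker(G(\pi_{0}))\) is a morphism \(G_{1}[f]\to G(X)\). So \(G_{1}(f)\) factors through \(\mu_{X}=\ker(\eta_{X})\), which means precisely \(\eta_{X}\circ G_{1}(f)=0\), that is, \(G_{1}[f]\leq G(X)\). Lemma~\ref{Lemma F-Inverted Extensions} then shows that \(F(\rho^{1}_{f})\colon F(X)\to F(F_{1}[f])\) is an isomorphism.

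The displayed equation follows by applying the functor \(F\) to the factorisation \(F_{1}(f)\circ\rho^{1}_{f}=f\), which gives \(F(F_{1}(f))\circ F(\rho^{1}_{f})=F(f)\). Since \(F(\rho^{1}_{f})\) is invertible, this exhibits an isomorphism between \(F(F_{1}(f))\) and \(F(f)\) as objects of \(\Arr(\X)\): use \((F(\rho^{1}_{f}),1_{F(Y)})\), reading \(F(F_{1}(f))\) as the arrow \(F(F_{1}[f])\to F(Y)\) and \(F(f)\) as the arrow \(F(X)\to F(Y)\). Naturality in \(f\) is inherited from the naturality of \(\rho^{1}\) as the unit of \(F_{1}\dashv{\subseteq_{1}}\).

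No step is a serious obstacle. The only point needing care is to identify \(\Ker(\rho^{1}_{f})\) with \(G_{1}[f]\) and to check that \(\rho^{1}_{f}\) is an extension, so that Lemma~\ref{Lemma F-Inverted Extensions} applies. Both facts were established in the discussion of the short exact sequence \(0\to G_{1}[f]\to X\to F_{1}[f]\to 0\).
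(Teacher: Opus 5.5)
Your proposal is correct and follows the paper's proof: you observe that \(\Ker(\rho^{1}_{f})=G_{1}[f]\) lies in \(G(X)\) because \(G_{1}(f)\) factors through \(\mu_{X}=\ker(\eta_{X})\), and conclude with Lemma~\ref{Lemma F-Inverted Extensions}. You then obtain the displayed identity by applying \(F\) to \(F_{1}(f)\circ\rho^{1}_{f}=f\), exactly as the paper does.
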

\begin{proof}
    By the construction of \(F_{1}\), the kernel \(\Ker(\rho^{1}_{f})=G_{1}[f]\) lies in \(G(X)\), since \(G_{1}(f)\) factors through \(\mu_{X}=\ker(\eta_{X})\). Hence \(F(\rho^{1}_{f})\) is an isomorphism (Lemma~\ref{Lemma F-Inverted Extensions}), and applying \(F\) to \(F_{1}(f)\circ \rho^{1}_{f}=f\) yields \(F(F_{1}(f))\circ F(\rho^{1}_{f})=F(f)\).
\end{proof}

\begin{lemma}\label{Lemma Kernel Reflection Centralisation Comparison}
    If \(f\colon{X\to Y}\) is a split epimorphism, then
    \[
        \Ker(F(f))\cong \Ker(F_{1}(f)).
    \]
\end{lemma}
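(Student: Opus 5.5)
The plan is to pass through the centralisation \(F_{1}(f)\colon F_{1}[f]\to Y\) and to use that a split central extension is trivial. Let \(s\colon Y\to X\) be a section of \(f\), so that \(f\circ s=1_{Y}\).

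First I check that \(F_{1}(f)\) is split. The morphism \(\rho^{1}_{f}\circ s\colon Y\to F_{1}[f]\) is a section of \(F_{1}(f)\), because
\[
F_{1}(f)\circ\rho^{1}_{f}\circ s=f\circ s=1_{Y}.
\]
Being a split epimorphism, \(F_{1}(f)\) is an extension. It lies in \(\CExt(\C)\), so it is central. By the second item of Lemma~\ref{split central extension}, a split central extension is trivial. Hence the strong \(\E\)-Birkhoff square of \(F_{1}(f)\),
\[
\xymatrix{F_{1}[f] \ar[d]_-{\eta_{F_{1}[f]}} \ar[r]^-{F_{1}(f)} & Y \ar[d]^-{\eta_{Y}} \\
F(F_{1}[f]) \ar[r]_-{F(F_{1}(f))} & F(Y),}
\]
is a pullback.

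Next I use that in a pointed category, a pullback square induces an isomorphism between the kernels of its parallel arrows. This is an instance of Convention~\ref{Convention Generalised Elements}: a generalised element of \(F_{1}[f]\) killed by \(F_{1}(f)\) is the same thing as a generalised element of \(F(F_{1}[f])\) killed by \(F(F_{1}(f))\), paired with \(0\) in \(Y\). The restriction of \(\eta_{F_{1}[f]}\) therefore gives
\[
\Ker(F_{1}(f))\cong\Ker(F(F_{1}(f))).
\]

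Finally I apply Lemma~\ref{Lemma reflection of centralisation}. It gives \(F(F_{1}(f))\circ F(\rho^{1}_{f})=F(f)\) with \(F(\rho^{1}_{f})\) an isomorphism. So \(F(f)\) and \(F(F_{1}(f))\) differ by an isomorphism of their domains, and \(F(\rho^{1}_{f})\) restricts to an isomorphism
\[
\Ker(F(f))\cong\Ker(F(F_{1}(f))).
\]
Composing this with the previous isomorphism gives \(\Ker(F(f))\cong\Ker(F_{1}(f))\).

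There is no real obstacle here. The only point needing care is the hypothesis check for Lemma~\ref{split central extension}: the splitting must be of \(F_{1}(f)\) itself, not of \(f\). This is handled by the section \(\rho^{1}_{f}\circ s\) constructed in the first step.
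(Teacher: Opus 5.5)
Your proof is correct and follows the paper's own argument: split central implies trivial (Lemma~\ref{split central extension}), so \(\Ker(F_{1}(f))\cong\Ker(F(F_{1}(f)))\), and Lemma~\ref{Lemma reflection of centralisation} identifies the latter with \(\Ker(F(f))\). The one addition is your explicit section \(\rho^{1}_{f}\circ s\) of \(F_{1}(f)\), which the paper leaves implicit when it calls \(F_{1}(f)\) a split epimorphism.
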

\begin{proof}
    The central extension \(F_{1}(f)\), being a split epimorphism, is a trivial extension by Lemma~\ref{split central extension}; hence its kernel coincides with the kernel of~\(F(F_{1}(f))\), which is also the kernel of \(F(f)\) by Lemma~\ref{Lemma reflection of centralisation}.
\end{proof}

\begin{lemma}\label{Lemma Kernel Extension and Kernel Pair Centralisation Comparison}
    If \(f\colon{X\to Y}\) is an extension with kernel pair \((\R{f},\pi_{0},\pi_{1})\), then we have an isomorphism \(\Ker(F_{1}(\pi_{0}))\cong \Ker(F_{1}(f))\). As a consequence, \(F_{1}(\pi_{0})\) is a pullback of \(F_{1}(f)\) along \(f\).
\end{lemma}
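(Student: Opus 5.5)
The plan is to combine the split case, Lemma~\ref{Lemma Kernel Reflection Centralisation Comparison}, with the fact that \(F_{1}\) preserves pullbacks of extensions along extensions. That fact is part of its strong \(\E^{1}\)-Birkhoff property.

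First, consider the kernel pair square
\[
    \xymatrix{\R{f} \ar[d]_-{\pi_{0}} \ar[r]^-{\pi_{1}} & X \ar[d]^-{f} \\
    X \ar[r]_-{f} & Y}
\]
This is a pullback of extensions, hence a double extension. Read it as a morphism \((\pi_{1},f)\colon{\pi_{0}\to f}\) in \(\Ext(\C)\) that lies in \(\E^{1}\). Since \(\CExt(\C)\) is strongly \(\E^{1}\)-Birkhoff in \(\Ext(\C)\) (\cite[Lemma 4.3]{EGVdL}), the square formed by the unit components \((\rho^{1}_{\pi_{0}},1_{X})\) and \((\rho^{1}_{f},1_{Y})\) and by \(F_{1}(\pi_{1},f)\) is a double extension in \(\Ext(\C)\). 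Reading off the relevant face gives a commutative cube in \(\C\). From it we obtain a comparison square
\[
    \xymatrix{F_{1}[\pi_{0}] \ar[d]_-{F_{1}(\pi_{0})} \ar[r] & F_{1}[f] \ar[d]^-{F_{1}(f)} \\
    X \ar[r]_-{f} & Y}
\]
together with the induced morphism \(\kappa\colon\Ker(F_{1}(\pi_{0}))\to\Ker(F_{1}(f))\). By Lemma~\ref{Lemma Kernels Double Extension}, applied to this square (which is a double extension, being a face of a three-dimensional extension), \(\kappa\) is an extension. It remains to show that \(\kappa\) is a monomorphism. The pullback statement then follows: a morphism of extensions over the regular epimorphism \(f\) that is an isomorphism on kernels gives a comparison to the pullback which is an isomorphism, by the Short Five Lemma in the homological category \(\C\).

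Second, prove injectivity using the split structure. The extension \(\pi_{0}\) is split by the diagonal \(\delta\colon X\to\R{f}\). By Lemma~\ref{Lemma Kernel Reflection Centralisation Comparison}, \(\Ker(F_{1}(\pi_{0}))\cong\Ker(F(\pi_{0}))\). Moreover, \(F_{1}(\pi_{0})\) is a split central, hence trivial, extension, so \(\Ker(F_{1}(\pi_{0}))\) is the kernel of \(F(\pi_{0})\). This kernel is the image of \(\Ker(\pi_{0})\cong\Ker(f)\) under \(\eta_{\R{f}}\), i.e.\ \(\Ker(f)/(\Ker(f)\cap G(\R{f}))\) in element terms. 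An alternative and more robust route is to compute directly from the construction. We have \(\Ker(F_{1}(\pi_{0}))=\Ker(\pi_{0})/G_{1}[\pi_{0}]\) and \(\Ker(F_{1}(f))=\Ker(f)/G_{1}[f]\), and under \(\pi_{1}\) the object \(\Ker(\pi_{0})\) is identified with \(\Ker(f)\). So the task reduces to showing \(\pi_{1}(G_{1}[\pi_{0}])=G_{1}[f]\) as subobjects of \(\Ker(f)\), with \(\pi_{1}\) restricted to \(\Ker(\pi_{0})\) an isomorphism. The inclusion \(\subseteq\) comes from naturality of \(G_{1}\). For \(\supseteq\), a generalised element of \(G_{1}[f]\) is a pair \((x,y)\in G(\R{f})\) with \(x\) trivial. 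Then \((0,x,y)\) should lie in \(G(\R{\pi_{0}})\), via the morphism \(\R{f}\to\R{\pi_{0}}\) given by \((a,b)\mapsto((a,a),(a,b))\) and naturality of \(G\).

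The main obstacle is exactly this identification \(G_{1}[\pi_{0}]\cong G_{1}[f]\). I would try it first via the morphism \(\R{f}\to\R{\pi_{0}}\) just described: it sends \(G_{1}[f]\) into \(G_{1}[\pi_{0}]\) compatibly with \(\pi_{1}\), giving the reverse inclusion. Together with the naturality inclusion, \(\kappa\) is then an isomorphism, and the Short Five Lemma argument completes the pullback claim.
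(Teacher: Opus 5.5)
Your proof is correct, but its decisive step is not the one the paper uses.

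\textbf{The paper's route.} The paper never computes the two kernels. It shows that the unit naturality square at \((\pi_{1},f)\colon{\pi_{0}\to f}\), with sides \(\pi_{1}\), \(\rho^{1}_{\pi_{0}}\) and \(\rho^{1}_{f}\), is a pullback. The comparison \(c\) to the pullback is an extension by the strong \(\E^{1}\)-Birkhoff property, and \(\Ker(c)\leq\Ker(\pi_{1})\cap\Ker(\pi_{0})=0\) since \(\Ker(\rho^{1}_{\pi_{0}})\leq\Ker(\pi_{0})\). Restricting to kernels gives a pullback square whose side \(\Ker(f)\to\Ker(F_{1}(f))\) is a regular epimorphism. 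Hence \(\kappa\) is a regular epimorphism, and it is a monomorphism because pulling back along a regular epimorphism reflects monomorphisms.

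\textbf{Your route.} You identify the two kernels with \(\Ker(\pi_{0})/G_{1}[\pi_{0}]\) and \(\Ker(f)/G_{1}[f]\). You should say why this holds: apply Lemma~\ref{Lemma Kernels Double Extension} to the unit squares \((\rho^{1}_{\pi_{0}},1_{X})\) and \((\rho^{1}_{f},1_{Y})\), and note that \(\kappa\) is the induced map by naturality of \(\rho^{1}\). You then show that \(\pi_{1}\) carries \(G_{1}[\pi_{0}]\) onto \(G_{1}[f]\).

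Your reverse inclusion does hold, and it needs no elements. Write \(p_{0}\), \(p_{1}\) for the projections of \(\R{\pi_{0}}\) and \(\phi=(\delta\pi_{0},1_{\R{f}})\colon{\R{f}\to\R{\pi_{0}}}\). The identity \(G(p_{0})G(\phi)=G(\delta)G(\pi_{0})\) yields \(\psi\colon{G_{1}[f]\to G_{1}[\pi_{0}]}\). The identity \(p_{1}\phi=1_{\R{f}}\) then gives \(\pi_{1}\circ G_{1}(\pi_{0})\circ\psi=\pi_{1}\circ\mu_{\R{f}}\circ\ker(G(\pi_{0}))=G_{1}(f)\). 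Your ``\((0,x,y)\)'' should read \(((0,0),(0,y))\).

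Equality of the two kernels already makes \(\kappa\) invertible. So the extension property of \(\kappa\) from your first step is superfluous, and so is the abandoned detour through \(\Ker(F(\pi_{0}))\).

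\textbf{What each buys.} Your argument is more explicit: it shows that the lemma amounts to the relative commutator of \(\pi_{0}\) being a copy of that of \(f\). The paper's argument is independent of how \(F_{1}\) is constructed. It uses only the strong \(\E^{1}\)-Birkhoff property and the inclusion \(\Ker(\rho^{1}_{\pi_{0}})\leq\Ker(\pi_{0})\), which holds for any unit factorisation. That suits its relative reading along the tower, and it also shows that the unit square itself is a pullback.
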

\begin{proof}
    Consider the pullback on the left, viewed as a morphism of extensions from~\(\pi_{0}\) to~\(f\), and its image through the functor \(F_{1}\) on the right.
    \[
        \xymatrix{\R{f} \pullback \ar[d]_-{\pi_{0}} \ar[r]^-{\pi_{1}} & X \ar[d]^-{f} \\
        X \ar[r]_-{f} & Y}
        \qquad \qquad
        \xymatrix{F_{1}[\pi_{0}] \ar[d]_-{F_{1}(\pi_{0})} \ar[r] & F_{1}[f] \ar[d]^-{F_{1}(f)} \\
        X \ar[r]_-{f} & Y}
    \]
    The latter square need no longer be a pullback, but it is still a double extension since \(F_{1}\) preserves double extensions. The comparison
    \[
        \xymatrix{\R{f} \ar[d]_-{\rho^{1}_{\pi_{0}}} \ar[r]^-{\pi_{1}} & X \ar[d]^-{\rho^{1}_{f}} \\
        F_{1}[\pi_{0}] \ar[r] & F_{1}[f] }
    \]
    between them is in fact a pullback. Indeed, the induced morphism
    \[
        c\colon{\R{f}\to X\times_{F_{1}[f]}F_{1}[\pi_{0}]}
    \]
    to the pullback is an extension. To see this, note first that, viewed as a morphism \({\pi_{0}\to f}\) in \(\Ext(\C)\), the square \((\pi_{1},f)\) is a double extension (being a pullback of extensions, so that its comparison morphism is an isomorphism) and hence lies in \(\E^{1}\). Since \(\CExt(\C)\) is a strongly \(\E^{1}\)-Birkhoff subcategory of \(\Ext(\C)\), the square formed by \((\pi_{1},f)\), its image under \(F_{1}\) and the units of the adjunction \(F_{1}\dashv{\subseteq_{1}}\) is a double extension in \(\Ext(\C)\); in particular, its comparison morphism to the pullback of \(F_{1}(\pi_{0})\) and \(f\) over \(F_{1}(f)\) lies in \(\E^{1}\), so that it is a double extension in \(\C\), all four edges of which are extensions. That pullback is computed pointwise~\cite[Proposition 3.5]{EGVdL}, whence the comparison morphism in question is a square in \(\C\) whose domain edge is \(c\). The kernel of \(c\) is \(\Ker(\pi_{1})\cap \Ker(\rho^{1}_{\pi_{0}})\), the pullback projections being jointly monic. Applying the construction of the centralisation functor in Section~\ref{section basic concepts} to the extension \(\pi_{0}\), we find \(\Ker(\rho^{1}_{\pi_{0}})=G_{1}[\pi_{0}]\leq \Ker(\pi_{0})\), so that
    \[
        \Ker(c)=\Ker(\pi_{1})\cap \Ker(\rho^{1}_{\pi_{0}})\leq \Ker(\pi_{1})\cap\Ker(\pi_{0})=0\text{,}
    \]
    the kernel pair projections \(\pi_{0}\) and \(\pi_{1}\) being jointly monic. An extension with trivial kernel being an isomorphism, the morphism \(c\) is invertible, and the comparison square is a pullback. As a consequence---kernels being computed by pullbacks, and limits commuting with limits---the square
    \[
        \xymatrix{\Ker(\pi_{0}) \pullback \ar[d] \ar@{=}[r] & \Ker(f) \ar[d] \\
            \Ker(F_{1}(\pi_{0})) \ar[r] & \Ker(F_{1}(f)) }
    \]
    is also a pullback. The map on the right is a regular epimorphism: the unit square \((\rho^{1}_{f},1_{Y})\colon{f\to F_{1}(f)}\) is a double extension---its comparison morphism to the pullback being \(\rho^{1}_{f}\) itself---so that Lemma~\ref{Lemma Kernels Double Extension} applies. Hence the bottom map \(\Ker(F_{1}(\pi_{0}))\to \Ker(F_{1}(f))\) is a regular epimorphism too. Since pullbacks along regular epimorphisms reflect monomorphisms, it is also a monomorphism. This proves the first claim. For the second, the induced comparison from \(F_{1}[\pi_{0}]\) to the pullback of \(F_{1}(f)\) along \(f\) is a morphism of extensions over \(X\) which restricts to this isomorphism between the kernels; by the Short Five Lemma it is invertible, so that \(F_{1}(\pi_{0})\) is indeed a pullback of \(F_{1}(f)\) along \(f\).
\end{proof}

By symmetry, \(F_{1}(\pi_{1})\) is likewise a pullback of \(F_{1}(f)\) along \(f\).

\begin{lemma}\label{Lemma Simple Galois Groupoid}
    For any extension \(f\colon{X\to Y}\) we have
    \[
        F(\R{F_{1}(f)})\cong F(\R{f})
    \]
    as groupoids in \(\X\).
\end{lemma}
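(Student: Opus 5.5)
We want \(F(\R{F_{1}(f)})\cong F(\R{f})\) as groupoids in \(\X\). The plan is to compare the two kernel pairs through the unit \(\rho^{1}_{f}\colon X\to F_{1}[f]\) and show that \(F\) inverts the comparison at the level of objects, of arrows, and compatibly with the groupoid structure.

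\textbf{Step 1: the comparison morphism.} The unit square \((\rho^{1}_{f},1_{Y})\colon f\to F_{1}(f)\) induces a morphism of kernel pairs
\[
    \R{\rho^{1}_{f}}\colon \R{f}\to\R{F_{1}(f)},
\]
together with \(\rho^{1}_{f}\colon X\to F_{1}[f]\) on objects. These form a morphism of internal groupoids (equivalence relations) in \(\C\), commuting with projections, diagonals and composition. Applying \(F\), which preserves the relevant structure maps, gives a morphism of groupoids in \(\X\). On objects it is \(F(\rho^{1}_{f})\), which is an isomorphism by Lemma~\ref{Lemma reflection of centralisation}.

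\textbf{Step 2: the arrow part.} It remains to show that \(F\) inverts the arrow part. This map is an extension: the unit square is a double extension, and by the characterisation of double extensions recalled before Lemma~\ref{Lemma Kernel Pairs Ext}, the induced map of kernel pairs is a regular epimorphism. By Lemma~\ref{Lemma F-Inverted Extensions} it therefore suffices to prove
\[
    \Ker(\R{\rho^{1}_{f}})\leq G(\R{f}).
\]
Compute with generalised elements (Convention~\ref{Convention Generalised Elements}). Since the kernel pair projections are jointly monic, the kernel is \(\{(x,x')\in\R{f} : x,x'\in G_{1}[f]\}\), that is, \(G_{1}[f]\times G_{1}[f]\) viewed inside \(\R{f}\), using \(G_{1}[f]\leq\Ker(f)\).

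\textbf{Step 3: the expected obstacle.} The main difficulty is showing this subobject lies in \(G(\R{f})\). It is generated, as a product, by the two subobjects \(G_{1}[f]\times 0\) and \(0\times G_{1}[f]\), and in a semi-abelian category a product is the join of these two (normal) subobjects. So it suffices to treat each factor separately.

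Consider \(0\times G_{1}[f]\). By construction \(G_{1}[f]\) is the image under \(\pi_{1}\) of \(\Ker(G(\pi_{0}))\leq G(\R{f})\), and that part of \(G(\R{f})\) lies in \(\Ker(\pi_{0})\). Hence its image in \(\R{f}\) is precisely \(0\times G_{1}[f]\), and in particular \(0\times G_{1}[f]\leq G(\R{f})\).

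For \(G_{1}[f]\times 0\), use the twist isomorphism of \(\R{f}\) exchanging \(\pi_{0}\) and \(\pi_{1}\). It maps \(G(\R{f})\) to itself by functoriality of \(G\), so the symmetric argument applies.

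\textbf{Step 4: conclusion.} Since \(G(\R{f})\) is a subobject containing both factors, it contains their join, and so \(F(\R{\rho^{1}_{f}})\) is an isomorphism. A morphism of groupoids that is invertible on both objects and arrows is an isomorphism of groupoids, which proves the statement.
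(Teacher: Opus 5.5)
Your proof is correct, and it takes a genuinely different route from the paper's.

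The paper works only with the kernels of the split epimorphisms \(F(\pi_{0})\) and \(F(\pi'_{0})\). It identifies both with \(\Ker(F_{1}(f))\) through Lemmas~\ref{Lemma Kernel Reflection Centralisation Comparison} and~\ref{Lemma Kernel Extension and Kernel Pair Centralisation Comparison}. It then shows that the induced comparison \(k\) between them is invertible, by noting that \(\alpha\colon{\Ker(\pi_{0})\to\Ker(F(\pi_{0}))}\) and \(\beta\circ r'\) are both cokernels of \(G_{1}[f]\leq\Ker(f)\). Here \(\beta\) is invertible because \(F_{1}(f)\) is central, so that \(\pi'_{0}\) is trivial. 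The paper concludes with the Short Five Lemma.

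You instead apply Lemma~\ref{Lemma F-Inverted Extensions} to the whole comparison \(r\colon{\R{f}\to\R{F_{1}(f)}}\). Its kernel \(G_{1}[f]\times G_{1}[f]\) is, by protomodularity, the join of \(0\times G_{1}[f]=\Ker(\pi_{0})\cap G(\R{f})\) and its image under the twist. The identification \(\Ker(\pi_{0})\cap G(\R{f})=\Ker(G(\pi_{0}))\cong G_{1}[f]\), built into the construction of \(F_{1}\), is the common core of both arguments.

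What each route buys:
\begin{itemize}
\item Yours is shorter and more elementary. It never uses that \(F_{1}(f)\) is central, nor the strongly \(\E^{1}\)-Birkhoff property on which Lemma~\ref{Lemma Kernel Extension and Kernel Pair Centralisation Comparison} rests, nor the Short Five Lemma. It also shows transparently why \(F\) cannot distinguish the two kernel pairs.
\item The paper's route, in exchange, exhibits the kernels of the projections of the Galois groupoid explicitly as \(\Ker(F_{1}(f))\). It does so by means of the very lemmas whose level-\(n\) forms are needed again in the Key Lemma.
\end{itemize}

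One minor point: the characterisation of double extensions recalled before Lemma~\ref{Lemma Kernel Pairs Ext} concerns the kernel pairs of the \emph{horizontal} arrows, so you must read the unit square transposed. Alternatively, \(r=\rho^{1}_{f}\times_{Y}\rho^{1}_{f}\) is a composite of two pullbacks of the extension \(\rho^{1}_{f}\), hence an extension.
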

\begin{proof}
    Write \(\pi'_{0}\), \(\pi'_{1}\colon{\R{F_{1}(f)}\to F_{1}[f]}\) for the kernel pair projections of the centralisation \(F_{1}(f)\), and \(r=\rho^{1}_{f}\times_{Y}\rho^{1}_{f}\colon{\R{f}\to \R{F_{1}(f)}}\) for the comparison induced by \(\rho^{1}_{f}\). Reflecting into \(\X\) and taking kernels of \(F(\pi_{0})\) and \(F(\pi'_{0})\) to the left, we obtain the diagram
    \[
        \xymatrix{\Ker(F(\pi_{0})) \ar[d]_-{k} \ar[r] & F(\R{f}) \ar@<.5ex>[r]^-{F(\pi_{0})} \ar[d]_-{F(r)} \ar@<-.5ex>[r]_-{F(\pi_{1})} & F(X) \ar[d]_-{\cong}^-{F(\rho^{1}_{f})} \\
        \Ker(F(\pi'_{0})) \ar[r] & F(\R{F_{1}(f)}) \ar@<.5ex>[r]^-{F(\pi'_{0})} \ar@<-.5ex>[r]_-{F(\pi'_{1})} & F(F_{1}[f]). }
    \]
    From Lemma~\ref{Lemma reflection of centralisation} we already know that \(F(\rho_{f}^{1})\) is an isomorphism.

    Lemma~\ref{Lemma Kernel Reflection Centralisation Comparison} tells us that the kernel of \(F(\pi_{0})\)---the projection \(\pi_{0}\) being split by the diagonal of \(\R{f}\)---may be obtained as the kernel of the centralisation \(F_{1}(\pi_{0})\). From Lemma~\ref{Lemma Kernel Extension and Kernel Pair Centralisation Comparison}, in turn, we know that this kernel is precisely the kernel of \(F_{1}(f)\). This, however, is also the kernel of \(\pi'_{0}\), which coincides with the kernel of \(F(\pi'_{0})\) since \(F_{1}(f)\) is a central extension, so that \(\pi'_{0}\) is a trivial extension.

    Both kernels in the diagram are thus copies of \(\Ker(F_{1}(f))\); it remains to be checked that the comparison \(k\) between them is itself invertible. The strong \(\E\)-Birkhoff squares of \(\pi_{0}\) and of \(\pi'_{0}\) are double extensions, so that by Lemma~\ref{Lemma Kernels Double Extension} the units restrict to extensions
    \[
        \alpha\colon{\Ker(\pi_{0})\to \Ker(F(\pi_{0}))}\qquad\text{and}\qquad \beta\colon{\Ker(\pi'_{0})\to \Ker(F(\pi'_{0}))}\text{,}
    \]
    the latter an isomorphism because \(\pi'_{0}\) is a trivial extension, so that the naturality square of \(\eta\) at \(\pi'_{0}\) is a pullback. Writing \(r'\colon{\Ker(\pi_{0})\to \Ker(\pi'_{0})}\) for the restriction of \(r\), the naturality of \(\eta\) gives \(k\circ \alpha=\beta\circ r'\). We now identify \(\Ker(\pi_{0})\) with \(\Ker(f)\) along \(\pi_{1}\). On the one hand, \(\alpha\) being a restriction of \(\eta_{\R{f}}\),
    \[
        \Ker(\alpha)=\Ker(\pi_{0})\cap G(\R{f})=\Ker(G(\pi_{0}))=G_{1}[f]
    \]
    by the construction of the centralisation functor, the left-hand square in the diagram defining \(G_{1}[f]\) being a pullback. On the other hand, \(r'\) becomes the restriction \(\Ker(f)\to \Ker(F_{1}(f))\) of \(\rho^{1}_{f}\), which is a regular epimorphism by Lemma~\ref{Lemma Kernels Double Extension} applied to the unit square \((\rho^{1}_{f},1_{Y})\colon{f\to F_{1}(f)}\), and whose kernel is \(\Ker(f)\cap\Ker(\rho^{1}_{f})=G_{1}[f]\) as well. Hence \(\alpha\) and \(\beta\circ r'\) are regular epimorphisms with the same kernel: each is a cokernel of the inclusion \(G_{1}[f]\leq \Ker(f)\), and the induced comparison \(k\) is an isomorphism.

    Since \(F\) preserves extensions, both rows are short exact sequences, and the Short Five Lemma now tells us that also \(F(r)\) is an isomorphism.
\end{proof}

\begin{remark}\label{Remark No Centralisation}
    Lemma~\ref{Lemma Simple Galois Groupoid} says that \(F\) does not distinguish the kernel pair of an extension from that of its centralisation: there is no need to centralise in order to obtain the Galois groupoid, and hence the fundamental group. This is in essence Theorem~2.17 of~\cite{Bourn-Rodelo2}, proved there by means of the comprehensive factorization.
\end{remark}

\begin{definition}
    An object \(P\) in \(\C\) is \defn{projective} if for every regular epimorphism \(f\colon{X\to Y}\) and every morphism \(p\colon{P\to Y}\) there exists a morphism \(p'\colon{P\to X}\) such that \(f\circ p'=p\). \(\C\) is said to have \defn{enough projectives} if for every object \(Y\) there exists a projective object \(P\) and a regular epimorphism \(P\to Y\).
\end{definition}

Suppose \(\C\) has enough projectives. A \defn{weakly universal central extension} of an object \(Y\) in \(\C\) is a central extension \(f\colon{X\to Y}\) such that for every central extension \(f'\colon{X'\to Y}\) there exists a morphism \(g\colon{X\to X'}\) with \(f'\circ g=f\). Given any extension \(f\colon{X\to Y}\), its \defn{Galois groupoid} is the internal groupoid
\[
    F(\R{f})\two F(X)
\]
the image under \(F\) of the kernel pair \((\pi_{0},\pi_{1})\) of \(f\)---a groupoid indeed, by Proposition~\ref{Birkhoff Reflector Nerve} below; its \defn{Galois group} is the intersection \(\Ker(F(\pi_{0}))\cap \Ker(F(\pi_{1}))\) of the kernels of the two projections of this groupoid (cf.~\cite{Janelidze:Hopf})---equivalently, the kernel of \((F(\pi_{0}),F(\pi_{1}))\colon{F(\R{f})\to F(X)\times F(X)}\), which measures the failure of~\(F\) to preserve the kernel pair of \(f\). The \defn{fundamental group} \(\pi_{1}^{F}(Y)\) of \(Y\) is the Galois group of a weakly universal central extension of \(Y\). This does not depend on the chosen weakly universal central extension: any two of them admit comparison morphisms over \(Y\) in both directions, which exhibit their kernel pairs as equivalent internal groupoids and so induce an isomorphism between the corresponding Galois groups~\cite{Janelidze:Hopf}. Finally, when \(f\colon{P\to Y}\) is an extension with projective domain, its centralisation \(F_{1}(f)\) is a weakly universal central extension of~\(Y\) \cite[Theorem~5.1]{Janelidze:Hopf}; consequently \(\pi_{1}^{F}(Y)\) may be computed as the intersection of the kernels of the two projections of the Galois groupoid \(F(\R{F_{1}(f)})\two F(F_{1}[f])\).

\begin{theorem}[cf.~{\cite[Theorem~2.17]{Bourn-Rodelo2}}]\label{Theorem Interpretation Fundamental Group}
    For any object \(Y\) in a semi-abelian category with enough projectives~\(\C\), its fundamental group \(\pi_{1}^{F}(Y)\) with respect to a Birkhoff reflector \(F\colon \C\to \X\) may be computed as the intersection of the kernels of the two projections in the groupoid \(F(\R{f})\two F(X)\), where \(f\colon{X\to Y}\) is a projective presentation of \(Y\).
\end{theorem}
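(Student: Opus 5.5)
The argument combines the discussion immediately preceding the statement with Lemma~\ref{Lemma Simple Galois Groupoid}. Let \(f\colon{X\to Y}\) be a projective presentation of \(Y\), so that \(X\) is projective and \(f\) is a regular epimorphism. By \cite[Theorem~5.1]{Janelidze:Hopf}, as recalled above, the centralisation \(F_{1}(f)\colon{F_{1}[f]\to Y}\) is a weakly universal central extension of \(Y\). By definition, \(\pi_{1}^{F}(Y)\) is therefore the Galois group of \(F_{1}(f)\): the intersection \(\Ker(F(\pi'_{0}))\cap\Ker(F(\pi'_{1}))\), where \(\pi'_{0},\pi'_{1}\) are the kernel pair projections of \(F_{1}(f)\). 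This value does not depend on the chosen weakly universal central extension, which was also recalled above.

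Next I apply Lemma~\ref{Lemma Simple Galois Groupoid}. It gives an isomorphism of groupoids in \(\X\) between \(F(\R{f})\two F(X)\) and \(F(\R{F_{1}(f)})\two F(F_{1}[f])\). Its object part is \(F(\rho^{1}_{f})\) and its arrow part is \(F(r)\), where \(r=\rho^{1}_{f}\times_{Y}\rho^{1}_{f}\). The one thing to check is that this is an isomorphism of reflexive graphs, so that it commutes with both projections. This follows from \(\pi'_{i}\circ r=\rho^{1}_{f}\circ\pi_{i}\) for \(i\in\{0,1\}\) and functoriality of \(F\), giving
\[
F(\pi'_{i})\circ F(r)=F(\rho^{1}_{f})\circ F(\pi_{i})\text{.}
\]

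It remains to transport kernels along these isomorphisms. Since \(F(\rho^{1}_{f})\) is invertible, the displayed equation shows that \(F(r)\) maps \(\Ker(F(\pi_{i}))\) isomorphically onto \(\Ker(F(\pi'_{i}))\) for each \(i\). An isomorphism preserves intersections of subobjects, as these are pullbacks, so \(F(r)\) restricts to an isomorphism
\[
\Ker(F(\pi_{0}))\cap\Ker(F(\pi_{1}))\cong\Ker(F(\pi'_{0}))\cap\Ker(F(\pi'_{1}))\cong\pi_{1}^{F}(Y)\text{.}
\]
Equivalently, \(F(r)\) restricts to an isomorphism between the kernels of \((F(\pi_{0}),F(\pi_{1}))\) and \((F(\pi'_{0}),F(\pi'_{1}))\), because \(F(\rho^{1}_{f})\times F(\rho^{1}_{f})\) is invertible.

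No step here is a real obstacle, since the substantive work was done in Lemma~\ref{Lemma Simple Galois Groupoid}. The only point needing care is to confirm that the isomorphism produced there is compatible with both projections, and not merely with \(\pi_{0}\). The proof of that lemma only displays the short exact sequences built on \(F(\pi_{0})\). However, \(F(r)\) itself was shown to be invertible, and the naturality identity above for \(\pi_{1}\) holds automatically, so this point is routine.
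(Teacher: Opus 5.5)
Your proposal is correct and follows essentially the same route as the paper. It identifies \(\pi_{1}^{F}(Y)\) with the Galois group of the weakly universal central extension \(F_{1}(f)\), then transports that group along the isomorphism of internal groupoids with components \(F(r)\) (Lemma~\ref{Lemma Simple Galois Groupoid}) and \(F(\rho^{1}_{f})\) (Lemma~\ref{Lemma reflection of centralisation}); your explicit check that this comparison commutes with both projections just spells out what the paper leaves implicit in calling it an isomorphism of groupoids.
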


We include a proof nonetheless: it is this explicit, centralisation-free form that the induction of Section~\ref{section induction} carries up the tower of Galois structures.

\begin{proof}
    As observed just above, \(\pi_{1}^{F}(Y)\) is the intersection of the kernels of the two projections of the Galois groupoid \(F(\R{F_{1}(f)})\two F(F_{1}[f])\) of the centralisation of~\(f\). The comparison \((r,\rho^{1}_{f})\colon{(\R{f}\two X)\to(\R{F_{1}(f)}\two F_{1}[f])}\) is inverted by \(F\) in both components: \(r\) by Lemma~\ref{Lemma Simple Galois Groupoid}, and \(\rho^{1}_{f}\) by Lemma~\ref{Lemma reflection of centralisation}. It thus induces an isomorphism of internal groupoids
    \[
        F(\R{f})\two F(X)\quad\cong\quad F(\R{F_{1}(f)})\two F(F_{1}[f])
    \]
    under which \(\pi_{1}^{F}(Y)\) is identified with the intersection of the kernels of the two projections of \(F(\R{f})\two F(X)\).
\end{proof}

\section{The reflection of a nerve}\label{section reflection nerve}

The nerve \(\Nerve(C)\) of an internal category \(C=(C_{1},C_{0},d_1,d_0,s_0)\)
\[
    \xymatrix{C_{1}\ar@<1ex>[r]^-{d_1} \ar@<-1ex>[r]_-{d_0} & C_{0} \ar[l]|-{s_0}}
\]
is computed by successive pullbacks of the split epimorphisms \(d_1\) and \(d_0\). It is the simplicial object defined by \(\Nerve(C)_{0}=C_{0}\), \(\Nerve(C)_{1}=C_{1}\), \(\Nerve(C)_{2}=C_{1}\times_{(d_0,d_1)}C_{1}\) and
\[
    \Nerve(C)_{n}=\underbrace{C_{1}\times_{(d_0,d_1)}C_{1}\times_{(d_0,d_1)}\cdots\times_{(d_0,d_1)}C_{1}}_{\text{\(n\) factors}}
\]
for \(n\geq 3\), with the relevant face and degeneracy operators. Throughout, we write \(\del_{i}\) and \(s_{i}\) for the faces and degeneracies of a simplicial object, keeping the letters \(d_{0}\), \(d_{1}\) and \(s_{0}\) for the structure of a reflexive graph; in degree one the two agree, so that \(\del_{0}=d_{0}\) and \(\del_{1}=d_{1}\). More explicitly, we can define \(\Nerve(C)\) via the sequence of pullbacks
\[
    \xymatrix{\cdots \ar[r] & C_3 \pullback \ar[r]^-{\del_0} \ar[d]_-{\del_3} & C_2 \pullback \ar[r]^-{\del_0} \ar[d]_-{\del_2} & C_1 \ar[d]_-{\del_1}\\
    \cdots \ar[r] & C_2 \ar[r]^-{\del_0} & C_1 \ar[r]^-{\del_0} & C_0}
\]
where \(C_n\coloneq \Nerve(C)_{n}\). Then \(\del_1\colon C_2\to C_1\) is the composition in \(C\), and for every \(n\geq 3\) and every \(0<i<n\) the face operator \(\del_i\colon C_n\to C_{n-1}\) is defined via the universal property of the pullback as the unique morphism such that \(\del_0\del_i=\del_{i-1}\del_0\) and \(\del_{n-1}\del_i=\del_i\del_n\); the degeneracy operators are defined similarly for every \(n\geq 2\) and every \(0\leq i<n\) as the unique morphism \(s_i\colon C_{n-1}\to C_n\) such that \(\del_0s_i=s_{i-1}\del_0\) and \(\del_ns_i=s_i\del_{n-1}\), where for \(i=0\) the first equation is replaced by \(\del_0s_0=1_{C_{n-1}}\), and for \(i=n-1\) the second by \(\del_ns_{n-1}=1_{C_{n-1}}\). This defines a functor \(\Nerve\colon{\Cat(\C)\to \s(\C)}\) on the category \(\Cat(\C)\) of internal categories in \(\C\). Since \(\C\) is a Mal'tsev category, every internal category in \(\C\) is automatically an internal groupoid~\cite{Carboni-Pedicchio-Pirovano}, so that \(\Cat(\C)=\Grpd(\C)\), the category of internal groupoids in \(\C\); we use the latter notation from now on. The image of the nerve functor can be characterised as follows.

Recall that the \defn{Moore complex} of a simplicial object \(S\) in a pointed category with finite limits is the chain complex \(N(S)\) with \(N_{0}S=S_{0}\),
\[
    N_{n}S=\bigcap_{i=0}^{n-1}\Ker(\del_{i}\colon S_{n}\to S_{n-1})
\]
and differential the morphism \(N_{n}S\to N_{n-1}S\) induced by \(\del_{n}\). When the surrounding category is homological, the \defn{homology objects} \(H_{n}S\) of \(S\) are those of its Moore complex; see~\cite{VdLinden:Doc, EverVdL2} for details.

\begin{definition}[\cite{Tierney-Vogel}]\label{Definition Simplicial Kernel}
    A \defn{simplicial kernel} of a sequence of morphisms \(f_0\), \dots, \(f_n\colon X\to Y\) is a sequence of morphisms \(k_0,\ldots,k_{n+1}\colon K\to X\) such that \(f_ik_j=f_{j-1}k_i\) for all \(0\leq i<j\leq n+1\), and which is universal with respect to this property.
\end{definition}

Given a simplicial object \(S\) and \(n\geq 1\), we write \(K_{n}S\) for the simplicial kernel of the sequence of faces \(\del_{0},\dots,\del_{n-1}\colon{S_{n-1}\to S_{n-2}}\) (where for \(n=1\) we take the empty sequence, so that \(K_{1}S=S_{0}\times S_{0}\)) and \(\sigma_{n}\colon{S_{n}\to K_{n}S}\) for the induced comparison morphism, whose components are the faces of \(S\). Dropping one of the universal morphisms \(k_{0}\), \dots, \(k_{n}\) of \(K_{n}S\), say \(k_{l}\), together with all relations involving it, yields the \defn{horn object} \(\Lambda^{n}_{l}S\). The simplicial object \(S\) is \defn{Kan} when for each \(n\) and \(l\) the induced comparison \(S_{n}\to \Lambda^{n}_{l}S\) is a regular epimorphism. In a regular Mal'tsev category, every simplicial object is Kan~\cite{Carboni-Kelly-Pedicchio}.

The next fundamental result is likely known---in any left exact category, Bourn characterises the nerves of \(n\)-groupoids by a simplicial-kernel condition~\cite{Bourn:Nerve}---but we couldn't locate a precise reference for its Moore-complex form, so we decided to include a full proof.

\begin{proposition}[cf.~\cite{Bourn:Dold-Kan,Bourn:Nerve}]\label{Proposition Characterisation Nerve}
    In a homological category, a simplicial object \(S\) is the nerve of an internal groupoid if and only if the Moore complex of \(S\) is trivial in degrees greater than or equal to \(2\).
\end{proposition}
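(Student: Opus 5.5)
The plan is to compare \(S\) degreewise with the nerve of its truncation, using the simplicial kernels \(K_{n}S\) and the comparisons \(\sigma_{n}\colon S_{n}\to K_{n}S\). The key auxiliary fact, valid in any homological category, is the following: for \(n\geq 2\), the kernel of \(\sigma_{n}\) is \(\bigcap_{i=0}^{n}\Ker(\del_{i})\), and this is exactly the kernel of the Moore differential \(N_{n}S\to N_{n-1}S\). Moreover, the comparison from \(S_{n}\) to the horn \(\Lambda^{n}_{n}S\) is a regular epimorphism, since \(S\) is Kan (a homological category being regular Mal'tsev), and its kernel is \(N_{n}S\).

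\emph{Necessity.} Let \(S=\Nerve(C)\). In degree \(n\geq 2\), \(S_{n}\) is the iterated pullback of copies of \(C_{1}\). The faces \(\del_{0},\dots,\del_{n-1}\) jointly determine the horn \(\Lambda^{n}_{n}\), and the nerve is characterised by unique horn filling: \(S_{n}\to\Lambda^{n}_{n}S\) is an isomorphism for \(n\geq 2\). This uses that \(C\) is a groupoid, and all internal categories are groupoids by~\cite{Carboni-Pedicchio-Pirovano}. Concretely, a generalised element of \(S_{n}\) is a composable string \((x_{1},\dots,x_{n})\). The faces \(\del_{0},\dots,\del_{n-1}\) omit the first arrow or compose adjacent ones, so if they all vanish then \(x_{2}=\dots=x_{n}=0\), and \(x_{1}\cdot x_{2}=0\) forces \(x_{1}=0\). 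I will argue this with generalised elements as in Convention~\ref{Convention Generalised Elements}. Hence \(N_{n}S=0\) for \(n\geq 2\).

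\emph{Sufficiency.} Suppose \(N_{n}S=0\) for all \(n\geq 2\). Then for \(n\geq 2\) the regular epimorphism \(S_{n}\to\Lambda^{n}_{n}S\) has zero kernel, hence is an isomorphism. Consider the reflexive graph \((S_{1},S_{0},\del_{1},\del_{0},s_{0})\).
\begin{itemize}
\item Since \(\Lambda^{2}_{2}S\cong S_{1}\times_{S_{0}}S_{1}\), the isomorphism \(S_{2}\cong\Lambda^{2}_{2}S\) turns \(\del_{1}\colon S_{2}\to S_{1}\) into a multiplication. The simplicial identities give the unit laws.
\item Associativity follows from \(S_{3}\cong\Lambda^{3}_{3}S\): a triple of composable arrows determines a unique \(3\)-simplex, and the simplicial identity \(\del_{1}\del_{1}=\del_{1}\del_{2}\) on it yields associativity.
\end{itemize}
This gives an internal category \(C\), automatically a groupoid. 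Finally, I compare \(S\) with \(\Nerve(C)\) by induction on degree: there is a canonical simplicial map \(S\to\Nerve(C)\) which is the identity in degrees \(0\) and \(1\). In degree \(n\), both objects are isomorphic to the same horn object built from lower degrees, since \(\Nerve(C)\) also satisfies the necessity condition. The five lemma or plain uniqueness of horn filling then shows the map is an isomorphism, so \(S\cong\Nerve(C)\).

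The main obstacle is the identification \(\Ker(S_{n}\to\Lambda^{n}_{n}S)=N_{n}S\), together with the correct indexing of which horn corresponds to the faces \(\del_{0},\dots,\del_{n-1}\). The second delicate point is checking that the inductive comparison really is a simplicial map compatible with all faces and degeneracies. I would handle both with generalised elements, reducing to finite-limit statements about the pullback description of \(\Nerve(C)_{n}\).
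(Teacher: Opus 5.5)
\textbf{There is a genuine gap in degree two, where the multiplication is supposed to come from.} Your necessity argument is the paper's. For \(n\ge 3\), working with the horns \(\Lambda^n_nS\) instead of the paper's simplicial kernels \(K_nS\) is a legitimate variant.

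With your convention that \(\Lambda^n_nS\) records the faces \(\del_0,\dots,\del_{n-1}\), the simplicial-kernel relations make \(\Lambda^2_2S\) the object of pairs \((k_0,k_1)\) with \(\del_0k_0=\del_0k_1\). That is the kernel pair \(\Eq(\del_0)\), and the isomorphism \(S_2\cong\Lambda^2_2S\) is \(\varphi=(\del_0,\del_1)\). Under it, \(\del_1\) is just the second projection of \(\Eq(\del_0)\), not a multiplication.

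The object of composable pairs \(S_1\times_{(\del_1,\del_0)}S_1\) is the \emph{inner} horn \(\Lambda^2_1S\), with comparison \(\psi=(\del_0,\del_2)\). The Kan property makes \(\psi\) a regular epimorphism. However, its kernel is \(\Ker(\del_0)\cap\Ker(\del_2)\), which is not \(N_2S\), so your ``zero kernel'' argument does not apply. Without \(\psi\) invertible you have no multiplication \(m=\del_1\psi^{-1}\). You also lack a degree-two component for your ``canonical simplicial map'' \(S\to\Nerve(C)\): being the identity in degrees \(0,1\) and compatible with \(\del_0,\del_2\), that component must be \(\psi\).

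The missing idea is how to pass from \(\Lambda^2_2\) to \(\Lambda^2_1\). The paper does it with a degeneracy. If \(\del_0x=0=\del_2x\), then \(\varphi(s_1\del_1x)=(s_0\del_0\del_1x,\del_1x)=(s_0\del_0\del_0x,\del_1x)=(0,\del_1x)=\varphi(x)\), so \(x=s_1\del_1x\). Hence \(\del_1x=\del_2s_1\del_1x=\del_2x=0\), and \(x\in N_2S=0\).

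Once \(\psi\) is invertible, the rest of your plan goes through:
\begin{itemize}
\item Associativity needs no separate argument, since in a Mal'tsev category every multiplicative reflexive graph is already an internal groupoid~\cite{Carboni-Pedicchio-Pirovano}. If you keep your \(\Lambda^3_3\) argument, note that it also relies on \(\psi\) to identify the faces of the filler with composable pairs.
\item The map \(S\to\Nerve(C)\) exists because the nerve is \(2\)-coskeletal. This is what the paper's isomorphisms \(\sigma_n\colon\Nerve(C)_n\to K_n\Nerve(C)\), \(n\ge3\), express, and you should say it explicitly.
\item Your comparison of the horn maps \(S_n\to\Lambda^n_nS\) and \(\Nerve(C)_n\to\Lambda^n_n\Nerve(C)\) then shows the map is invertible degree by degree.
\end{itemize}
Compared with the paper, this route replaces the explicit check of compatibility with all faces and degeneracies (via \(K_nS\cong\Lambda^n_lS\)) by the coskeletality of the nerve.
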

\begin{proof}
    First consider \(S=\Nerve(C)\) for an internal groupoid \(C\), and take \(n\geq 2\). We argue with generalised elements: applying a representable functor \(\C(T,-)\), which preserves all the limits involved, reduces the claim to the case of a pointed set-theoretical groupoid. An \(n\)-simplex is then a string \(x\coloneq(f_{1},\dots,f_{n})\) of composable arrows, with \(\del_{0}(f_{1},\dots,f_{n})=(f_{2},\dots,f_{n})\) and \(\del_{1}(f_{1},\dots,f_{n})=(f_{2}\circ f_{1},f_{3},\dots,f_{n})\), and the zero simplex is the string each of whose arrows is the identity at the base point. If \(\del_{0}x=0\), then \(f_{2}=\cdots=f_{n}=0\); if moreover \(\del_{1}x=0\), then also \(f_{1}=f_{2}\circ f_{1}=0\). Hence already \(\Ker(\del_{0})\cap\Ker(\del_{1})=0\), so that \(N_{n}S=0\).

    Conversely, suppose that \(N_{n}S=0\) for all \(n\geq 2\). Since \(\C\) is homological and \(\Ker(\del_{0})\cap\Ker(\del_{1})=N_{2}S=0\), the pair \((\del_{0},\del_{1})\colon{S_{2}\to S_{1}}\) is jointly monic; by \(\del_{0}\del_{1}=\del_{0}\del_{0}\) it induces a monomorphism \(\varphi\coloneq(\del_{0},\del_{1})\colon{S_{2}\to \Eq(\del_{0}\colon S_{1}\to S_{0})}\), which is a regular epimorphism by the Kan property, hence an isomorphism.

    Likewise, the comparison \(\psi\coloneq(\del_{0},\del_{2})\colon{S_{2}\to S_{1}\times_{(\del_{1},\del_{0})}S_{1}}\) to the object of composable pairs (that is, \((a_{0},a_{2})\) with \(\del_{1}(a_{0})=\del_{0}(a_{2})\); the comparison is well defined by the simplicial identity \(\del_{0}\del_{2}=\del_{1}\del_{0}\)) is an isomorphism. It is a regular epimorphism by the Kan property. To see that it is a monomorphism, let \(x\) be a generalised element of \(S_{2}\) such that \(\del_{0}x=0=\del_{2}x\). Then
    \[
        \varphi(s_{1}\del_{1}x)=(s_{0}\del_{0}\del_{1}x,\del_{1}x)=(s_{0}\del_{0}\del_{0}x,\del_{1}x)=(0,\del_{1}x)=\varphi(x)\text{,}
    \]
    so that \(x=s_{1}\del_{1}x\) and hence \(\del_{1}x=\del_{2}s_{1}\del_{1}x=\del_{2}x=0\); thus \(x\in N_{2}S=0\).

    Now \(m\coloneq \del_{1}\circ\psi^{-1}\) is a multiplication on the reflexive graph \((S_{1},S_{0},\del_{0},\del_{1},s_{0})\): the computations \(\psi(s_{0}a)=(a,s_{0}\del_{1}a)\) and \(\psi(s_{1}a)=(s_{0}\del_{0}a,a)\) yield the unit axioms \(m(a,s_{0}\del_{1}a)=\del_{1}s_{0}a=a\) and \(m(s_{0}\del_{0}a,a)=\del_{1}s_{1}a=a\). In a Mal'tsev category, every multiplicative reflexive graph is an internal groupoid~\cite{Carboni-Pedicchio-Pirovano}, so \(S_{1}\two S_{0}\) carries a (necessarily unique) internal groupoid structure. It remains to identify \(S\) with the nerve \(T\coloneq\Nerve(S_{1}\two S_{0})\) of this groupoid.

    We first show that for every \(n\geq 3\), the comparison \(\sigma_{n}\colon{S_{n}\to K_{n}S}\) is an isomorphism. Its kernel \(\bigcap_{i=0}^{n}\Ker(\del_{i})\) lies in \(N_{n}S=0\), so it is a monomorphism. Now fix \(0\leq l\leq n\) and consider the factorisation \(S_{n}\to K_{n}S\to \Lambda^{n}_{l}S\) of the comparison to the horn object. The forgetful morphism \(K_{n}S\to\Lambda^{n}_{l}S\) is a monomorphism: by the simplicial kernel relations, every face of the omitted component \(k_{l}\) is determined by the remaining components, and the faces of \(S_{n-1}\) are jointly monic since their joint kernel lies in \(N_{n-1}S=0\). The comparison \(S_{n}\to\Lambda^{n}_{l}S\) being a regular epimorphism by the Kan property, it follows that \(K_{n}S\to \Lambda^{n}_{l}S\) is an isomorphism and that \(\sigma_{n}\) is a regular epimorphism, hence an isomorphism.

    By the first part of this proof, \(T\) also has trivial Moore complex in degrees \(\geq 2\), so the comparisons \(\sigma_{n}\colon{T_{n}\to K_{n}T}\) are isomorphisms for \(n\geq 3\) as well.

    Finally, the \(2\)-truncations of \(S\) and \(T\) are isomorphic: take identities in degrees \(0\) and \(1\), and \(\psi\colon{S_{2}\to T_{2}}\) in degree \(2\). This is compatible with all faces and degeneracies: writing \(\del^{T}_{i}\) for the faces of \(T\), the equalities \(\del^{T}_{0}\psi=\del_{0}\) and \(\del^{T}_{2}\psi=\del_{2}\) hold by the definition of \(\psi\), \(\del^{T}_{1}\psi=m\psi=\del_{1}\) by the definition of \(m\), and the compatibilities with \(s_{0}\) and \(s_{1}\) are precisely the computations \(\psi(s_{0}a)=(a,s_{0}\del_{1}a)\) and \(\psi(s_{1}a)=(s_{0}\del_{0}a,a)\) above. By induction on \(n\geq 3\), this isomorphism of truncations extends degree by degree to an isomorphism \(S\cong T\) of simplicial objects: the isomorphisms below degree \(n\) induce an isomorphism \(K_{n}S\cong K_{n}T\) of simplicial kernels, which composed with the isomorphisms \(\sigma_{n}\) yields an isomorphism \(S_{n}\cong T_{n}\); this commutes with the faces, which are the simplicial kernel components, and then also with the degeneracies, which are uniquely determined by the simplicial identities since the simplicial kernel components are jointly monic.
\end{proof}

Later on we shall need

\begin{lemma}\label{Lemma H_{1} Zero Means Relation}
    If \(C\) is an internal category, then it is an internal equivalence relation if and only if \(H_{1}C=H_{1}\Nerve(C)=0\).
\end{lemma}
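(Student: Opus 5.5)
We compute \(H_{1}\) of the nerve directly from its Moore complex in low degrees. By definition, \(N_{1}\Nerve(C)=\Ker(d_{0})\), with differential induced by \(d_{1}\). The object \(N_{2}\Nerve(C)=\Ker(\del_{0})\cap\Ker(\del_{1})\) vanishes by the first part of the proof of Proposition~\ref{Proposition Characterisation Nerve}. Hence
\[
    H_{1}\Nerve(C)=\Ker\bigl(d_{1}\circ\ker(d_{0})\colon{\Ker(d_{0})\to C_{0}}\bigr)=\Ker(d_{0})\cap\Ker(d_{1}),
\]
with no image to divide out. (In a homological category, homology in degree \(1\) is the cokernel of \(N_{2}\to Z_{1}\), and this map is zero.) The statement therefore reduces to the following: \(C\) is an equivalence relation if and only if \(\Ker(d_{0})\cap\Ker(d_{1})=0\).

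For the forward direction, suppose \(C\) is an internal equivalence relation. Then \((d_{0},d_{1})\colon{C_{1}\to C_{0}\times C_{0}}\) is a monomorphism, so its kernel, which is \(\Ker(d_{0})\cap\Ker(d_{1})\), is zero.

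For the converse, suppose \(\Ker(d_{0})\cap\Ker(d_{1})=0\). The category being homological, a morphism with trivial kernel is a monomorphism, so \((d_{0},d_{1})\) is a monomorphism. This means \(C_{1}\two C_{0}\) is a relation. It is reflexive via \(s_{0}\). Since \(C\) is a groupoid (\(\Cat(\C)=\Grpd(\C)\) in the Mal'tsev setting), the relation is also symmetric and transitive; alternatively, any reflexive relation in a Mal'tsev category is an equivalence relation. Hence \(C\) is an internal equivalence relation, that is, the internal groupoid determined by it.

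The only point needing care is the identification of \(H_{1}\) with \(\Ker(d_{0})\cap\Ker(d_{1})\), which relies on the vanishing of \(N_{2}\). It could also be checked directly with generalised elements, reducing to a pointed set-theoretical groupoid as in the proof of Proposition~\ref{Proposition Characterisation Nerve}: there, a \(2\)-simplex \((f_{1},f_{2})\) with \(f_{2}=0\) and \(f_{2}\circ f_{1}=0\) is zero.
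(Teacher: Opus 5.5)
Your proof is correct and follows the same route as the paper, whose one-line argument is that \((d_1,d_0)\colon{C_1\to C_0\times C_0}\) is a monomorphism exactly when its kernel \(H_1\Nerve(C)=\Ker(d_0)\cap\Ker(d_1)\) vanishes. You make explicit the two points the paper leaves tacit. First, \(N_2\Nerve(C)=0\), so nothing is divided out in degree one. Second, in the Mal'tsev setting a reflexive graph with jointly monic legs is an equivalence relation.
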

\begin{proof}
    The morphism \((d_1,d_0)\colon {C_{1}\to C_{0}\times C_{0}}\) is a monomorphism if and only if its kernel \(H_{1}\Nerve(C)\) is zero.
\end{proof}

\(\Grpd(\C)\) is a full subcategory of the functor category \(\RG(\C)\) of reflexive graphs in \(\C\), closed under subobjects and regular quotients~\cite[Section~3]{Gran:Central-Extensions}; hence limits and regular epimorphisms are computed levelwise in \(\Grpd(\C)\) as they are in \(\RG(\C)\). In particular, a morphism of internal groupoids is a regular epimorphism precisely when both components are extensions in \(\C\).

\begin{lemma}\label{Lemma Nerve exact}
    The functor \(\Nerve\colon{\Grpd(\C)\to \s\C}\) preserves limits and regular epimorphisms; in particular, it is exact.
\end{lemma}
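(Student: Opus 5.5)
The plan is to treat limits and regular epimorphisms separately, then deduce exactness. Throughout I use that limits in \(\Grpd(\C)\) are computed levelwise, as recalled just above the statement, and that limits in \(\s\C\) are likewise levelwise.

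\emph{Limits.} For each \(n\), the object \(\Nerve(C)_{n}\) is an iterated pullback \(C_{1}\times_{C_{0}}\cdots\times_{C_{0}}C_{1}\), that is, a finite limit of a fixed diagram shape built from \(C_{1}\), \(C_{0}\), \(d_{0}\) and \(d_{1}\). Let \(C=\lim_{d} C^{d}\) be a limit in \(\Grpd(\C)\). Then \(C_{1}=\lim C^{d}_{1}\) and \(C_{0}=\lim C^{d}_{0}\), and since limits commute with limits,
\[
    \Nerve(C)_{n}\cong\lim_{d}\Nerve(C^{d})_{n}\text{.}
\]
The face and degeneracy operators are defined by universal properties of these pullbacks, so they correspond under the isomorphism. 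Hence \(\Nerve(C)\cong\lim_{d}\Nerve(C^{d})\) levelwise, and so in \(\s\C\).

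\emph{Regular epimorphisms.} Let \(p\colon C\to D\) be a regular epimorphism of groupoids, so that \(p_{0}\) and \(p_{1}\) are extensions. I must show that each \(p_{n}\colon C_{n}\to D_{n}\) is an extension; I argue by induction on \(n\), the cases \(n=0\) and \(n=1\) being given. Write \(C_{n}=C_{n-1}\times_{C_{0}}C_{1}\), the pullback of \(\del_{0}\) and \(\del_{1}\) as in the displayed pullback ladder. The key input is that the square \((p_{1},p_{0})\) from \(d_{1}\colon C_{1}\to C_{0}\) to \(d_{1}\colon D_{1}\to D_{0}\) is a square of split epimorphisms (split by \(s_{0}\)) whose horizontal maps are regular epimorphisms. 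In a regular Mal'tsev category, such a square is a double extension; this is the standard fact that a morphism of split epimorphisms with regular epic components induces a regular epi to the pullback, obtained from the Kan/Mal'tsev property, or equivalently from \cite{Carboni-Kelly-Pedicchio}. Given that, \(p_{n}\) factors as
\[
    C_{n-1}\times_{C_{0}}C_{1}\to D_{n-1}\times_{D_{0}}C_{1}\to D_{n-1}\times_{D_{0}}D_{1}\text{.}
\]
The second map is a pullback of the comparison \(C_{1}\to D_{0}\times_{D_{0}}\)\dots, which needs care. More cleanly, the first map is a pullback of \(p_{n-1}\) composed with a comparison that is a pullback of \(C_{1}\to C_{0}\times_{D_{0}}D_{1}\), which is an extension by the double-extension property. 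Both factors are therefore extensions by pullback stability, and so is their composite \(p_{n}\).

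\emph{Exactness.} A functor between regular categories that preserves finite limits and regular epimorphisms is exact. Since regular epimorphisms in \(\s\C\) are levelwise, the claim follows.

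The main obstacle is the regular-epimorphism step: getting the factorisation of \(p_{n}\) right and justifying that the squares of split epimorphisms are double extensions. Rather than rely on \cite{Carboni-Kelly-Pedicchio}, I would first try to prove this directly via generalised elements and protomodularity.
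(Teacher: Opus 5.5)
Your proof is correct in substance. The limit argument and the exactness deduction are the same as the paper's, and the difference lies in the regular-epimorphism step. The paper views \(C_{n}\) as the pullback of the split epimorphisms \(\del_{0},\del_{n-1}\colon C_{n-1}\to C_{n-2}\). It then cites Bourn's result that, in a regular Mal'tsev category, a levelwise extension between such pullbacks induces an extension between them~\cite[Proposition~4.1]{Bourn2003}, so its induction uses \(p_{n-1}\) and \(p_{n-2}\).

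You instead write \(C_{n}=C_{n-1}\times_{C_{0}}C_{1}\), along \(\del_{0}^{\,n-1}\) and \(d_{1}\). You reduce everything to the more elementary fact that a morphism of split epimorphisms with regular epic components has a regular epic comparison to the pullback, together with pullback-stability and composition of extensions. In effect you prove the special case of Bourn's proposition needed here. The paper's version is shorter; yours is more self-contained and only involves \(p_{n-1}\), \(p_{1}\) and \(p_{0}\). The elementary fact does hold in any regular Mal'tsev category. If you prove it yourself, argue by difunctionality of the image of \((d_{1},p_{1})\colon C_{1}\to C_{0}\times D_{1}\), or in the internal logic of a regular category, rather than with plain generalised elements, which cannot detect regular epimorphisms.

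What does need fixing is the factorisation paragraph, whose justification does not match the maps you display. In your factorisation through \(D_{n-1}\times_{D_{0}}C_{1}\):
\begin{itemize}
    \item The second map is simply the pullback of \(p_{1}\) along the projection \(D_{n}\to D_{1}\).
    \item The first map is the base change along \(d_{1}\colon C_{1}\to C_{0}\) of the comparison \(C_{n-1}\to D_{n-1}\times_{D_{0}}C_{0}\) of the square \((p_{n-1},p_{0})\) between the split epimorphisms \(\del_{0}^{\,n-1}\). So it needs the Mal'tsev fact for that square, not for \((p_{1},p_{0})\).
\end{itemize}
Your ``more cleanly'' sentence actually describes a different factorisation,
\[
    C_{n-1}\times_{C_{0}}C_{1}\longrightarrow C_{n-1}\times_{D_{0}}D_{1}\longrightarrow D_{n-1}\times_{D_{0}}D_{1}\text{.}
\]
Here the first factor is the pullback of the comparison \(C_{1}\to C_{0}\times_{D_{0}}D_{1}\) along the projection \(C_{n-1}\times_{D_{0}}D_{1}\cong C_{n-1}\times_{C_{0}}(C_{0}\times_{D_{0}}D_{1})\to C_{0}\times_{D_{0}}D_{1}\). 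The second factor is the pullback of \(p_{n-1}\) along \(D_{n}\to D_{n-1}\), using \(C_{n-1}\times_{D_{0}}D_{1}\cong C_{n-1}\times_{D_{n-1}}D_{n}\). Either version works, and the second uses exactly the square \((p_{1},p_{0})\) you singled out as the key input. State one of them in this form and the argument is complete.
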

\begin{proof}
    Limits are preserved since each \(\Nerve(-)_{n}\) is built from finite limits in \(\C\), and limits commute with limits. As for regular epimorphisms: let \(g\colon{C\to D}\) be a regular epimorphism of internal groupoids, so that \(g_{1}\) and \(g_{0}\) are extensions. We show by induction that each \(g_{n}\colon{C_{n}\to D_{n}}\) is an extension, there being nothing to prove for \(n\leq 1\). For \(n\geq 2\), the object \(C_{n}\) is the pullback of \(\del_{0}\) along \(\del_{n-1}\colon{C_{n-1}\to C_{n-2}}\), both split by degeneracies, and likewise for \(D_{n}\); in a regular Mal'tsev category, a levelwise extension between such pullbacks induces an extension between the pullbacks~\cite[Proposition~4.1]{Bourn2003}.
\end{proof}

The nerve allows us to speak of the homology of an internal groupoid~\cite{EKVdL}: we set \(H_{n}C\coloneq H_{n}\Nerve(C)\), so that \(H_{1}C=\Ker(d_{0})\cap \Ker(d_{1})\) and, as for any simplicial object~\cite{EverVdL2}, \(H_{0}C=\coeq(d_{0},d_{1})\). These homology objects are preserved by any exact functor:

\begin{lemma}\label{Lemma Exact Functor Groupoid Homology}
    Let \(U\) be an exact functor between semi-abelian categories. Then for any internal groupoid \(C\), the canonical comparisons \(U(H_{1}C)\to H_{1}(UC)\) and \(U(H_{0}C)\to H_{0}(UC)\) are isomorphisms.
\end{lemma}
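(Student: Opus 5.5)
We treat the two homology objects separately, using the explicit descriptions \(H_{1}C=\Ker(d_{0})\cap\Ker(d_{1})\) and \(H_{0}C=\coeq(d_{0},d_{1})\). Both descriptions also apply to \(UC\). Indeed, \(U\) is exact, so it preserves finite limits. Hence it preserves the pullbacks that define the nerve, and \(UC\) is again an internal groupoid with \(\Nerve(UC)\cong U\Nerve(C)\).

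For \(H_{1}\), we write \(H_{1}C\) as the kernel of the single morphism \((d_{1},d_{0})\colon C_{1}\to C_{0}\times C_{0}\), as in the proof of Lemma~\ref{Lemma H_{1} Zero Means Relation}. The functor \(U\) preserves binary products, kernels and the zero object, being left exact between pointed categories. The canonical comparison \(U(H_{1}C)\to H_{1}(UC)\) is therefore the canonical comparison \(U(\Ker(d_{1},d_{0}))\to\Ker(U(d_{1}),U(d_{0}))\), which is an isomorphism. The same argument works with the intersection of kernels, since intersections are pullbacks.

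For \(H_{0}\), the plan is to show that \(\coeq(d_{0},d_{1})\) is the cokernel of a morphism built from finite limits, and then use that \(U\) preserves cokernels (being exact) together with those finite limits. Concretely, the coequaliser of the reflexive pair \((d_{0},d_{1})\) in a semi-abelian category is the cokernel of \(d_{1}\circ\ker(d_{0})\colon\Ker(d_{0})\to C_{0}\). This holds because the image of \((d_{0},d_{1})\) is an equivalence relation \(R\) on \(C_{0}\) (the Mal'tsev property, together with the fact that images of groupoids are groupoids). Its coequaliser is the cokernel of its normalisation, and that normalisation is the image of \(d_{1}\ker(d_{0})\). So \(H_{0}C=\coker(d_{1}\ker(d_{0}))\). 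Now \(U\) preserves \(\Ker(d_{0})\) and \(\ker(d_{0})\), so \(U(d_{1}\ker(d_{0}))=d_{1}^{UC}\ker(d_{0}^{UC})\). Since \(U\) preserves cokernels (it preserves regular epimorphisms and kernels, and cokernels in a semi-abelian category are the regular epimorphisms with the corresponding normal kernel), we obtain \(U(H_{0}C)\cong H_{0}(UC)\). One checks that this isomorphism is the canonical comparison by composing with the quotient map \(UC_{0}\to U(H_{0}C)\).

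The main obstacle is the identification \(\coeq(d_{0},d_{1})=\coker(d_{1}\ker(d_{0}))\), together with the preservation of cokernels by \(U\). For the latter we argue as follows. If \(q=\coker(k)\) with \(k\) arbitrary, factor \(k\) as a regular epimorphism followed by the normal monomorphism \(\ker(q)\). Regular epimorphisms and monomorphisms are preserved by \(U\), so \(U(k)\) and \(U\ker(q)=\ker(Uq)\) have the same image. Since \(Uq\) is a regular epimorphism, it is the cokernel of its kernel, hence of \(U(k)\). For the former, a direct argument suffices. A morphism \(t\colon C_{0}\to Z\) satisfies \(td_{0}=td_{1}\) if and only if \(t d_{1}\ker(d_{0})=0\): one direction is immediate. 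For the other, use the splitting \(s_{0}\) of \(d_{0}\) and protomodularity: \(\ker(d_{0})\) and \(s_{0}\) are jointly strongly epimorphic onto \(C_{1}\), and \(td_{0}\) and \(td_{1}\) agree on both of them.
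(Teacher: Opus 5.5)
Your treatment of \(H_{1}\) is correct and is the paper's argument. The gap is in the \(H_{0}\) part, in the claim that \(U\) preserves cokernels. For an arbitrary \(k\) with \(q=\coker(k)\), the factorisation of \(k\) as a regular epimorphism followed by \(\ker(q)\) does not exist in general. Here \(\ker(q)\) is the normal closure of the image of \(k\), and it is strictly larger whenever that image is not normal: the inclusion of a non-normal subgroup \(\mathbb{Z}/2\leq S_{3}\) has cokernel \(S_{3}\to 0\). The conclusion itself is also false. The forgetful functor from non-unital rings to \(\Ab\) preserves finite limits and regular epimorphisms, so it is exact in the paper's sense. Yet \(n\mapsto nE_{11}\colon{\mathbb{Z}\to M_{2}(\mathbb{Z})}\) has cokernel \(0\) in rings, because the ideal generated by \(E_{11}\) is everything, and cokernel \(\mathbb{Z}^{3}\) in abelian groups. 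So exactness alone does not carry \(\coker(d_{1}\ker(d_{0}))\) across \(U\). Your protomodularity argument that \(\coeq(d_{0},d_{1})=\coker(d_{1}\ker(d_{0}))\) is correct, but it does not supply what is missing.

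What is missing is that the image of \(d_{1}\ker(d_{0})\) is normal; in fact it is \(\ker(q)\) for \(q=\coeq(d_{0},d_{1})\). You had the ingredients in your first justification, before replacing it by the ``direct argument''. Write \(r_{0},r_{1}\) for the projections of the image \(R\) of \((d_{0},d_{1})\).
\begin{itemize}
\item \(R\) is reflexive, hence an equivalence relation by the Mal'tsev property, and it is effective by exactness.
\item \(\Ker(d_{0})\to\Ker(r_{0})\) is a pullback of the regular epimorphism \(C_{1}\to R\). Hence the image of \(d_{1}\ker(d_{0})\) is the normalisation \(r_{1}\ker(r_{0})\), and this is the kernel of the coequaliser \(q\) of \(R\).
\item Consequently \(U(q)\) is a regular epimorphism whose kernel \(U(\ker q)\) is the image of \(U(d_{1}\ker d_{0})=U(d_{1})\ker(U(d_{0}))\). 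So \(U(q)=\coker(U(d_{1})\ker(U(d_{0})))=\coeq(U(d_{0}),U(d_{1}))\).
\end{itemize}
Repaired this way, your argument is essentially the paper's, phrased with normalisations and cokernels instead of kernel pairs and coequalisers. The paper observes that \(U\) preserves the image \(S\), the regular epimorphism \(q\) and its kernel pair \(S\), and therefore the coequaliser. In both versions, the Mal'tsev property and effectiveness of the image relation are the essential inputs, and a general cokernel-preservation claim cannot replace them.
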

\begin{proof}
    Since \(U\) preserves finite limits, \(UC\) is again an internal groupoid, and
    \[
        U(H_{1}C)=U(\Ker(d_{0})\cap\Ker(d_{1}))\cong \Ker(U(d_{0}))\cap\Ker(U(d_{1}))=H_{1}(UC)\text{,}
    \]
    intersections of kernels being finite limits.

    As for \(H_{0}\), let \(S\) be the image of \((d_{1},d_{0})\colon{C_{1}\to C_{0}\times C_{0}}\), so that \(\coeq(d_{0},d_{1})=\coeq(S)\). The relation \(S\) is reflexive, hence an equivalence relation since we are in a Mal'tsev category~\cite{Carboni-Kelly-Pedicchio}, and effective by exactness of the surrounding category: \(S\) is the kernel pair of its own coequaliser \(q\). Now \(U\) preserves the image factorisation of \((d_{1},d_{0})\), so that \(U(S)\) is the image of \((U(d_{1}),U(d_{0}))\); it also preserves the extension \(q\) and its kernel pair, so that \(U(q)\) is an extension with kernel pair \(U(S)\), hence the coequaliser of \(U(S)\). Combining, we find
    \[
        U(H_{0}C)=U(\coeq(S))\cong \coeq(U(S))=\coeq(U(d_{0}),U(d_{1}))=H_{0}(UC)\text{.}\qedhere
    \]
\end{proof}

From~\cite[Lemma~4.4]{EGVdL} we know that

\begin{lemma}\label{Lemma Pullback Preservation Split}
    The Birkhoff reflector \(F\colon \C\to \X\) preserves pullbacks of split epimorphisms along split epimorphisms.\noproof
\end{lemma}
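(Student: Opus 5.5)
The plan is to show directly that the comparison morphism is an isomorphism, by proving it is both a regular epimorphism and a monomorphism. Consider a pullback
\[
    \xymatrix{P \pullback \ar[d]_-{p_{A}} \ar[r]^-{p_{B}} & B \ar[d]^-{g} \\
    A \ar[r]_-{f} & C}
\]
of split epimorphisms \(f\) and \(g\), with sections \(s\colon{C\to A}\) and \(t\colon{C\to B}\). The projections \(p_{A}\) and \(p_{B}\) are then split as well, by \(u\coloneq(1_{A},tf)\colon{A\to P}\) and \(v\coloneq(sg,1_{B})\colon{B\to P}\). Write \(c\colon{F(P)\to F(A)\times_{F(C)}F(B)}\) for the comparison morphism; we must show that \(c\) is invertible.

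\emph{Step 1: \(c\) is a regular epimorphism.} In a regular Mal'tsev category, a pullback square of split epimorphisms is a double extension. Since \(\C\) is semi-abelian, it is therefore also a pushout. The functor \(F\) is a left adjoint, so it preserves this pushout, and it preserves split epimorphisms. Hence \(F\) applied to the square gives a pushout of regular epimorphisms in the Birkhoff subcategory \(\X\). The subcategory \(\X\) is itself exact Mal'tsev, being closed under subobjects and quotients. By the result of Carboni, Kelly and Pedicchio recalled after Definition~\ref{Definition Double Extension}, such a pushout is a double extension, so \(c\) is an extension.

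\emph{Step 2: \(c\) is a monomorphism.} Using generalised elements (Convention~\ref{Convention Generalised Elements}), the kernel of \(c\) is \(\Ker(F(p_{A}))\cap\Ker(F(p_{B}))\). Pulling back along the extension \(\eta_{P}\), it suffices to show \(\eta_{P}^{-1}(\Ker c)\leq G(P)\). By naturality of \(\eta\), this preimage is the subobject \(Q\) of elements of \(P\) whose images under \(p_{A}\) and \(p_{B}\) lie in \(G(A)\) and \(G(B)\), respectively. In other words, \(Q\cong G(A)\times_{G(C)}G(B)\), where \(G(A)\) and \(G(B)\) sit over \(G(C)\) via the maps \(G(f)\) and \(G(g)\). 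These maps are split epimorphisms, with sections \(G(s)\) and \(G(t)\).

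By protomodularity, \(Q\) is generated by the kernel of its projection \(Q\to G(B)\) together with the section of that projection. The kernel is \(\Ker(G(f))\leq G(A)\), embedded into \(P\) through \(u\). The section is \(G(B)\) embedded through \(v\). Both embeddings factor through \(G(P)\), since \(G(u)\colon{G(A)\to G(P)}\) and \(G(v)\colon{G(B)\to G(P)}\) are compatible with \(\mu\). It follows that \(Q\leq G(P)\), so \(\Ker(c)=0\).

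The main obstacle is Step~2: correctly identifying \(\eta_{P}^{-1}(\Ker c)\) with \(G(A)\times_{G(C)}G(B)\), and checking that the protomodular generation argument really takes place inside \(P\). The point to verify is that the restriction of \(u\) to \(\Ker(G(f))\) agrees with the kernel inclusion of \(Q\to G(B)\). Once both steps are in place, \(c\) is a monic regular epimorphism, hence an isomorphism, which proves the lemma.
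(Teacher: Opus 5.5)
Your proof is correct. The paper gives no argument for this lemma: it imports it from \cite[Lemma~4.4]{EGVdL}, where it is stated for every level of the tower of Galois structures at once. So your proposal is a self-contained substitute for a citation rather than a variant of an argument in the text.

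The point you flag does check out. Inside \(P=A\times_{C}B\), the kernel of \(Q\to G(B)\) consists of the pairs \((a,0)=u(a)\) with \(a\in\Ker(G(f))\), and the section is \(v\circ\mu_{B}\). The equalities \(u\circ\mu_{A}=\mu_{P}\circ G(u)\) and \(v\circ\mu_{B}=\mu_{P}\circ G(v)\) then place both generators in \(G(P)\). Combined with the evident inclusion \(G(P)\leq Q\), your Step~2 actually shows that \(G\) itself preserves such pullbacks, using nothing beyond protomodularity and the naturality of \(\mu\).

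The only ingredient that goes beyond homological structure is the appeal to Carboni--Kelly--Pedicchio in the exact category \(\X\) in Step~1. That is legitimate for the statement as given. However, the paper also uses this lemma in the categories \(\Ext^{k}(\C)\) of the tower (see the end of Section~\ref{section reflection nerve}), which it treats as merely homological, and covering that use is what the citation buys.

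If you want Step~1 free of exactness, argue as follows. By protomodularity, \(F(A)\times_{F(C)}F(B)\) is covered by two pieces: the kernel \(\Ker(F(f))\) of its projection onto \(F(B)\), and the section \((F(sg),1_{F(B)})=c\circ F(v)\). Moreover, \(c\circ\eta_{P}\circ u\) restricted to \(\Ker(f)\) is the restriction \(\Ker(f)\to\Ker(F(f))\) of \(\eta_{A}\). That restriction is an extension by Lemma~\ref{Lemma Kernels Double Extension}, applied to the strong \(\E\)-Birkhoff square of \(f\). Hence the image of \(c\) contains both generators, so \(c\) is a regular epimorphism, using only the strong Birkhoff property.
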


\begin{proposition}[Birkhoff reflectors commute with the nerve construction]\label{Birkhoff Reflector Nerve}
    If \(F\colon{\C\to \X}\) is a Birkhoff reflector of a semi-abelian category \(\C\), then for any internal groupoid \(C\) in \(\C\), the reflexive graph \(F(C)\), obtained by applying \(F\) degree-wise, is again an internal groupoid, and
    \[
        F(\Nerve(C))\cong \Nerve(F(C)).
    \]
\end{proposition}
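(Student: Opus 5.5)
Both halves of the statement follow from Lemma~\ref{Lemma Pullback Preservation Split}, combined with the fact that every object of \(\Nerve(C)\) is an iterated pullback of split epimorphisms along split epimorphisms.

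\emph{Step 1: \(F(C)\) is a groupoid.} Write \(C\) as \(d_{0},d_{1}\colon C_{1}\two C_{0}\) with splitting \(s_{0}\). Applying \(F\) gives a reflexive graph \(F(C_{1})\two F(C_{0})\) in \(\X\), with \(F(s_{0})\) still a common section. The object of composable pairs \(C_{2}=C_{1}\times_{(d_0,d_1)}C_{1}\) is a pullback of the split epimorphism \(d_{1}\) along the split epimorphism \(d_{0}\). By Lemma~\ref{Lemma Pullback Preservation Split}, \(F(C_{2})\cong F(C_{1})\times_{F(C_{0})}F(C_{1})\). The composite of \(F(\del_{1})\) with this isomorphism is therefore a multiplication on \(F(C)\), and the unit laws are inherited by functoriality. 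Since \(\X\) is closed under subobjects and finite limits in the Mal'tsev category \(\C\), it is Mal'tsev itself. Hence the multiplicative reflexive graph \(F(C)\) is an internal groupoid~\cite{Carboni-Pedicchio-Pirovano}; alternatively, one can view it in \(\C\) and use that limits in \(\X\) are computed in \(\C\).

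\emph{Step 2: the isomorphism of nerves.} I would argue by induction on \(n\), using the defining pullback squares displayed before Proposition~\ref{Proposition Characterisation Nerve}. In each of them,
\[
C_{n}=C_{n-1}\times_{C_{n-2}}C_{1}
\]
is a pullback of \(\del_{0}\colon C_{n-1}\to C_{n-2}\) along \(\del_{n-1}\). Both maps are split epimorphisms, split by the degeneracies \(s_{0}\) and \(s_{n-1}\) respectively. Lemma~\ref{Lemma Pullback Preservation Split} therefore shows that \(F\) carries this square to a pullback. Combined with the induction hypothesis \(F(C_{k})\cong\Nerve(F(C))_{k}\) for \(k<n\), this yields canonical isomorphisms \(\varphi_{n}\colon F(C_{n})\to \Nerve(F(C))_{n}\) compatible with \(\del_{0}\) and \(\del_{n}\). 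In fact \(\varphi_{n}\) is just the comparison induced by \(F\) applied to the projections \(C_{n}\to C_{1}\).

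Compatibility with the remaining faces and the degeneracies follows from uniqueness. In both simplicial objects, these operators are determined by the universal property of the same pullbacks, through the equations \(\del_{0}\del_{i}=\del_{i-1}\del_{0}\), \(\del_{n-1}\del_{i}=\del_{i}\del_{n}\), and their analogues for the \(s_{i}\). The functor \(F\) preserves these equations, and Step~1 gives agreement in low degrees, where \(\del_{1}\colon C_{2}\to C_{1}\) is the composition. Hence the \(\varphi_{n}\) commute with all simplicial operators.

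\emph{Main obstacle.} The real difficulty is the one isolated in the cited Lemma~\ref{Lemma Pullback Preservation Split}: \(F\) preserves pullbacks only of split epimorphisms along split epimorphisms. So the key check is that at every stage of the induction, both legs of the pullback defining \(C_{n}\) are split, and that the splittings can be chosen simplicially (\(s_{0}\) for \(\del_{0}\), \(s_{n-1}\) for \(\del_{n-1}\)). This holds because we have a simplicial object with degeneracies, so Lemma~\ref{Lemma Pullback Preservation Split} applies at every degree without extra hypotheses.
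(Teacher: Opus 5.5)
Your proof is correct and follows the paper's route. Both arguments apply Lemma~\ref{Lemma Pullback Preservation Split} to the split pullbacks out of which \(\Nerve(C)\) is built, read off the multiplication of \(F(C)\) from \(F(C_{2})\cong F(C_{1})\times_{F(C_{0})}F(C_{1})\), and conclude via \cite{Carboni-Pedicchio-Pirovano} in the Mal'tsev category \(\X\). You differ only in establishing the groupoid structure before comparing nerves, and in spelling out the inductive compatibility with the inner faces and degeneracies, which the paper leaves implicit. Two harmless index slips: the defining pullback is \(C_{n}\cong C_{n-1}\times_{C_{n-2}}C_{n-1}\), not \(C_{n-1}\times_{C_{n-2}}C_{1}\), and \(\del_{n-1}\colon C_{n-1}\to C_{n-2}\) is split by \(s_{n-2}\), not \(s_{n-1}\).
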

\begin{proof}
    The nerve \(\Nerve(C)\) is built degree-wise from pullbacks of the split epimorphisms \(d_{0}\) and \(d_{1}\) along one another, with the degeneracies as sections. By Lemma~\ref{Lemma Pullback Preservation Split} the reflector \(F\) preserves these pullbacks, so that \(F(\Nerve(C))\) agrees degree-wise with the nerve of the reflexive graph \(F(C)\); this is the isomorphism \(F(\Nerve(C))\cong\Nerve(F(C))\). In particular, \(F(C)\) is a multiplicative reflexive graph: its composition is the image
    \[
        F(C_{1})\times_{F(C_{0})}F(C_{1})\cong F(C_{2})\xrightarrow{F(\del_{1})}F(C_{1})
    \]
    of that of \(C\). Hence it is an internal groupoid, \(\X\) being a Mal'tsev category~\cite{Carboni-Pedicchio-Pirovano}.
\end{proof}

In particular, we use this on the \Tsjech\ complex of a map. Given a morphism \(f\colon{X\to Y}\), we may take its kernel pair, and then complete this reflexive graph to a simplicial object by taking successive simplicial kernels in the sense of Definition~\ref{Definition Simplicial Kernel}. This yields a simplicial object \(\Cech(f)\), the \defn{\Tsjech\ complex} of \(f\). In case \(f\colon X\to Y\) is a projective presentation, this simplicial object is called a \defn{\Tsjech\ resolution of~\(Y\)}.

Explicitly, write \(X^{n}_{Y}\) for the \(n\)-fold fibre power \(X\times_{Y}\cdots\times_{Y}X\) of \(f\), the wide pullback of \(n\) copies of \(f\) along itself. Then \(\Cech(f)_{n}=X^{n+1}_{Y}\), the face \(\del_{i}\colon{X^{n+1}_{Y}\to X^{n}_{Y}}\) omits the \(i\)-th coordinate, and the degeneracy \(s_{i}\colon{X^{n}_{Y}\to X^{n+1}_{Y}}\) repeats it. Indeed, fibre powers and simplicial kernels being limits, this may be checked in \(\Set\).

\begin{lemma}\label{Cech Is Nerve}
    For any morphism \(f\colon{X\to Y}\), we have \(\Cech(f)\cong\Nerve(\R{f})\).
\end{lemma}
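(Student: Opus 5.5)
Both sides are simplicial objects built from finite limits, so I will compare them degree by degree and reduce to \(\Set\) using generalised elements (Convention~\ref{Convention Generalised Elements}), as the paper already does for the explicit description of \(\Cech(f)\). Write \(C\coloneq\R{f}\), viewed as the internal groupoid with \(C_{0}=X\), \(C_{1}=X^{2}_{Y}\), and \(d_{0}=\pi_{0}\), \(d_{1}=\pi_{1}\) in the paper's convention, with \(s_{0}\) the diagonal. It is a groupoid because an equivalence relation is one: the composition sends \((x,y),(y,z)\) to \((x,z)\). I then plan to show that \(\Nerve(C)_{n}\cong X^{n+1}_{Y}\) compatibly with faces and degeneracies.

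In degrees \(0\) and \(1\) the two objects agree by definition, with the same faces and the same degeneracy. For \(n\geq 2\), \(\Nerve(C)_{n}\) is the iterated pullback \(C_{1}\times_{X}\cdots\times_{X}C_{1}\) of \(n\) factors, joined along matching endpoints. On generalised elements, a string of \(n\) composable pairs \((x_{0},x_{1}),(x_{1},x_{2}),\dots,(x_{n-1},x_{n})\) with all \(f(x_{i})\) equal is the same datum as an \((n+1)\)-tuple \((x_{0},\dots,x_{n})\in X^{n+1}_{Y}\). This gives a natural isomorphism \(\phi_{n}\colon\Nerve(C)_{n}\to X^{n+1}_{Y}\), defined by the universal property of the wide pullback. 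Its components are the endpoint projections; its inverse is induced by the pairs of consecutive coordinates. Both maps are limit-induced, so invertibility can be checked in \(\Set\) via representables.

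Next I will check that \(\phi\) commutes with faces and degeneracies. The outer face \(\del_{0}\) drops the first arrow, hence the coordinate \(x_{0}\). The top face \(\del_{n}\) drops the last arrow, hence \(x_{n}\). An inner face \(\del_{i}\) composes two consecutive arrows \((x_{i-1},x_{i}),(x_{i},x_{i+1})\mapsto(x_{i-1},x_{i+1})\), which omits \(x_{i}\). Degeneracies insert an identity \((x_{i},x_{i})\), which repeats \(x_{i}\). This matches the description of \(\Cech(f)\) recalled just before the lemma: faces omit a coordinate and degeneracies repeat it.

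The only real care needed is in matching indexing conventions. The nerve's faces are defined recursively through the conditions \(\del_{0}\del_{i}=\del_{i-1}\del_{0}\) and \(\del_{n-1}\del_{i}=\del_{i}\del_{n}\), and I must verify that the map omitting the \(i\)th coordinate satisfies exactly these equations. Once it does, uniqueness in the pullback universal property forces agreement, and I expect this to be routine. I would argue inductively on \(n\), using these defining equations together with joint monicity of the pullback projections.
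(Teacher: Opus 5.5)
Your proposal is essentially the paper's proof: you identify strings of \(n\) composable pairs in \(\R{f}\) with \((n+1)\)-tuples in \(X^{n+1}_{Y}\) on generalised elements, then check that on both sides faces omit a coordinate and degeneracies repeat one. One labelling slip needs fixing, of exactly the kind you anticipated. With \(d_{0}=\pi_{0}\) and \(d_{1}=\pi_{1}\), the \Tsjech\ face \(\del_{0}\colon X^{2}_{Y}\to X\), which omits coordinate \(0\), is \(\pi_{1}\) rather than \(d_{0}\). Moreover, your string \((x_{0},x_{1}),(x_{1},x_{2}),\dots\) violates the nerve's pullback condition \(\del_{1}\del_{0}=\del_{0}\del_{2}\) in degree \(2\). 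Either take \(d_{0}=\pi_{1}\) and \(d_{1}=\pi_{0}\), or write the pairs as \((x_{i+1},x_{i})\), as the paper does.
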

\begin{proof}
    In degree \(n\), the nerve of the equivalence relation \((\R{f},\pi_{0},\pi_{1})\) is the limit of strings of \(n\) composable pairs, and a pointwise verification identifies it with \(X^{n+1}_{Y}\): a string \(((a_{1},a_{0}),(a_{2},a_{1}),\dots,(a_{n},a_{n-1}))\) corresponds to the tuple \((a_{0},\dots,a_{n})\). Under this identification and the explicit description of \(\Cech(f)\) given above, the faces and degeneracies of the two simplicial objects agree.
\end{proof}

The \defn{first \Tsjech\ derived functor} \(\HH_{1}F\) of a functor \(F\colon{\C\to \X}\) is defined on an object \(Y\) as the first homology of the simplicial object \(F(\Cech(f))\), where \(f\colon{X\to Y}\) is a projective presentation of \(Y\). In degree zero nothing new arises: when \(F\) is our Birkhoff reflector, \(H_{0}F(\Cech(f))=\coeq(F(\pi_{0}),F(\pi_{1}))\cong F(Y)\), the left adjoint \(F\) preserving the coequaliser \(f\) of the pair \((\pi_{0},\pi_{1})\). Thus \(\HH_{0}F=F\), which by Convention~\ref{Convention Comonadic Homology} below is \(H_{1}({-},F)\), so that the comparison of Theorem~\ref{Theorem Comparison Level n} extends to degree zero.

\begin{proposition}\label{Cech on Split Epi}
    If \(f\colon{X\to Y}\) is a split epimorphism, then \(H_{n}F(\Cech(f))=0\) for all \(n\geq 1\). In particular, if \(Y\) is projective, then \(\HH_{1}F(Y)=0\).
\end{proposition}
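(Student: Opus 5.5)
The plan is to use the standard fact that the \Tsjech\ complex of a split epimorphism carries an extra degeneracy, and that such a contraction survives any functor applied degree-wise. Let \(s\colon{Y\to X}\) be a section of \(f\). On \(\Cech(f)_{n}=X^{n+1}_{Y}\) define
\[
    s_{-1}\colon{X^{n+1}_{Y}\to X^{n+2}_{Y}}\colon(a_{0},\dots,a_{n})\mapsto(sf(a_{0}),a_{0},\dots,a_{n}),
\]
which is well defined because \(fsf(a_{0})=f(a_{0})\). We then check with generalised elements (Convention~\ref{Convention Generalised Elements}) the identities
\[
    \del_{0}s_{-1}=1,\qquad \del_{i+1}s_{-1}=s_{-1}\del_{i}\ (i\geq 0),\qquad s_{i+1}s_{-1}=s_{-1}s_{i}.
\]
Together with the augmentation \(f\) and \(s\) in degree \(-1\), these make the augmented simplicial object \(\Cech(f)\to Y\) split, or contractible. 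Since these are equations between morphisms, any functor applied degree-wise preserves them, so \(F(\Cech(f))\), augmented by \(F(f)\), is again split augmented, with extra degeneracy \(F(s_{-1})\).

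Next we show that a split augmented simplicial object \(S\to S_{-1}\) in the semi-abelian category \(\X\) has \(H_{n}S=0\) for \(n\geq 1\). Take a generalised element \(x\) of \(N_{n}S\), so \(\del_{i}x=0\) for \(i<n\), and suppose \(\del_{n}x=0\) as well. Put \(y\coloneq s_{-1}x\in S_{n+1}\). Then \(\del_{0}y=x\) and \(\del_{i+1}y=s_{-1}\del_{i}x=0\) for all \(i\leq n\). This \(y\) has the wrong kernel pattern for the Moore complex, which uses \(\Ker(\del_{0}),\dots,\Ker(\del_{n-1})\) with differential \(\del_{n}\). So we either pass to the dual normalisation, where \(\del_{0}\) is the differential, or reverse the simplicial structure (\(\del_{i}\leftrightarrow\del_{n-i}\)). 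Reversal does not change homology, and it turns the extra degeneracy into \(s_{n+1}\)-type contractions, with \(\del_{n+1}s_{n+1}=1\) and the other faces commuting. For the \Tsjech\ complex this is also visible directly: \((a_{0},\dots,a_{n})\mapsto(a_{0},\dots,a_{n},sf(a_{n}))\) is a last extra degeneracy.

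With this version, \(y\coloneq s_{n+1}x\) satisfies \(\del_{i}y=s_{n}\del_{i}x=0\) for \(i<n\), \(\del_{n}y=s_{n}\del_{n}x=0\), and \(\del_{n+1}y=x\). Hence \(y\in N_{n+1}S\) with \(\del_{n+1}y=x\), so every cycle is a boundary on generalised elements. In a semi-abelian category, cycles \(Z_{n}\) and boundaries (the image of \(N_{n+1}\to N_{n}\)) are subobjects, so we argue with morphisms rather than elements. The maps \(s_{n+1}\) restrict to \(\sigma\colon{Z_{n}\to N_{n+1}S}\) with \(\del_{n+1}\sigma=1_{Z_{n}}\). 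So \(Z_{n}\) is contained in the image of \(\del_{n+1}\), and \(H_{n}=0\) for \(n\geq 1\). The same argument works in degree \(0\), giving \(H_{0}\cong S_{-1}\), but we do not need this.

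The main obstacle is the bookkeeping in the last step: making sure the indices of the extra degeneracy match the paper's Moore complex convention. Choosing the last-coordinate contraction above should settle it. The ``in particular'' then follows at once. If \(Y\) is projective, a projective presentation \(f\colon{X\to Y}\) splits by projectivity of \(Y\), so \(\HH_{1}F(Y)=H_{1}F(\Cech(f))=0\).
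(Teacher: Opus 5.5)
Your argument is correct, but it follows a different route from the paper's. You use the equational contraction of the augmented \Tsjech\ complex of a split epimorphism. The last-coordinate extra degeneracy \((a_{0},\dots,a_{n})\mapsto(a_{0},\dots,a_{n},sf(a_{n}))\) satisfies \(\del_{n+1}s_{n+1}=1\) and \(\del_{i}s_{n+1}=s_{n}\del_{i}\) for \(i\leq n\); the case \(i=n\) uses \(f(a_{n})=f(a_{n-1})\). These identities survive any functor. They split the boundary map \(N_{n+1}\to Z_{n}\) in exactly the paper's normalisation, so \(H_{n}=0\) for \(n\geq 1\).

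The paper instead argues structurally. By Lemma~\ref{Lemma Pullback Preservation Split}, \(F\) preserves the kernel pair of the split epimorphism \(f\), so \(F(\R{f})=\R{F(f)}\) is an equivalence relation. Lemma~\ref{Cech Is Nerve} and Proposition~\ref{Birkhoff Reflector Nerve} then give \(F(\Cech(f))\cong\Nerve(F(\R{f}))\). Its Moore complex vanishes in degrees \(\geq 2\) by Proposition~\ref{Proposition Characterisation Nerve}, and \(H_{1}\) vanishes because \(F(\pi_{0})\) and \(F(\pi_{1})\) are jointly monic.

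Your route is more elementary and more general. It uses nothing about \(F\) beyond functoriality: no Birkhoff hypothesis and no preservation of pullbacks. So it applies verbatim to an arbitrary functor into a homological category, and it gives \(H_{0}F(\Cech(f))\cong F(Y)\) as a by-product.

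The paper's route buys the identification \(F(\Cech(f))\cong\Nerve(F(\R{f}))\) for an arbitrary \(f\), with splitness entering only at the last step. That identification is exactly what is reused in Proposition~\ref{Proposition HH_1 as Homology of a Groupoid}, and in higher form in the Loday-type description of \(\HH_{n+1}F\).

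One minor point: your aside that reversing the simplicial structure does not change homology is not needed. It would also require justification in a semi-abelian setting; the paper never invokes such a general fact and compares the two normalisations only in a special case (Remark~\ref{Remark Culot Everaert}). Your direct last-coordinate contraction makes it superfluous.
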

\begin{proof}
    By Lemma~\ref{Lemma Pullback Preservation Split}, the Birkhoff reflector \(F\) preserves the kernel pair of~\(f\), so that \(F(\R{f})=\R{F(f)}\) is again a kernel pair, and in particular an equivalence relation. By Lemma~\ref{Cech Is Nerve} and Proposition~\ref{Birkhoff Reflector Nerve},
    \[
        F(\Cech(f))\cong F(\Nerve(\R{f}))\cong\Nerve(F(\R{f}))\text{.}
    \]
    Being the nerve of a groupoid, this simplicial object has a Moore complex which is trivial in degrees \(\geq 2\) by Proposition~\ref{Proposition Characterisation Nerve}, so that \(H_{n}F(\Cech(f))=0\) for all \(n\geq 2\). Moreover, \(H_{1}F(\Cech(f))=\Ker(F(\pi_{0}))\cap \Ker(F(\pi_{1}))=0\), the pair \((F(\pi_{0}),F(\pi_{1}))\) being jointly monic as the kernel pair of \(F(f)\). For the second statement, note that any projective presentation of a projective object \(Y\) is a split epimorphism, so that \(\HH_{1}F(Y)=H_{1}F(\Cech(f))=0\).
\end{proof}

\begin{proposition}\label{Proposition HH_1 as Homology of a Groupoid}
    For any projective presentation \(f\colon{X\to Y}\) of an object \(Y\) we have that \(\HH_{1}F(Y)\) is the first homology of the groupoid \(F(\R{f})\two F(X)\).
\end{proposition}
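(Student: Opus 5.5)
This is essentially a matter of unwinding the definitions and chaining three earlier results. By definition, \(\HH_{1}F(Y)\) is \(H_{1}\) of the simplicial object \(F(\Cech(f))\), where \(f\colon{X\to Y}\) is the chosen projective presentation. The plan is to identify this simplicial object with the nerve of the groupoid \(F(\R{f})\two F(X)\). Then, by the definition \(H_{n}C\coloneq H_{n}\Nerve(C)\) of groupoid homology, the two first homology objects coincide.

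Concretely, I would proceed in three steps.
\begin{enumerate}
\item Lemma~\ref{Cech Is Nerve} gives \(\Cech(f)\cong\Nerve(\R{f})\) as simplicial objects. Here \(\R{f}\) is the kernel pair of \(f\), viewed as an internal groupoid, in fact an equivalence relation.
\item Apply \(F\) degreewise and invoke Proposition~\ref{Birkhoff Reflector Nerve}. It says that \(F(\R{f})\two F(X)\) is again an internal groupoid in \(\X\), and that \(F(\Nerve(\R{f}))\cong\Nerve(F(\R{f}))\). Combining with the first step yields \(F(\Cech(f))\cong\Nerve(F(\R{f}))\).
\item Since homology of simplicial objects is functorial and invariant under isomorphism, we get
\[
\HH_{1}F(Y)=H_{1}F(\Cech(f))\cong H_{1}\Nerve(F(\R{f}))=H_{1}\bigl(F(\R{f})\two F(X)\bigr).
\]
\end{enumerate}

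One could also record the explicit form. By Proposition~\ref{Proposition Characterisation Nerve} the Moore complex is concentrated in degrees \(\leq 1\), so \(H_{1}\) is \(N_{1}=\Ker(F(\pi_{0}))\). Indeed, the image of the differential out of \(N_{2}=0\) is trivial, so \(H_{1}\) is the kernel of \(F(\pi_{1})\) restricted to \(\Ker(F(\pi_{0}))\), namely \(\Ker(F(\pi_{0}))\cap\Ker(F(\pi_{1}))\). This agrees with the formula \(H_{1}C=\Ker(d_{0})\cap\Ker(d_{1})\) stated earlier.

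The only point needing a little care is which homology is meant. The homology of \(F(\Cech(f))\) is computed in \(\X\), whereas the formula \(H_{1}C=\Ker(d_0)\cap\Ker(d_1)\) is a finite limit and could be read in \(\C\). The Birkhoff subcategory \(\X\) is closed under subobjects, so these kernels agree whether taken in \(\X\) or in \(\C\), and no discrepancy arises. There is no real obstacle here. The substantive inputs, namely preservation of the relevant pullbacks of split epimorphisms by \(F\) and the groupoid structure on \(F(\R{f})\), are already packaged in Proposition~\ref{Birkhoff Reflector Nerve}.
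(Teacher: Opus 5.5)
Your argument is correct and is essentially the paper's: Lemma~\ref{Cech Is Nerve} combined with Proposition~\ref{Birkhoff Reflector Nerve} gives \(F(\Cech(f))\cong\Nerve(F(\R{f}))\) for an arbitrary \(f\), and the paper then passes to the intersection \(\Ker(F(\pi_{0}))\cap\Ker(F(\pi_{1}))\) via Proposition~\ref{Proposition Characterisation Nerve}, whereas you read the conclusion off directly from the definition \(H_{n}C=H_{n}\Nerve(C)\). One small slip in your explicit aside: \(H_{1}\) is not \(N_{1}=\Ker(F(\pi_{0}))\) itself but the kernel of the differential restricted to it, as your very next sentence correctly states.
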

\begin{proof}
    In the proof of Proposition~\ref{Cech on Split Epi}, the isomorphism \(F(\Cech(f))\cong\Nerve(F(\R{f}))\) was obtained for an arbitrary morphism \(f\); the splitness of \(f\) entered only afterwards, to make \(F(\R{f})\) an equivalence relation. Hence \(\HH_{1}F(Y)\), the first homology of \(F(\Cech(f))\), is the first homology of a nerve, whose Moore complex is trivial in degrees greater than or equal to \(2\). It is therefore the intersection of the kernels of the two faces of the reflexive graph \(F(\R{f})\two F(X)\), which is the first homology of that groupoid.
\end{proof}

In particular \(\HH_{1}F\colon \C\to \X\) is well defined: it does not depend on the chosen projective presentation of \(Y\), as also observed in~\cite{Culot-Everaert}. Recall that two internal groupoids \(C\) and \(D\) are \defn{homotopy equivalent} when they are equivalent as internal categories: there are internal functors \(u\colon{C\to D}\) and \(v\colon{D\to C}\) together with internal natural transformations \(vu\Rightarrow 1_{C}\) and \(uv\Rightarrow 1_{D}\)~\cite{EKVdL}. Under the nerve, an internal natural transformation becomes a simplicial homotopy~\cite{EKVdL}, and simplicially homotopic morphisms induce the same morphism in homology~\cite{EverVdL2}; hence homotopy equivalent internal groupoids have isomorphic homology objects. Since the Birkhoff reflector~\(F\) commutes with the nerve of the kernel-pair groupoids at hand (Proposition~\ref{Birkhoff Reflector Nerve}), it carries such a homotopy equivalence to a homotopy equivalence in \(\X\); this is the \(F\)-invariance of \(H_{1}\) used below.

\begin{lemma}\label{Lemma Homotopically Equivalent}
    If \(f\colon{X\to Y}\) and \(f'\colon{X'\to Y}\) are two projective presentations of an object \(Y\) then the equivalence relations \(\R{f}\two X\) and \(\R{f'}\two X'\) are homotopically equivalent.
\end{lemma}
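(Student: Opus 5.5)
Projectivity of the domains gives comparison maps over \(Y\) in both directions, and I will show that these induce internal functors between the two kernel-pair groupoids, with natural transformations from the composites to the identities.

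First the comparisons. Since \(X\) is projective and \(f'\) is a regular epimorphism, there is \(u_{0}\colon{X\to X'}\) with \(f'u_{0}=f\). Symmetrically, there is \(v_{0}\colon{X'\to X}\) with \(fv_{0}=f'\). Because \(u_{0}\) commutes with the maps to \(Y\), it induces \(u_{1}=u_{0}\times_{Y}u_{0}\colon{\R{f}\to\R{f'}}\), and likewise \(v_{1}\). The pair \(u=(u_{1},u_{0})\) commutes with the projections and the diagonals. Between equivalence relations (posetal groupoids) compatibility with composition is automatic, since any morphism of reflexive graphs between relations preserves the transitivity map by joint monicity of the projections. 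So \(u\) and \(v\) are internal functors.

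Next the natural transformations. An internal natural transformation \(vu\Rightarrow 1\) on \(\R{f}\two X\) is a morphism \(\tau\colon{X\to\R{f}}\) with \(\pi_{0}\tau=v_{0}u_{0}\) and \(\pi_{1}\tau=1_{X}\), subject to naturality. The existence of \(\tau\) is exactly the statement that \(f v_{0}u_{0}=f'u_{0}=f\), so \(\tau=(v_{0}u_{0},1_{X})\) is well defined. The orientation of the components is fixed by the paper's convention on \(d_{0},d_{1}\) and is immaterial here.

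The naturality square \(\tau\circ\pi_{1}\) composed with \(\ldots\) is the only step that looks like work. It reduces, by joint monicity of \((\pi_{0},\pi_{1})\) on \(\R{f}\), to equalities of underlying maps into \(X\times X\). In a relation, any two parallel arrows with the same endpoints coincide, so these equalities hold automatically. Equivalently, one can check it with generalised elements in \(\Set\): in a pointed set-theoretical equivalence relation, naturality of a transformation is trivial. The same construction gives \(uv\Rightarrow 1\) on \(\R{f'}\two X'\).

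Hence the two groupoids are equivalent as internal categories, which is the required homotopy equivalence. I expect no real obstacle. The one point needing care is that the internal functors and transformations live in \(\C\) and not in \(\Set\); joint monicity of the kernel-pair projections handles this, via Convention~\ref{Convention Generalised Elements}.
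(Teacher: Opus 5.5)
Your proposal is correct and is the paper's own argument spelled out: the paper's one-line proof also extends the projectivity-given liftings over \(Y\) to internal functors and declares them mutually pseudo-inverse. You additionally write down the transformations \((v_{0}u_{0},1_{X})\colon X\to \Eq(f)\) and observe that functoriality and naturality are automatic, by joint monicity of the kernel-pair projections. One small repair: the sentence that opens ``The naturality square \(\tau\circ\pi_{1}\) composed with \(\ldots\)'' is unfinished and should be completed or dropped.
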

\begin{proof}
    Any lifting of \(f\) over \(f'\) extends to an internal functor, pseudo-inverse to any internal functor obtained out of a lifting of \(f'\) over \(f\).
\end{proof}

\begin{convention}\label{Convention Comonadic Homology}
    The ordinary homology against which we compare the \Tsjech\ derived functors is the Barr--Beck comonadic homology, whose definition requires a comonad whose values are regular-projective objects. Such a comonad is canonical in the \defn{varietal context}, in which \(\C\) is a semi-abelian variety, \(\X\) a subvariety\footnote{That is, a subclass defined, in the sense of universal algebra, by additional identities; by Birkhoff's theorem~\cite{ACourseInUA} these are precisely the Birkhoff subcategories of \(\C\) (Definition~\ref{Definition Birkhoff Subcategory}), which is the source of that term.} of~\(\C\) and \(F\colon{\C\to \X}\) the reflector: then \(\C\) is monadic over \(\Set\), and the free/forgetful adjunction induces a comonad on \(\C\) by which every object is covered by a projective~\cite[Example~5.3]{EGVdL}. \emph{Whenever a homology object \(H_{n}(Y,F)\) occurs below, we place ourselves in this context.} Following~\cite[Definition~5.1]{EGVdL}---which extends the cotriple homology of Barr and Beck~\cite{Barr-Beck} to the semi-abelian setting---we then write
    \[
        H_{n}(Y,F)\coloneq H_{n-1}(F(P_{\bullet}))
    \]
    for the \(n\)th \defn{homology object} of \(Y\) with coefficients in \(F\), where \(n\geq 1\), the simplicial object \(P_{\bullet}\to Y\) is the canonical resolution of \(Y\) provided by this comonad, and \(H_{n-1}\) denotes the homology of the associated Moore complex. With this indexing one has \(H_{1}(Y,F)=F(Y)\), while \(H_{2}(Y,F)\) is the fundamental group \(\pi_{1}^{F}(Y)\); more generally \(H_{n+1}(Y,F)\) is computed by the Hopf formula of~\cite{EGVdL} in terms of \(n\)-fold central extensions. For groups with abelianisation as coefficients, \(H_{n}(Y,F)\) is the \(n\)th integral homology group of \(Y\)~\cite[Example~5.2]{EGVdL}. The same notation is used at every level of the tower of Galois structures of Section~\ref{section higher galois}: for an object \(A\) of~\(\Ext^{k}(\C)\), we write \(H_{n}(A,F_{k})\) for the homology objects formed in the same way from the level-\(k\) reflector \(F_{k}\) and the comonad induced on \(\Ext^{k}(\C)\)~\cite[Proposition~5.6]{EGVdL}; in particular \(H_{2}(A,F_{k})=\pi_{1}^{F_{k}}(A)\).

    The two numberings differ on purpose: the \Tsjech\ derived functors follow the simplicial degree, while \(H_{n}(Y,F)\) follows the indexing classical in the semi-abelian and group homology literature, in which \(H_{2}(Y,F)\) plays the role of the Schur multiplier. Hence the comparison below takes the shifted form \(\HH_{n}F(Y)\cong H_{n+1}(Y,F)\).
\end{convention}

Combining the previous results, we obtain

\begin{theorem}\label{Theorem Comparison Level 1}
    For any object \(Y\) in a semi-abelian category with enough projectives~\(\C\), its fundamental group \(\pi_{1}^{F}(Y)\) with respect to a Birkhoff reflector \(F\colon \C\to \X\) is isomorphic to \(\HH_{1}F(Y)\).
\end{theorem}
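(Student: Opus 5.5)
The plan is to combine Theorem~\ref{Theorem Interpretation Fundamental Group} with Proposition~\ref{Proposition HH_1 as Homology of a Groupoid}, observing that both results express their respective objects as the same intersection of kernels. First I fix a projective presentation \(f\colon{X\to Y}\) of \(Y\); one exists because \(\C\) has enough projectives. By Theorem~\ref{Theorem Interpretation Fundamental Group}, the fundamental group \(\pi_{1}^{F}(Y)\) is the intersection \(\Ker(F(\pi_{0}))\cap\Ker(F(\pi_{1}))\) of the kernels of the two projections of the Galois groupoid \(F(\R{f})\two F(X)\). Proposition~\ref{Birkhoff Reflector Nerve} guarantees that this reflexive graph is indeed an internal groupoid in \(\X\).

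Next, Proposition~\ref{Proposition HH_1 as Homology of a Groupoid} identifies \(\HH_{1}F(Y)\) with the first homology \(H_{1}\) of the same groupoid \(F(\R{f})\two F(X)\). This identification uses the isomorphism \(F(\Cech(f))\cong\Nerve(F(\R{f}))\), obtained from Lemma~\ref{Cech Is Nerve} and Proposition~\ref{Birkhoff Reflector Nerve}. By the description of groupoid homology recalled after Lemma~\ref{Lemma Nerve exact}, the first homology of a groupoid \(C\) is \(H_{1}C=\Ker(d_{0})\cap\Ker(d_{1})\). Applied to \(C=F(\R{f})\two F(X)\), whose faces are \(F(\pi_{0})\) and \(F(\pi_{1})\), this gives exactly \(\Ker(F(\pi_{0}))\cap\Ker(F(\pi_{1}))\).

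The two objects therefore coincide as subobjects of \(F(\R{f})\), for the same choice of \(f\), which proves the theorem. The only point needing care, rather than a genuine obstacle, is to make sure both sides are computed from the same presentation and to check that the result does not depend on that choice. On the fundamental-group side, independence holds because weakly universal central extensions are unique up to equivalence. On the \v{C}ech side, it follows from Lemma~\ref{Lemma Homotopically Equivalent}: \(F\) carries the homotopy equivalence between the kernel-pair groupoids of two presentations to a homotopy equivalence in \(\X\), and homotopy-equivalent groupoids have isomorphic \(H_{1}\). Since both sides are canonical in \(Y\) up to isomorphism, the isomorphism obtained for one presentation suffices.
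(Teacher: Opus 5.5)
Your proposal is correct and follows the paper's proof. Like the paper, you fix a projective presentation \(f\), read \(\HH_{1}F(Y)\) as the first homology of \(F(\R{f})\two F(X)\) via Proposition~\ref{Proposition HH_1 as Homology of a Groupoid}, and identify \(\pi_{1}^{F}(Y)\) with the same intersection of kernels via Theorem~\ref{Theorem Interpretation Fundamental Group}; your closing discussion of independence from the presentation is harmless but not needed, since both identifications hold for every choice of \(f\).
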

\begin{proof}
    Let \(f\colon{X\to Y}\) be a projective presentation of \(Y\). Then by Proposition~\ref{Proposition HH_1 as Homology of a Groupoid}, \(\HH_{1}F(Y)\) is the first homology of the groupoid \(F(\R{f})\two F(X)\). Now by Theorem~\ref{Theorem Interpretation Fundamental Group}, this groupoid is the Galois groupoid of the centralisation of \(f\). Hence the first homology in question is nothing but the fundamental group of \(Y\) with respect to the reflector \(F\).
\end{proof}

\begin{corollary}\label{Corollary Fundamental Group Projective}
    The fundamental group of a projective object is trivial.
\end{corollary}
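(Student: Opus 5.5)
The plan is to deduce the corollary directly from Theorem~\ref{Theorem Comparison Level 1} together with Proposition~\ref{Cech on Split Epi}. Let \(Y\) be a projective object of \(\C\). By Theorem~\ref{Theorem Comparison Level 1}, \(\pi_{1}^{F}(Y)\cong\HH_{1}F(Y)\), and the second statement of Proposition~\ref{Cech on Split Epi} gives \(\HH_{1}F(Y)=0\), so that \(\pi_{1}^{F}(Y)=0\).

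To make this transparent I would also note that we may choose the projective presentation to be the identity \(1_{Y}\colon{Y\to Y}\), which is legitimate because \(Y\) is projective and \(\HH_{1}F\) does not depend on the presentation (Proposition~\ref{Proposition HH_1 as Homology of a Groupoid} and Lemma~\ref{Lemma Homotopically Equivalent}). Its kernel pair is the discrete relation \(Y\two Y\), with both projections equal to \(1_{Y}\). By Theorem~\ref{Theorem Interpretation Fundamental Group}, \(\pi_{1}^{F}(Y)\) is the intersection of the kernels of the two projections of \(F(Y)\two F(Y)\). Both of these are \(1_{F(Y)}\), so the intersection is \(0\).

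Alternatively, one can argue Galois-theoretically without any presentation. The identity \(1_{Y}\) is a central extension, and for projective \(Y\) it is weakly universal, since every central extension \(f'\colon X'\to Y\) is a regular epimorphism and therefore admits a section. Its Galois group is therefore trivial.

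There is no real obstacle here. The only point to check is that the identity of a projective object qualifies as a projective presentation, which holds because an isomorphism is a regular epimorphism.
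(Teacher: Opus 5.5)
Your main argument is exactly the paper's proof: for projective \(Y\), Proposition~\ref{Cech on Split Epi} gives \(\HH_{1}F(Y)=0\), and Theorem~\ref{Theorem Comparison Level 1} transfers this to \(\pi_{1}^{F}(Y)\). Your supplementary arguments are also sound. One uses the presentation \(1_{Y}\) in Theorem~\ref{Theorem Interpretation Fundamental Group}. The other observes directly that the trivial extension \(1_{Y}\) is a weakly universal central extension whose Galois groupoid \(F(Y)\two F(Y)\) has trivial Galois group, and so bypasses the comparison theorem altogether.
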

\begin{proof}
    For a projective object \(P\) we have \(\HH_{1}F(P)=0\) by Proposition~\ref{Cech on Split Epi}, so that \(\pi_{1}^{F}(P)\cong\HH_{1}F(P)=0\) by Theorem~\ref{Theorem Comparison Level 1}.
\end{proof}

\begin{remark}\label{Remark Culot Everaert}
    In the varietal context (Convention~\ref{Convention Comonadic Homology}), the fundamental group \(\pi_{1}^{F}(Y)\) is the second homology object \(H_{2}(Y,F)\)~\cite{EGVdL}, so that Theorem~\ref{Theorem Comparison Level 1} yields \(H_{2}(Y,F)\cong \HH_{1}F(Y)\). This is the main result of~\cite{Culot-Everaert}, where Culot and Everaert obtain it directly, by simplicial means, and in the generality of our Theorem~\ref{Theorem Comparison General}: that of a semi-abelian category equipped with a comonad satisfying Quillen's conditions. The two papers were written independently: we learnt of their result only when the present article was essentially complete. Their proof computes the two sides by one and the same explicit formula: writing \(\del_{0}\), \(\del_{1}\), \(\del_{2}\colon{X^{3}_{Y}\to X^{2}_{Y}}\) for the faces of \(\Cech(f)\) in degree two, it is the quotient of \(\Ker(F(\pi_{0}))\cap \Ker(F(\pi_{1}))\) by the image under \(F(\del_{0})\) of \(\Ker(F(\del_{1}))\cap \Ker(F(\del_{2}))\). Their Moore complex is normalised with the differential induced by \(\del_{0}\) rather than by the last face; here the difference is immaterial, since by Proposition~\ref{Birkhoff Reflector Nerve} the Moore complex of \(F(\Cech(f))\) is trivial in degree two in either normalisation---the reverse of a nerve being the nerve of the opposite groupoid---while the numerator is the first homology of the Galois groupoid.

    The higher degrees are explicitly left open in~\cite{Culot-Everaert}, which singles out the induction along the tower of Galois structures of~\cite{EGVdL} as the route towards them: it is precisely that induction which Section~\ref{section induction} carries out, and the Galois-theoretic argument given here is the one it iterates. In the other direction, Culot and Everaert compare the first \Tsjech\ derived functor with the first non-additive left derived functor of~\cite{CRVdL}, computed from a chain resolution rather than from a simplicial one.
\end{remark}

Everything done up to now carries over to the relative setting of higher extensions developed in~\cite{EGVdL}, in which the regular epimorphisms are replaced by an abstract class \(\E\) of extensions---defined inductively, starting from the regular epimorphisms of \(\C\)---and \(F\) by a strongly \(\E\)-Birkhoff reflector. Since the induction of Section~\ref{section induction} invokes the results of Sections~\ref{section basic concepts}--\ref{section reflection nerve} at every level of the tower of Galois structures recalled in the next section, let us make explicit what this transfer requires. Beyond the axioms of a Galois structure, which are formulated relatively to \(\E\) from the start, our proofs use: that the surrounding category is homological, and that kernel pairs of extensions, and pullbacks of extensions along extensions, are again extensions; the strong \(\E\)-Birkhoff property of \(F\), through the double extensions it provides and through Lemma~\ref{Lemma Kernels Double Extension}, whose proof appeals to nothing but the pullback-stability of \(\E\); enough \(\E\)-projectives, wherever projective presentations occur; and, for the fundamental group, the weak universality of the centralisation of an \(\E\)-projective presentation---Theorem~5.1 of~\cite{Janelidze:Hopf}, which is established there for an abstract pointed Galois structure, projectivity being taken relatively to the class of extensions. Exactness is not needed: only the \(H_{0}\)-part of Lemma~\ref{Lemma Exact Functor Groupoid Homology} uses it, and that lemma is applied only between semi-abelian categories (Theorem~\ref{Loday Theorem}). It is for the sake of this transfer that Proposition~\ref{Proposition Characterisation Nerve} was stated for homological categories.

At each level \(k\) of the tower, the requirements just listed are supplied by~\cite{EGVdL}: the categories \(\Ext^{k}(\C)\) are homological, with pullbacks along \(\E^{k}\)-extensions computed degree-wise and \(\E^{k}\) stable under pullbacks and closed under composition~\cite[Proposition~3.5]{EGVdL}; \(\CExt^{k}(\C)\) is a strongly \(\E^{k}\)-Birkhoff subcategory of \(\Ext^{k}(\C)\)~\cite[Lemma~4.3]{EGVdL}, which makes the Galois structure \(\Gamma_{k}\) admissible~\cite[Proposition~2.6]{EGVdL}; the level-\(k\) form of Lemma~\ref{Lemma Pullback Preservation Split} is Lemma~4.4 of~\cite{EGVdL}, stated there for every level at once; and \(\Ext^{k}(\C)\) has enough \(\E^{k}\)-projectives whenever \(\C\) carries a comonad satisfying Quillen's conditions---in particular in the varietal context of Convention~\ref{Convention Comonadic Homology}---by~\cite[Proposition~5.6]{EGVdL}, extension-projectivity admitting the vertex-wise description of~\cite[Lemma~5.5]{EGVdL}. Results of these sections invoked below at level \(k\) are always meant in this relative reading.

\section{Higher Galois theory, higher groupoids}\label{section higher galois}

To proceed with higher Galois theory, we define inductively \(\Arr^n(\C)\) by setting \(\Arr^0(\C)=\C\) and \(\Arr^{n+1}(\C)\) the category whose objects are morphisms in \(\Arr^n(\C)\) and whose morphisms are commutative squares in \(\Arr^n(\C)\). An \defn{\(n\)-extension} is an extension in \(\Arr^{n-1}(\C)\); these determine a full subcategory \(\Ext^n(\C)\) of \(\Arr^n(\C)\). Following~\cite[Proposition~3.5]{EGVdL}, we write \(\E^n\) for the class of \((n+1)\)-extensions, so that \(\E^0\) is the class of regular epimorphisms and \(\E^1\) that of double extensions. At the end of Section~\ref{section basic concepts} we constructed an admissible Galois structure \(\Gamma_1\coloneq(\Ext(\C),\CExt(\C),\E^1,\Z^1,F_1,{\subseteq_1})\) in which \(\CExt(\C)\) is a strongly \(\E^1\)-Birkhoff subcategory of \(\Ext(\C)\). \(\Ext(\C)\) is no longer semi-abelian, but it remains homological~\cite{EGVdL}; although it is not closed under limits in \(\Arr(\C)\) (Example~\ref{Example Limits Ext}), the pullbacks along double extensions and the kernel pairs that the construction uses are computed degree-wise (Lemma~\ref{Lemma Kernel Pairs Ext} and, for all levels of the tower at once, \cite[Proposition~3.5]{EGVdL}), while the cokernels it needs are supplied by Proposition~\ref{Proposition Colimits Ext} and, again at every level, by~\cite[Section~3]{EGVdL}; so the construction may be iterated. We obtain a sequence of admissible Galois structures \(\Gamma_n\coloneq(\Ext^n(\C),\CExt^n(\C),\E^n,\Z^n,F_n,{\subseteq_n})\) in which \(\CExt^n(\C)\) is a strongly \(\E^n\)-Birkhoff subcategory of \(\Ext^n(\C)\). All of this is explained in more detail in~\cite{EGVdL}.

\begin{notation}\label{Notation iota}
    For a pointed category \(\D\), we write \(\iota\colon{\D\to\Arr(\D)}\) for the functor sending an object \(A\) to the morphism \((A\to 0)\) with zero codomain, and a morphism to the evident commutative square; its \(n\)-fold iterate is written \(\iota^{n}\colon{\D\to\Arr^{n}(\D)}\). A first homology object has trivial codomain, so is canonically of this form; we use \(\iota\) to record at which level of the tower \(\Arr^{n}(\C)\) we consider such an object.
\end{notation}

\begin{definition}
    Given a non-negative integer \(n\), we define inductively an \defn{\((n+1)\)-fold groupoid} as a groupoid internal to the category of \(n\)-fold groupoids, with \(\C\) simply being the category of \(0\)-fold groupoids.
\end{definition}

\begin{notation}\label{Notation von Neumann}
    We treat a natural number \(n\) as the set of its predecessors, as in the von Neumann construction of the natural numbers: \(n=\{0,\dots,n-1\}\), so that \(i\in n\) means \(0\leq i\leq n-1\). This convention is used in the description of \(n\)-fold groupoids below and in Theorem~\ref{Loday Theorem}.
\end{notation}

As recalled in Section~\ref{section reflection nerve}, in a Mal'tsev category every internal category is automatically a groupoid; moreover, a reflexive graph admits at most one multiplication~\cite{Carboni-Pedicchio-Pirovano}. Being a groupoid is thus a \emph{property} of the underlying reflexive graph, rather than additional structure on it. Since categories of \(n\)-fold groupoids in \(\C\) are themselves finitely complete Mal'tsev categories---by induction, they are closed under finite limits in a category of diagrams in \(\C\)---this observation applies in each of the \(n\) directions of an \(n\)-fold groupoid. Hence no information is lost if we summarise a groupoid \(G\) as the data of two objects \(G_0\) and \(G_1\), together with two regular epimorphisms \(d_0\), \(d_1\colon G_1\to G_0\) with a common section \(s_0\colon G_0\to G_1\): an \(n\)-fold groupoid may then be seen as a cube of dimension \(n\) in \(\C\), with edges being two regular epimorphisms with a common section, satisfying certain conditions. Every vertex can be identified as an object \(G_t\) where \(t\) is an \(n\)-tuple containing \(0\) and \(1\). This notation is fixed as follows: if \(G\coloneq({X\two Y})\) is an \((n+1)\)-fold groupoid, then every object \(G_t\) in the \(n\)-fold groupoid \(X\) has \(1\) as its last coordinate, while every object \(G_t\) in the \(n\)-fold groupoid \(Y\) has \(0\) as its last coordinate. For example, if \(G\coloneq({X\two Y})\) is a double groupoid, then we have the following diagram:
\[
    \xymatrix{G_{11} \ar@<.5ex>[r] \ar@<-.5ex>[r] \ar@<.5ex>[d] \ar@<-.5ex>[d] & G_{10} \ar@<.5ex>[d] \ar@<-.5ex>[d] \\
        G_{01} \ar@<.5ex>[r] \ar@<-.5ex>[r] & G_{00}}
\]
Here \(X\) is on the left-hand side, and \(Y\) on the right-hand side. For an \(n\)-fold groupoid \(G\), the two structure maps going from \(G_{1\cdots 1}\) to \(G_t\), where \(t\) is the \(n\)-tuple obtained from \(1\cdots 1\) by replacing the \(i\)-th coordinate with \(0\), are denoted by \(d^i_0\) and \(d^i_1\) for \(i\in n\).
\begin{definition}\label{Definition Higher Nerve}
    The \defn{nerve} of an \(n\)-fold groupoid \(G\) is the \(n\)-fold simplicial object defined inductively as follows:
    \begin{itemize}
        \item if \(n=1\), then \(\Nerve^1(G)\) is as in the beginning of Section~\ref{section reflection nerve};
        \item if \(n\geq 1\), and \(G\coloneq({X\two Y})\) is an \((n+1)\)-fold groupoid, then taking nerves of the \(n\)-fold groupoids \(X\) and \(Y\) gives a groupoid of \(n\)-fold simplicial objects, and we define \(\Nerve^{n+1}(G)\) as the nerve of this groupoid.
    \end{itemize}
\end{definition}

\begin{remark}\label{Remark Nerve Symmetry}
    Definition~\ref{Definition Higher Nerve} treats the directions of an \((n+1)\)-fold groupoid in a chosen order, the direction of the outer groupoid \({X\two Y}\) being dealt with last; up to canonical isomorphism, however, the result is independent of this choice. It suffices that two consecutive directions may be interchanged, which is the case of a double groupoid \(G\), possibly internal to a category of multiple groupoids. Here the two possible orders yield, in each double degree \((p,q)\), one and the same object of \(p\times q\) grids of composable squares in \(G\): the nerve functor is constructed out of finite limits, hence preserves them, while limits of groupoids and of simplicial objects are computed levelwise; so a comparison of universal properties makes the two double simplicial objects canonically isomorphic, with corresponding face and degeneracy operators. We use this symmetry implicitly in Section~\ref{section induction}, where homology is taken in the various directions of a multiple groupoid, one by one in turn. A symmetry of the same kind for higher coverings is established in~\cite{RenaudVdL}.
\end{remark}

Lemma~\ref{Lemma Nerve exact} and
Proposition~\ref{Birkhoff Reflector Nerve} now extend as follows.

\begin{lemma}\label{Lemma Higher Nerve Exact}
    For each \(n\geq 1\), the \(n\)-fold nerve functor \(\Nerve^{n}\), from \(n\)-fold groupoids in \(\C\) to \(n\)-fold simplicial objects in \(\C\), preserves limits and regular epimorphisms; in particular, it is exact.
\end{lemma}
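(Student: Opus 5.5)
We argue by induction on \(n\), the case \(n=1\) being Lemma~\ref{Lemma Nerve exact}. For the inductive step, let \(G=(X\two Y)\) be an \((n+1)\)-fold groupoid, viewed as an internal groupoid in the category \(\Grpd^{n}(\C)\) of \(n\)-fold groupoids. By Definition~\ref{Definition Higher Nerve}, \(\Nerve^{n+1}(G)\) is obtained in two stages: first apply \(\Nerve^{n}\) to \(X\) and \(Y\) and to the structure maps, giving an internal groupoid \(\Nerve^{n}(G)\) in \(n\)-fold simplicial objects; then take its ordinary nerve in that category. Thus \(\Nerve^{n+1}\) is the composite of the functor \(\Grpd(\Nerve^{n})\colon{\Grpd(\Grpd^{n}(\C))\to\Grpd(\s^{n}\C)}\) with the nerve \(\Grpd(\s^{n}\C)\to\s(\s^{n}\C)=\s^{n+1}\C\). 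It suffices to show that each factor preserves limits and regular epimorphisms.

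For the first factor, \(\Nerve^{n}\) preserves limits by induction, hence sends internal groupoids to internal groupoids; limits in categories of internal groupoids are computed levelwise (as recalled before Lemma~\ref{Lemma Nerve exact}, and by induction in each direction for \(\Grpd^{n}(\C)\), closed under finite limits in a diagram category), so \(\Grpd(\Nerve^{n})\) preserves limits. Likewise a regular epimorphism of groupoids in \(\Grpd^{n}(\C)\) is one whose two components are regular epimorphisms, because \(\Grpd(\D)\) is closed under regular quotients in \(\RG(\D)\) for the regular Mal'tsev category \(\D=\Grpd^{n}(\C)\); \(\Nerve^{n}\) preserves these componentwise by induction, so the first factor preserves regular epimorphisms.

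For the second factor, we reuse the proof of Lemma~\ref{Lemma Nerve exact} with \(\C\) replaced by \(\s^{n}\C\). This is a category of presheaves in \(\C\), hence semi-abelian (in particular regular Mal'tsev), with limits and regular epimorphisms computed pointwise. The nerve is built from iterated pullbacks of split epimorphisms, so it preserves limits. A levelwise regular epimorphism between such pullbacks of split epimorphisms induces a regular epimorphism between the pullbacks by \cite[Proposition~4.1]{Bourn2003}, applied in \(\s^{n}\C\) or equivalently pointwise in \(\C\).

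The main point requiring care is checking that regular epimorphisms in \(\Grpd^{n}(\C)\) are exactly the vertexwise regular epimorphisms of the underlying cube, so that the inductive hypothesis applies. We obtain this by induction from the closure of groupoids under regular quotients in reflexive graphs in any regular Mal'tsev category, together with the fact that \(\Grpd^{n}(\C)\) is itself regular Mal'tsev. Exactness then follows, since a functor preserving finite limits and regular epimorphisms between regular categories is exact.
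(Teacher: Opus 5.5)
Your proof is correct and is essentially the paper's: induction on \(n\), factoring \(\Nerve^{n+1}\) as the levelwise application of \(\Nerve^{n}\) followed by Lemma~\ref{Lemma Nerve exact} applied in the semi-abelian category \(\s^{n}\C\). Your extra check that regular epimorphisms of \((n+1)\)-fold groupoids are exactly the componentwise ones, via closure of groupoids under regular quotients in reflexive graphs, makes explicit a point the paper uses without comment.
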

\begin{proof}
    Induction on \(n\), the case \(n=1\) being Lemma~\ref{Lemma Nerve exact}. Since \(\s^{n}\C\) is again semi-abelian, with limits and regular epimorphisms computed levelwise, applying \(\Nerve^{n}\) levelwise takes a groupoid of \(n\)-fold groupoids to a groupoid in \(\s^{n}\C\), preserving limits and regular epimorphisms by the induction hypothesis. By Definition~\ref{Definition Higher Nerve}, \(\Nerve^{n+1}\) is this functor followed by Lemma~\ref{Lemma Nerve exact} for \(\s^{n}\C\).
\end{proof}

\begin{proposition}\label{Proposition Birkhoff Reflector Higher Nerve}
    For any \(n\)-fold groupoid \(C\) in \(\C\), the degree-wise image \(F(C)\) is an \(n\)-fold groupoid in \(\X\), and \(F(\Nerve^{n}(C))\cong\Nerve^{n}(F(C))\).
\end{proposition}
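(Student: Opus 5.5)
We argue by induction on \(n\), mirroring the proof of Lemma~\ref{Lemma Higher Nerve Exact}. The case \(n=1\) is Proposition~\ref{Birkhoff Reflector Nerve}. For the inductive step, let \(C=(X\two Y)\) be an \((n+1)\)-fold groupoid, with \(X\) and \(Y\) \(n\)-fold groupoids, and structure maps \(d_{0},d_{1}\colon X\to Y\) and common section \(s_{0}\colon Y\to X\), all morphisms of \(n\)-fold groupoids.

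First we show that \(F(C)\) is an \((n+1)\)-fold groupoid in \(\X\). By the induction hypothesis, \(F(X)\) and \(F(Y)\) are \(n\)-fold groupoids, and \(F(d_{0})\), \(F(d_{1})\), \(F(s_{0})\) are morphisms between them, so \(F(C)\) is a reflexive graph of \(n\)-fold groupoids. Groupoid structure being a property in a Mal'tsev context, it remains to see that this reflexive graph is multiplicative. The object of composable pairs of \(C\) is the pullback \(X\times_{Y}X\) of the split epimorphism \(d_{0}\) along the split epimorphism \(d_{1}\). Limits of \(n\)-fold groupoids are computed vertexwise in \(\C\), so this is, at each vertex \(t\), a pullback of split epimorphisms (split by the components of \(s_{0}\)) along split epimorphisms. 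Lemma~\ref{Lemma Pullback Preservation Split} then gives \(F(X\times_{Y}X)\cong F(X)\times_{F(Y)}F(X)\) vertexwise, hence as \(n\)-fold groupoids, since limits of those are vertexwise too. The image of the composition \(X\times_{Y}X\to X\) under \(F\) is therefore a multiplication on \(F(X)\two F(Y)\). Since the category of \(n\)-fold groupoids in \(\X\) is Mal'tsev and closed under finite limits in a diagram category, \cite{Carboni-Pedicchio-Pirovano} makes \(F(C)\) an internal groupoid in it, i.e.\ an \((n+1)\)-fold groupoid.

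Next we establish the nerve isomorphism. By Definition~\ref{Definition Higher Nerve}, \(\Nerve^{n+1}(C)=\Nerve(\Nerve^{n}X\two\Nerve^{n}Y)\), a nerve in \(\s^{n}\C\). Write \(D\) for the groupoid \(\Nerve^{n}X\two\Nerve^{n}Y\) in \(\s^{n}\C\). In simplicial degree \(m\) of the outer direction, \(\Nerve(D)_{m}\) is an iterated pullback \(\Nerve^{n}X\times_{\Nerve^{n}Y}\cdots\times_{\Nerve^{n}Y}\Nerve^{n}X\) of split epimorphisms along split epimorphisms. These pullbacks are computed levelwise in \(\s^{n}\C\), and Lemma~\ref{Lemma Higher Nerve Exact} lets us identify them with \(\Nerve^{n}\) of the corresponding pullbacks of \(n\)-fold groupoids. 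Applying \(F\) levelwise, we commute \(F\) past each of these split pullbacks by Lemma~\ref{Lemma Pullback Preservation Split}. We then commute \(F\) past \(\Nerve^{n}\) using the induction hypothesis, which yields \(\Nerve^{n}F(X)\times_{\Nerve^{n}F(Y)}\cdots\times_{\Nerve^{n}F(Y)}\Nerve^{n}F(X)=\Nerve^{n+1}(F(C))_{m}\). The faces and degeneracies are induced by universal properties of these pullbacks from the structure maps of \(C\), so they correspond under these isomorphisms.

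The main obstacle is bookkeeping rather than mathematics: we must ensure that the isomorphisms supplied by the induction hypothesis are natural in the \(n\)-fold groupoid, so that they are compatible with \(d_{0}\), \(d_{1}\), \(s_{0}\) and assemble into an isomorphism of \((n+1)\)-fold simplicial objects. We handle this by noting that all isomorphisms arise as canonical comparison maps from universal properties: each is the comparison \(F(\lim)\to\lim F\) for a split pullback. Such comparisons are natural automatically, and naturality propagates through the induction.
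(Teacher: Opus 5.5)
Your proof is correct and follows essentially the same route as the paper. Both arguments rest on the nerve being built, in each multidegree, from pullbacks of split epimorphisms along split epimorphisms, which \(F\) preserves by Lemma~\ref{Lemma Pullback Preservation Split}, with \cite{Carboni-Pedicchio-Pirovano} then giving the groupoid property. The paper does this in one stroke across all multidegrees, whereas you organise it as an induction on \(n\) along the outer direction, which makes explicit the naturality of the comparison isomorphisms that the paper leaves implicit.
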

\begin{proof}
    In each multidegree, \(\Nerve^{n}(C)\) is an iterated pullback of split epimorphisms along split epimorphisms, all of which \(F\) preserves by Lemma~\ref{Lemma Pullback Preservation Split}; hence \(F(\Nerve^{n}(C))\cong\Nerve^{n}(F(C))\). As in Proposition~\ref{Birkhoff Reflector Nerve}, the reflexive graph \(F(C)\) is then multiplicative in each of its \(n\) directions, hence an \(n\)-fold groupoid since \(\X\) is Mal'tsev~\cite{Carboni-Pedicchio-Pirovano}.
\end{proof}

\section{Loday's theorem}\label{section loday}

As proved by the first author in~\cite{Afsa-Spectral}, mutatis mutandis, Quillen's argument in~\cite{DGQlln1966} leading to two spectral sequences for a double simplicial group is fully valid in arbitrary semi-abelian categories. We use those to prove a categorical version of \emph{Loday's theorem}---essentially Proposition~3.4 in~\cite{Loday}, of which a purely algebraic version is Proposition~14 of~\cite{Donadze-Inassaridze-Porter}---needed in our analysis of \Tsjech\ homology.

\begin{theorem}[Quillen spectral sequences]\label{Quillen Theorem}
    If \(G\) is a double simplicial object in a semi-abelian category, then there are two spectral sequences
    \begin{align*}
        E^{2}_{pq}=H^{h}_{p}H^{v}_{q}G & \To H_{p+q}(\Diag G), \\
        E^{2}_{pq}=H^{v}_{p}H^{h}_{q}G & \To H_{p+q}(\Diag G).
    \end{align*}
    Here \(\Diag G\) denotes the diagonal of \(G\), and \(H^h_pG\) and \(H^v_qG\) its horizontal and vertical homology, respectively.\noproof
\end{theorem}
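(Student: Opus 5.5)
The theorem is quoted from~\cite{Afsa-Spectral} without proof; if I were to prove it, I would adapt Quillen's argument in~\cite{DGQlln1966} as an exact-couple construction. The construction needs two things available in any semi-abelian category: normalisation, and long exact homology sequences. For the first, the Moore complex of a simplicial object is a proper chain complex, and the normalisation functor \(N\) takes short exact sequences of simplicial objects to short exact sequences of chain complexes. For the second, a short exact sequence \(0\to K\to S\to Q\to 0\) of simplicial objects induces a long exact sequence in homology~\cite{EverVdL2}. I would treat the first spectral sequence in detail and obtain the second by symmetry, transposing \(G\); this leaves \(\Diag G\) unchanged.

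For the first spectral sequence, regard \(G\) as a simplicial object in \(\s\C\) along the horizontal direction. I would filter it by horizontal skeleta: \(F_{p}G\subseteq G\) is the image of all horizontal degeneracies applied to columns of degree at most \(p\). In a semi-abelian category these images exist and are computed degree-wise. The first thing to check is that \(F_{p-1}G\to F_{p}G\) is a normal monomorphism of bisimplicial objects. I would prove this with generalised elements, using that a subobject closed under the simplicial operators and generated by degeneracies is the kernel of the induced quotient. Applying \(\Diag\), which is exact, gives short exact sequences \(0\to\Diag F_{p-1}G\to\Diag F_{p}G\to\Diag(F_{p}G/F_{p-1}G)\to 0\). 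Their long exact homology sequences assemble into an exact couple. The filtration is finite in each total degree, because \(\Diag F_{p}G\) agrees with \(\Diag G\) in degrees \(\leq p\). Hence the spectral sequence converges to \(H_{p+q}(\Diag G)\).

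The key identification is the \(E^{1}\)-term. Write \(N^{h}_{p}G\) for the simplicial object obtained by normalising \(G\) horizontally in degree \(p\), with its vertical structure. The quotient \(F_{p}G/F_{p-1}G\) is generated, via horizontal degeneracies, by \(N^{h}_{p}G\) sitting in column \(p\). I would show that \(H_{p+q}\Diag(F_{p}G/F_{p-1}G)\cong H^{v}_{q}(N^{h}_{p}G)\), a Dold--Puppe-type suspension isomorphism. Granting this, \(E^{1}_{pq}=H^{v}_{q}N^{h}_{p}G\), and \(d^{1}\) is induced by the horizontal differential. Vertical homology commutes with horizontal normalisation, since normalisation is exact and the Moore complex splits off the degenerate part. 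So \(E^{1}\) is the horizontal Moore complex of \(H^{v}_{q}G\), and \(E^{2}_{pq}=H^{h}_{p}H^{v}_{q}G\).

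I expect the suspension isomorphism to be the main obstacle. In the abelian case it follows from the Eilenberg--Zilber theorem, which is unavailable here. The first thing I would try is induction on \(p\) via décalage. Shifting the horizontal direction by one produces a short exact sequence whose middle term is contractible, using the extra degeneracy and the fact that simplicially homotopic maps agree in homology~\cite{EverVdL2}. The connecting isomorphism then lowers \(p\) by one and shifts total degree by one. This reduces everything to \(p=0\), where the quotient is vertically constant in the horizontal direction and the diagonal is simply \(N^{h}_{0}G\). The remaining care is to check that contractibility and the connecting morphisms are compatible with the normal-monomorphism structure in the non-additive setting.
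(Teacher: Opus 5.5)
There is a genuine gap at the first step you list. Outside the additive setting the skeletal filtration is not a filtration by normal subobjects. So the claim that \(F_{p-1}G\to F_{p}G\) is a normal monomorphism is false, and no generalised-element argument can prove it.

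Take a non-abelian group \(A\) and let \(G\) be the vertically constant double simplicial group whose rows are the \Tsjech\ complex of \(A\to 0\), so that \(G_{n,m}=A^{n+1}\). In horizontal degree \(1\), \(F_{0}G\) is the image of \(s_{0}\colon{A\to A\times A}\), which is the diagonal. Conjugating \((a,a)\) by \((b,1)\) gives \((bab^{-1},a)\), so the diagonal is normal only when \(A\) is abelian.

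The consequences are as follows.
\begin{itemize}
\item The sequences \(0\to\Diag F_{p-1}G\to\Diag F_{p}G\to\Diag(F_{p}G/F_{p-1}G)\to 0\) are not short exact.
\item The long exact homology sequences you want to assemble into an exact couple are therefore not available.
\item Passing to normal closures changes the filtration quotients, and you lose the identification \(E^{1}_{pq}=H^{v}_{q}N^{h}_{p}G\).
\item Your d\'ecalage argument for the suspension isomorphism rests on these same quotients.
\end{itemize}
Relatedly, no degenerate part ``splits off'' here, since there is no Dold--Kan decomposition. The commutation \(H^{v}_{q}N^{h}_{p}\cong N^{h}_{p}H^{v}_{q}\) does hold, but only because \(N^{h}_{p}\) is an exact functor.

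This failure is exactly why the abelian skeletal-filtration and Eilenberg--Zilber proof does not transfer. The paper, following Quillen, instead builds the exact couple from the Moore complex of \(G\) in one direction, together with the long exact homology sequences in the other. The terms of that Moore complex are intersections of kernels, hence normal subobjects, and no suspension isomorphism is needed.

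Even with an exact couple in hand, you also skip the one point the paper singles out as genuinely non-abelian. In a semi-abelian category an exact couple does not automatically yield a spectral sequence. Forming \(E^{r+1}\) as the kernel of \(d^{r}\) modulo its image, and keeping the derived couple exact, requires the differentials to be normal morphisms. The paper secures this by noting that homology objects of simplicial objects are abelian in positive degrees~\cite[Theorem~5.5]{EverVdL2}. Hence every second-page entry away from the origin is abelian, and every morphism between abelian objects is normal. Any repaired version of your argument would need this step as well.
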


Only the first of the two is used in what follows.

The proof follows Quillen's: the spectral sequence is derived from the exact couple which the Moore complex builds out of the long exact homology sequences of \(G\), read as a simplicial object in simplicial objects; see~\cite{Afsa-Spectral} for the details. The single point at which the semi-abelian setting genuinely differs from the abelian one is that the passage from an exact couple to a spectral sequence is no longer automatic---it requires the differentials to be \emph{normal} morphisms. This normality is secured by the fact that the homology objects of a simplicial object are abelian in positive degrees~\cite[Theorem~5.5]{EverVdL2}: every entry of the second page away from the origin is then an abelian object, and a morphism between abelian objects is normal, since its image is a subobject of an abelian object, and any such subobject is normal~\cite[Corollary~3.2.17]{Borceux-Bourn}; cf.~\cite[Proposition~2.11]{Afsa-Spectral}. Convergence is automatic, the spectral sequence being a first-quadrant one. Recall that convergence means that there is a filtration
\[
    0=F_{-1}H_n\leq F_0H_n\leq F_1H_n\leq \cdots \leq F_nH_n= H_n(\Diag G)
\]
by (normal) subobjects of \(H_n(\Diag G)\)---which, in the degrees \(n\geq 1\) relevant to us, is an abelian object~\cite{EverVdL2}, so that all of its subobjects are normal---together with natural isomorphisms \(E^\infty_{pq}\cong F_pH_{p+q}/F_{p-1}H_{p+q}\), where \(n=p+q\). Each \(E^{\infty}_{pq}\) is in turn a subquotient of the corresponding \(E^{2}\)-entry, so the graded pieces of \(H_n(\Diag G)\) are subquotients of the second page.

We shall apply this in the following situation: the second page vanishes outside the block of bidegrees \((p,q)\) with \(0\leq p\leq n\) and \(q\in\{0,1\}\). On the diagonal \(p+q=n+1\) the only entry inside the block is then \((n,1)\), and no differential touches it. Indeed, with the usual homological bidegree \(d^{r}_{p,q}\colon{E^{r}_{p+r,\,q-r+1}\to E^{r}_{p,q}}\), a differential out of \((n,1)\) lands in \((n-r,r)\) with \(r\geq 2\), while one into it comes from \((n+r,\,2-r)\), which for \(r\geq 3\) lies outside the first quadrant and for \(r=2\) is the entry \((n+2,0)\)---all outside the block, and each \(E^{r}\)-entry is a subquotient of the corresponding \(E^{2}\)-entry. Hence \(E^{\infty}_{n,1}=E^{2}_{n,1}\) is the only nonzero graded piece of the filtration of \(H_{n+1}(\Diag G)\), so that
\[
    H_{n+1}(\Diag G)\cong E^{2}_{n,1}\text{,}
\]
while for \(j\geq n+2\) the whole \(j\)th diagonal avoids the block, whence \(H_{j}(\Diag G)=0\).

\begin{theorem}[Loday's theorem]\label{Loday Theorem}
    In a semi-abelian category, given any internal \((n+1)\)-fold groupoid \(G\), we have
    \[
        H_{n+1}\Diag(\Nerve^{n+1}(G))=\bigcap_{i\in n+1}(\Ker(d^{i}_{0})\cap \Ker(d^{i}_{1}))
    \]
    and \(H_{j}\Diag(\Nerve^{n+1}(G))=0\) for all \(j\geq n+2\).
\end{theorem}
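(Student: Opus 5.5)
We argue by induction on \(n\), the case \(n=0\) being the statement for a single groupoid \(G=(G_{1}\two G_{0})\): by Proposition~\ref{Proposition Characterisation Nerve}, the Moore complex of \(\Nerve(G)\) vanishes in degrees \(\geq 2\), so \(H_{j}\Nerve(G)=0\) for \(j\geq 2\). In degree one, \(N_{1}=\Ker(d_{0})\) and the image of \(N_{2}=0\) is trivial, so \(H_{1}\Nerve(G)=\Ker(d_{0})\cap\Ker(d_{1})\). Here the diagonal of a \(1\)-fold simplicial object is the object itself.

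For the inductive step, write the \((n+1)\)-fold groupoid as \(G=(X\two Y)\). By Definition~\ref{Definition Higher Nerve} and Remark~\ref{Remark Nerve Symmetry}, we regard \(\Diag\Nerve^{n+1}(G)\) as the diagonal of a double simplicial object \(D\). Its horizontal direction is the nerve in the last (outer) direction. In each horizontal degree \(p\), the vertical column is \(\Diag\Nerve^{n}(G^{(p)})\), where \(G^{(p)}\) is the \(n\)-fold groupoid of \(p\)-simplices of the outer nerve (\(G^{(0)}=Y\), \(G^{(1)}=X\), and higher terms are iterated pullbacks). The diagonal of \(D\) is the full diagonal, because taking diagonals may be done in stages.

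We apply the first Quillen spectral sequence, \(E^{2}_{pq}=H^{h}_{p}H^{v}_{q}D\), from Theorem~\ref{Quillen Theorem}.

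\textbf{Vertical homology.} By induction, \(H^{v}_{q}\) vanishes for \(q\geq n+1\) and equals an intersection of kernels for \(q=n\). This bound is wrong in the orientation we need, since it leaves the block \(q\leq n\) rather than \(q\in\{0,1\}\). So we reverse the roles: we take the vertical direction to be the single outer groupoid direction, and the horizontal direction to be the diagonal of the remaining \(n\) directions.

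\textbf{Inner homology.} Each column is then the nerve of a single groupoid in the category of \(n\)-fold simplicial objects, evaluated in each \(n\)-fold degree and then diagonalised. Since the diagonal is exact, Proposition~\ref{Proposition Characterisation Nerve} gives \(H^{v}_{q}=0\) for \(q\geq 2\). We have \(H^{v}_{1}=\Diag\Nerve^{n}(K)\), where \(K\) is the \(n\)-fold groupoid \(\Ker(d^{n}_{0})\cap\Ker(d^{n}_{1})\). This \(K\) is a sub-\(n\)-fold groupoid of \(X\), because kernels are limits and \(n\)-fold groupoids are closed under limits. The identification \(H^{v}_{1}=\Diag\Nerve^{n}(K)\) uses Lemma~\ref{Lemma Higher Nerve Exact} to commute \(\Nerve^{n}\), and the diagonal, with the finite limit \(H_{1}\). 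Finally, \(H^{v}_{0}\) is the diagonal of the nerve of the \(n\)-fold groupoid \(\coeq(d^{n}_{0},d^{n}_{1})\), which is computed levelwise and is an \(n\)-fold groupoid since groupoids are closed under regular quotients. Here Lemma~\ref{Lemma Exact Functor Groupoid Homology}, applied to the exact functors \(\Diag\circ\Nerve^{n}\) and evaluation, commutes \(H_{0}\) past the nerve.

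\textbf{Outer homology.} The induction hypothesis applied to these two \(n\)-fold groupoids gives \(H^{h}_{p}H^{v}_{q}=0\) for \(p\geq n+1\) and \(q\in\{0,1\}\). It also gives \(E^{2}_{n,1}=\bigcap_{i\in n}(\Ker(d^{i}_{0})\cap\Ker(d^{i}_{1}))\) computed inside \(K\), which equals \(\bigcap_{i\in n+1}(\Ker(d^{i}_{0})\cap\Ker(d^{i}_{1}))\) in \(G_{1\cdots1}\). The \(d^{i}\) of \(K\) are restrictions of those of \(G\), so intersecting with \(K_{1\cdots1}=\Ker(d^{n}_{0})\cap\Ker(d^{n}_{1})\) yields exactly this.

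\textbf{Conclusion.} The second page therefore vanishes outside the block \(0\leq p\leq n\), \(q\in\{0,1\}\). The degeneration argument stated just before the theorem then gives \(H_{n+1}\cong E^{2}_{n,1}\) and \(H_{j}=0\) for \(j\geq n+2\).

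The main obstacle is the careful identification of \(H^{v}_{0}\) and \(H^{v}_{1}\) as diagonals of nerves of \(n\)-fold groupoids. This requires exactness and levelwise computation of the nerve and diagonal functors (Lemma~\ref{Lemma Higher Nerve Exact}, Lemma~\ref{Lemma Exact Functor Groupoid Homology}), and the verification that the isomorphism \(H_{n+1}\cong E^{2}_{n,1}\) is realised by the canonical inclusion, so that the stated equality holds as subobjects of \(G_{1\cdots1}\).
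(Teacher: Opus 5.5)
Your proposal is correct and, once you take the outer groupoid direction as vertical and the diagonal of the inner \(n\)-fold nerve as horizontal, it is exactly the paper's proof. In that orientation the vertical homology is concentrated in rows \(q\in\{0,1\}\) by Proposition~\ref{Proposition Characterisation Nerve}, these rows are identified with \(\Diag\Nerve^{n}\) of the \(n\)-fold groupoids \(H_{0}G\) and \(H_{1}G\) via exactness of \(\Diag\circ\Nerve^{n}\) and Lemma~\ref{Lemma Exact Functor Groupoid Homology}, and the induction hypothesis applied to those two \(n\)-fold groupoids feeds the degeneration argument preceding the theorem; the abandoned first orientation is only a detour and can be dropped.
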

\begin{proof}
    We prove the result by induction on \(n\geq 0\). The case \(n=0\) being clear, we take \(n\geq 1\), assume that
    \[
        H_{n}\Diag(\Nerve^{n}(X))=\bigcap_{i\in n}(\Ker(d^{i}_{0})\cap \Ker(d^{i}_{1}))
    \]
    and \(H_{k}\Diag(\Nerve^{n}(X))=0\) for all \(k\geq n+1\), for any \(n\)-fold groupoid \(X\) in any semi-abelian category, and prove the result for a groupoid
    \[
        G=(d_{0},d_{1}\colon{X\two Y})
    \]
    of \(n\)-fold groupoids \(X\), \(Y\) in a semi-abelian category \(\C\). We consider its nerve as a ``vertical'' simplicial object of \(n\)-fold groupoids also written \(G\), so that \({G_{1}=X}\) and \(G_{0}=Y\). On the other hand, applying \(\Diag\circ \Nerve^{n}\) to the objects of the simplicial object~\(G\), we find a ``horizontal'' simplicial object \(\Diag(\Nerve^{n}(G_{j}))\) in~\(\C\), one in each degree \(j\) of the simplicial \(n\)-fold groupoid \(G\). We thus obtain the double simplicial object \(S\) in \(\C\) with \(S_{ij}=\Diag(\Nerve^{n}(G_{j}))_{i}\): its \(j\)th row \((S_{ij})_{i}\) is the simplicial object \(\Diag(\Nerve^{n}(G_{j}))\), while its vertical structure is obtained by applying \(\Diag\circ\Nerve^{n}\) to the faces and degeneracies of the simplicial object \(G\) and taking degree-\(i\) components. Note that its diagonal is precisely the simplicial object \(\Diag (\Nerve^{n+1}(G))\) in whose \((n+1)\)st homology we are interested.

    Further note that for each index \(i\) we have a reflexive graph \(S_{i1}\two S_{i0}\). First of all, this reflexive graph is a groupoid, since the functor \(\Diag\circ \Nerve^{n}\) preserves all finite limits. In fact, it is an exact functor between semi-abelian categories---the category of \(n\)-fold groupoids in a semi-abelian category being again semi-abelian, for \(n=1\) by~\cite{Bourn-Gran} and in general by induction~\cite[Section~1]{EG-honfg}---as a composite of two functors which are exact (Lemma~\ref{Lemma Higher Nerve Exact} for \(\Nerve^{n}\), while the diagonal \(\Diag\) is exact because limits and colimits in a functor category are pointwise). For the same reason, the ``vertical'' simplicial object \(S^{v}_{i}=(S_{ij})_{j}\) is its nerve. As a consequence of Proposition~\ref{Proposition Characterisation Nerve}, the Moore complex of \((S_{ij})_{j}\) vanishes in degrees greater than or equal to 2.

    Taking homology vertically therefore gives a first page concentrated in the rows \(q\in\{0,1\}\), the entries \(H_{0}S^{v}_{i}\) and \(H_{1}S^{v}_{i}\) being the homologies of the groupoid \({S_{i1}\two S_{i0}}\). Taking homology horizontally, we obtain the second page,
    \[
        \xymatrix@=1em{
        \ddots & \vdots & \vdots & & \vdots &\vdots&\vdots\\
        \cdots & 0 & 0 & \cdots & 0 & 0 & 0\\
        \cdots & 0 & E^{2}_{n1} & \cdots & E^{2}_{21} & E^{2}_{11} & E^{2}_{01}\\
        \cdots & 0 & E^{2}_{n0} & \cdots & E^{2}_{20} & E^{2}_{10} & E^{2}_{00}.}
    \]
    The entries outside the block indexed by the elements of \((n+1)\times 2\) are zero because, \(\Diag\circ \Nerve^{n}\) being an exact functor, by Lemma~\ref{Lemma Exact Functor Groupoid Homology} the entry \(E^{2}_{ij}=H_{i}H_{j}S\) may be obtained as \(H_{i}\circ\Diag\circ \Nerve^{n}\) of the \(j\)th homology of the groupoid \(G\)---note that this \(H_{j}G\) is an \(n\)-fold groupoid itself---which by the induction hypothesis vanishes in degrees \(i\geq n+1\). Similarly, \(E^{2}_{n1}\) is \(H_{n}\circ\Diag\circ \Nerve^{n}\) of the first homology \(\Ker(d_{0})\cap \Ker(d_{1})\) of the groupoid \(G=(d_{0},d_{1}\colon{X\two Y})\), which by the induction hypothesis is nothing but the intersection
    \[
        \bigl(\Ker(d^{n}_{0})\cap \Ker(d^{n}_{1})\bigr) \cap \bigcap_{i\in n}\bigl(\Ker(d^{i}_{0})\cap \Ker(d^{i}_{1})\bigr)= \bigcap_{i\in n+1}\bigl(\Ker(d^{i}_{0})\cap \Ker(d^{i}_{1})\bigr).
    \]
    The second page is thus concentrated in the two rows \(q\in\{0,1\}\) and the columns \(0\leq p\leq n\), so that the discussion preceding Theorem~\ref{Loday Theorem} gives \(H_{n+1}\Diag(\Nerve^{n+1}(G))\cong E^{2}_{n,1}\) (which is precisely the intersection computed above) and \(H_{j}\Diag(\Nerve^{n+1}(G))=0\) for \(j\geq {n+2}\).
\end{proof}

\section{Induction in \Tsjech\ homology}\label{section induction}

The aim of this section is to extend the comparison of Theorem~\ref{Theorem Comparison Level 1} from the first degree to all degrees, by an induction along the tower of Galois structures of~\cite{EGVdL}. The underlying mechanism relates the higher \Tsjech\ derived functors of \(F\) to those of its centralisation \(F_{1}\): its first instance, Proposition~\ref{Proposition Induction Level 1}, gives
\[
    (\HH_{2}F(Y)\to 0)\cong\HH_{1}F_{1}(f)
\]
for a projective presentation \(f\) of \(Y\), while the general step takes the form of Proposition~\ref{Proposition Shift}. Since the fundamental groups satisfy the same kind of shift, this yields the main result, Theorem~\ref{Theorem Comparison Level n}, in all degrees.

Besides the degree shift fixed in Convention~\ref{Convention Comonadic Homology}, two pieces of bookkeeping are at work in what follows. First, the argument runs along the tower \(\C\), \(\Ext(\C)\), \(\Ext^{2}(\C)\), \dots\ of~\cite{EGVdL}, descending one level at each step as it passes from \(F_{k}\) to \(F_{k-1}\), while adding one further groupoid direction \(H_{1}\). Second, a first homology object has trivial codomain, and the embedding \(\iota^{k}\) of Notation~\ref{Notation iota} merely records the level \(k\) at which such an object is read; it may be taken as the annotation ``at level~\(k\)'' and ignored on a first reading.

\subsection{Higher \Tsjech\ derived functors}

Suppose that \(\C\) is a semi-abelian category with enough projectives and that \(F\colon{\C\to \X}\) is a functor to a homological category~\(\X\). The higher \Tsjech\ derived functors \(\HH_{n+1}F(Y)\) are obtained inductively as follows. We already know that \(\HH_{1}F(Y)\) is the first homology of \(F(\Cech (f))\), which is the first homology of the groupoid \(F(\R{f})\two F(X)\), where \(f\colon {X\to Y}\) is a projective presentation of \(Y\).

\begin{definition}\label{Definition Higher Presentation}
    A \defn{\(0\)-fold projective presentation} of \(Y\) is the object \(Y\) itself. For \(n\geq 0\), an \defn{\((n+1)\)-fold projective presentation} of \(Y\) is an \((n+1)\)-extension~\(P\), viewed as an extension \(P\colon{\Dom P\to\Cod P}\) in \(\Ext^{n}(\C)\), whose codomain \(\Cod P\) is an \(n\)-fold projective presentation of \(Y\) and whose domain \(\Dom P\) is extension-projective in the sense of~\cite[Definition~5.4]{EGVdL}---equivalently~\cite[Lemma~5.5]{EGVdL}, every object of \(\C\) occurring as a vertex of the \(n\)-cube \(\Dom P\) is projective. In particular, a \(1\)-fold projective presentation is a projective presentation \(f\colon{X\to Y}\) in the sense used so far.
\end{definition}

In the varietal context of Convention~\ref{Convention Comonadic Homology} (and, more generally, whenever each category \(\Ext^{k}(\C)\) has enough \(\E^{k}\)-projectives) every object \(Y\) admits an \((n+1)\)-fold projective presentation for every \(n\); a canonical one is provided by the comonadic resolution~\cite[Corollary~5.7]{EGVdL}. The same construction, one level at a time, extends any \(k\)-fold projective presentation of \(Y\) to an \((n+1)\)-fold one; in particular, any projective presentation of \(Y\) is an edge of a \(2\)-fold projective presentation of \(Y\), as in Diagram~\eqref{Blokske} below.

We adopt the following conventions for the multi-directional constructions, using the direction labels of Section~\ref{section higher galois}. Given an \((n+1)\)-fold projective presentation \(P\) of \(Y\), applying the kernel-pair functor \(\Eq\) in each of the \(n+1\) directions \(i\in n+1\) produces the \((n+1)\)-fold groupoid \(\Eq^{n+1}(P)\), whose projections in the \(i\)th direction are written \(d^{i}_{0}\) and \(d^{i}_{1}\); applying the \Tsjech\ functor \(\Cech\) in each direction produces the \((n+1)\)-fold simplicial object \(\Cech^{n+1}(P)\), which by Lemma~\ref{Cech Is Nerve} is the \((n+1)\)-fold nerve \(\Nerve^{n+1}(\Eq^{n+1}(P))\). Finally, the \defn{iterated first homology} \(H_{1}^{n+1}\) is obtained by taking the first homology \(H_{1}\) in each of the \(n+1\) groupoid directions in turn; by Remark~\ref{Remark Nerve Symmetry}, up to canonical isomorphism, the result does not depend on the order in which the directions are treated.

By definition, the object \(\HH_{n+1}F(Y)\) is the \((n+1)\)st homology of the diagonal of \(F(\Cech^{n+1}(P))\), for a chosen \((n+1)\)-fold projective presentation \(P\) of \(Y\); that the choice is immaterial is the content of Corollary~\ref{Corollary Higher Cech Well Defined} below. Combining Lemma~\ref{Cech Is Nerve} and Proposition~\ref{Proposition Birkhoff Reflector Higher Nerve} with Theorem~\ref{Loday Theorem} tells us that \(\HH_{n+1}F(Y)\) can be obtained as
\[
    \bigcap_{i\in n+1}(\Ker(F(d^{i}_{0}))\cap \Ker(F(d^{i}_{1})))
\]
where the \(d_{0}^{i}\), \(d_{1}^{i}\) are the projections of the \((n+1)\)-fold groupoid \(F(\Eq^{n+1}(P))\). This may be seen as the result of successively taking first homologies, each time on a groupoid pointing in a different direction, so that we obtain the following higher-order version of Proposition~\ref{Proposition HH_1 as Homology of a Groupoid}:

\begin{proposition}\label{Proposition HH_n+1 as Homology of a Groupoid}
    For any \((n+1)\)-fold projective presentation \(P\) of an object~\(Y\) we have that
    \[
        \HH_{n+1}F(Y)\cong H_{1}^{n+1}F(\Eq^{n+1}(P))\text{,}
    \]
    the left-hand side being formed from \(P\).
    \noproof
\end{proposition}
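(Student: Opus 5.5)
The plan is to chain together results already established, in three steps.

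First, identify the simplicial object. By definition, \(\HH_{n+1}F(Y)\) is \(H_{n+1}\Diag(F(\Cech^{n+1}(P)))\). Applying Lemma~\ref{Cech Is Nerve} in each of the \(n+1\) directions gives \(\Cech^{n+1}(P)\cong\Nerve^{n+1}(\Eq^{n+1}(P))\). Here one checks that taking kernel pairs direction by direction yields an \((n+1)\)-fold groupoid whose multiple nerve is the iterated \Tsjech\ complex. Both constructions are built from finite limits computed levelwise, so this is a direct comparison of universal properties, as in Remark~\ref{Remark Nerve Symmetry}. Proposition~\ref{Proposition Birkhoff Reflector Higher Nerve} then shows that \(F(\Eq^{n+1}(P))\) is an \((n+1)\)-fold groupoid in \(\X\), and that \(F(\Nerve^{n+1}(\Eq^{n+1}(P)))\cong\Nerve^{n+1}(F(\Eq^{n+1}(P)))\). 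The diagonal functor commutes with these isomorphisms.

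Second, compute the homology of the diagonal. Theorem~\ref{Loday Theorem}, applied in \(\X\), gives
\[
H_{n+1}\Diag\bigl(\Nerve^{n+1}(F(\Eq^{n+1}(P)))\bigr)\cong\bigcap_{i\in n+1}\bigl(\Ker(F(d^{i}_{0}))\cap\Ker(F(d^{i}_{1}))\bigr).
\]
This requires \(\X\) to be semi-abelian. A Birkhoff subcategory of a semi-abelian category is semi-abelian, being closed under subobjects and regular quotients and reflective.

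Third, identify this intersection with \(H_{1}^{n+1}F(\Eq^{n+1}(P))\). I would argue by induction on the number of directions. For a groupoid \(G=(X\two Y)\) of \(n\)-fold groupoids, \(H_{1}G=\Ker(d_{0})\cap\Ker(d_{1})\) is computed in the category of \(n\)-fold groupoids, where finite limits are levelwise. So \(H_{1}G\) is an \(n\)-fold groupoid whose top vertex is \(\Ker(d^{n}_{0})\cap\Ker(d^{n}_{1})\) at \(X_{1\cdots1}\), and whose structure maps are restrictions of those of \(X\). Applying \(H_{1}^{n}\) to it and using the induction hypothesis intersects with the remaining kernels \(\Ker(d^{i}_{0})\cap\Ker(d^{i}_{1})\) for \(i\in n\). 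This is exactly the bookkeeping already present in the proof of Theorem~\ref{Loday Theorem}, where \(E^{2}_{n1}\) was identified with this intersection. Remark~\ref{Remark Nerve Symmetry} ensures that the order of the directions does not matter.

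I expect the main obstacle to be the first step, namely matching \(\Cech^{n+1}(P)\) with the iterated nerve of \(\Eq^{n+1}(P)\) compatibly with the chosen ordering of directions. Once that is done, the rest is a direct assembly of Proposition~\ref{Proposition Birkhoff Reflector Higher Nerve} and Theorem~\ref{Loday Theorem}.
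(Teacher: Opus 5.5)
Your proposal is correct and follows essentially the same route as the paper. The paper obtains the statement by combining Lemma~\ref{Cech Is Nerve}, Proposition~\ref{Proposition Birkhoff Reflector Higher Nerve} and Theorem~\ref{Loday Theorem}, and then reading the resulting intersection of kernels as an iterated first homology taken one direction at a time; your remark that \(\X\) is semi-abelian (so that Loday's theorem applies there) and your induction for \(H_{1}^{n+1}\) spell out steps the paper leaves implicit.
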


It remains to be seen that the right-hand side (and with it the \Tsjech\ derived functor \(\HH_{n+1}F\) itself) does not depend on the chosen \((n+1)\)-fold projective presentation \(P\) of \(Y\). The degree-one argument of Lemma~\ref{Lemma Homotopically Equivalent} does not lift: a morphism \(\phi\colon{P\to P'}\) of \((n+1)\)-fold projective presentations over \(Y\) is the identity at the corner \(Y\) alone, so that a composite \(\psi\phi\) is an endomorphism of \(P\) which is nowhere else an identity, and \(\Eq^{n+1}(\psi\phi)\) admits no internal natural transformation to the identity in any one of the \(n+1\) groupoid directions. We therefore obtain the independence in the opposite order, as a consequence of the comparison with comonadic homology rather than as a preliminary to it (Corollary~\ref{Corollary Higher Cech Well Defined} below): the induction of the present section starts from an \emph{arbitrary} \((n+1)\)-fold projective presentation, while the comonadic homology object it arrives at involves no choice at all. The same order of deduction is followed in~\cite{EGVdL} for the higher Hopf formulae.

\begin{remark}\label{Remark Classical Hopf Formula}
    In the case of abelianisation, the higher \Tsjech\ derived functors \(\HH_{n}\ab\) recover the higher Hopf formulae of~\cite{Donadze-Inassaridze-Porter}; the role played there by a direct computation from an \(n\)-fold presentation is here taken over by our categorical version of Loday's theorem (Theorem~\ref{Loday Theorem}).
\end{remark}

\subsection{Case study: \texorpdfstring{\(n=2\)}{n=2}}

It follows from Theorem~\ref{Loday Theorem} that, if a \(2\)-fold equivalence relation with projections \(d^{i}_{0}\), \(d^{i}_{1}\) is induced by a \(2\)-fold presentation of an object \(Z\) as in
\begin{equation}\label{Blokske}
    \vcenter{\xymatrix{\R{d}\square \R{c} \ar@<.5ex>[r]^-{d^{1}_1} \ar@<-.5ex>[r]_-{d^{1}_0} \ar@<.5ex>[d]^-{d^{0}_1} \ar@<-.5ex>[d]_-{d^{0}_0} & \R{d} \ar[r]^-{p} \ar@<.5ex>[d]^-{d_1} \ar@<-.5ex>[d]_-{d_0} & \R{g} \ar@<.5ex>[d]^-{g_{1}} \ar@<-.5ex>[d]_-{g_{0}} \\
    \R{c} \ar@<.5ex>[r]^-{c_1} \ar@<-.5ex>[r]_-{c_0} \ar[d]_-{r} & X \ar[r]^-{c} \ar[d]_-{d} & C \ar[d]^-{g}\\
    \R{f} \ar@<.5ex>[r]^-{f_{1}} \ar@<-.5ex>[r]_-{f_{0}} & D \ar[r]_-{f} & Z,}}
\end{equation}
then
\[
    \HH_{2}F(Z)=\bigcap_{i\in 2}(\Ker(F(d^{i}_{0}))\cap \Ker(F(d^{i}_{1}))).
\]
The bottom-right square \((c,f)\colon{d\to g}\) in~\eqref{Blokske} is a \(2\)-fold projective presentation of~\(Z\); here \(g\colon{C\to Z}\) is the projective presentation of \(Z\) we resolve. The objects \(\R{c}\), \(\R{d}\), \(\R{f}\) and \(\R{g}\) are the kernel pairs of \(c\), \(d\), \(f\) and \(g\), with projections \(c_{i}\), \(d_{i}\), \(f_{i}\), \(g_{i}\) for \(i\in 2\); \(\R{d}\square\R{c}\) is their common kernel pair in the two directions, with projections \(d^{0}_{i}\) towards \(\R{c}\) and \(d^{1}_{i}\) towards \(\R{d}\); and \(p\), \(r\) are the morphisms induced between the kernel pairs. We compare this intersection with \(\HH_{1}F_{1}(g)\).

Applying \(F\) to the top-left double equivalence relation of~\eqref{Blokske} yields an internal double groupoid---Diagram~\eqref{n-Double} below with \(n=1\)---of which, by Theorem~\ref{Loday Theorem}, the object \(\HH_{2}F(Z)\) is \(H_{2}\) of the diagonal of the double nerve. Applying instead \(F_{1}\) to the bottom part of~\eqref{Blokske}, as in Diagram~\eqref{n-H1} below with \(n=1\), and taking kernels to the left, we obtain \(\HH_{1}F_{1}(g)\), whose codomain is trivial since \((\R{f},f_{0},f_{1})\) is an equivalence relation. Here the square \((c,f)\colon{d\to g}\) is a projective presentation of the object \(g\) of \(\Ext(\C)\), a truncation of the \(2\)-fold projective presentation~\eqref{Blokske}, so that \(\HH_{1}F_{1}(g)\) is, in the Galois structure \(\Gamma_{1}\), the first homology of \(F_{1}\) applied to its kernel pair, which is computed degree-wise by Lemma~\ref{Lemma Kernel Pairs Ext}.

That these two intersections coincide is the content of the Key Lemma below (Lemma~\ref{Key Lemma}) in the case \(n=1\): the same chain of isomorphisms, of which the present situation is the prototype, identifies the domain \(\Ker(\overline{c_0})\cap\Ker(\overline{c_1})\) of \(\HH_{1}F_{1}(g)\) with \(\HH_{2}F(Z)\). Together with the identifications above, this gives \((\HH_{2}F(Z)\to 0)\cong\HH_{1}F_{1}(g)\). We thus obtain:

\begin{proposition}\label{Proposition Induction Level 1}
    For any projective presentation \(f\colon{X\to Y}\) of an object \(Y\), there is an isomorphism \((\HH_{2}F(Y)\to 0)\cong\HH_{1}F_{1}(f)\), the left-hand side being formed from any \(2\)-fold projective presentation of \(Y\) with codomain edge \(f\).
    \noproof
\end{proposition}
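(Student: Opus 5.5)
We take a $2$-fold projective presentation of $Y$ with codomain edge $f$, as in Diagram~\eqref{Blokske}, where that edge is called $g\colon C\to Z$. Two identifications are needed. First, Theorem~\ref{Loday Theorem} and Proposition~\ref{Proposition HH_n+1 as Homology of a Groupoid} give $\HH_{2}F(Z)\cong H_{1}^{2}F(\Eq^{2}(P))$, that is, the intersection of the four kernels of $F(d^{i}_{j})$ on $F(\R{d}\square\R{c})$. Second, read the square $(c,f)\colon d\to g$ as a projective presentation of the object $g$ of $\Ext(\C)$. Its domain $d$ is extension-projective, since its vertices $X$ and $D$ are projective. The level-one form of Proposition~\ref{Proposition HH_1 as Homology of a Groupoid}, in the relative reading at the end of Section~\ref{section reflection nerve}, then gives
\[
    \HH_{1}F_{1}(g)=H_{1}\bigl(F_{1}(\R{(c,f)})\two F_{1}(d)\bigr).
\]
Here the kernel pair of $(c,f)$ is the arrow $r\colon\R{c}\to\R{f}$, computed degree-wise by Lemma~\ref{Lemma Kernel Pairs Ext}. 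So the task is to compute the intersection of the kernels, in $\Ext(\C)$, of the two projections $\overline{c_{0}},\overline{c_{1}}\colon F_{1}(r)\to F_{1}(d)$, and to show that it equals $\HH_{2}F(Z)\to 0$.

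The first step deals with the codomain part. The centralisation $F_{1}$ leaves codomains unchanged, so the projections have codomain components $f_{0},f_{1}\colon\R{f}\to D$, which are jointly monic. A kernel in $\Ext(\C)$ of a morphism whose codomain component is monic has trivial codomain. Hence the intersection of the two kernels is of the form $\iota(K)$, with
\[
    K=\Ker(\overline{c_{0}})\cap\Ker(\overline{c_{1}})\cap\Ker(F_{1}(r))\leq F_{1}[r],
\]
where $\overline{c_{0}},\overline{c_{1}}\colon F_{1}[r]\to F_{1}[d]$ now denote the domain components. This already gives the shape $(\,\cdot\,\to0)$ of the statement.

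The second step, which is the heart of the proof, identifies $K$ with the fourfold intersection of kernels on $F(\R{d}\square\R{c})$. I would follow the proof of Lemma~\ref{Lemma Simple Galois Groupoid} one level up.

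\emph{Step (a).} The object $\Ker(F_{1}(r))$ is the kernel of a central extension, and the kernel pair $\R{d}\square\R{c}\two\R{c}$ of $r$ has projections $d^{0}_{0}$, $d^{0}_{1}$. By Lemma~\ref{Lemma Kernel Extension and Kernel Pair Centralisation Comparison}, Lemma~\ref{Lemma Kernel Reflection Centralisation Comparison} (applied to the split epimorphism $d^{0}_{0}$) and the triviality of split central extensions, one gets
\[
    \Ker(F_{1}(r))\cong\Ker(F(d^{0}_{0}))\cap\Ker(F(d^{0}_{1})).
\]
The isomorphism should be induced by $\rho^{1}_{r}$ followed by the unit of $F$. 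Exactly as in Lemma~\ref{Lemma Simple Galois Groupoid}, one checks that both sides are cokernels of the same inclusion of $G_{1}[r]$.

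\emph{Step (b).} Transport the horizontal projections along this identification. The maps $\overline{c_{i}}$ restricted to $\Ker(F_{1}(r))$ should correspond to $F(d^{1}_{i})$ restricted to $\Ker(F(d^{0}_{0}))\cap\Ker(F(d^{0}_{1}))$. This uses the naturality of $\rho^{1}$ and $\eta$ with respect to the double extensions $(c_{i},f_{i})\colon r\to d$, together with Lemma~\ref{Lemma reflection of centralisation}, which says that $F$ inverts $\rho^{1}_{d}$.

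\emph{Step (c).} Intersecting the kernels of these restricted maps then yields the fourfold intersection, which equals $\HH_{2}F(Z)$ by Theorem~\ref{Loday Theorem}.

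I expect the main obstacle to be step~(b). The identification in~(a) is built from the vertical kernel pair, and it must be compatible with the horizontal structure. In particular, one has to see that the kernel of $\overline{c_{i}}$, restricted to $\Ker(F_{1}(r))$, is not enlarged by the passage from $F_{1}$ to $F$. My first approach would be to check that the comparison maps form a morphism of horizontal groupoids. Then the Short Five Lemma, applied to the rows, together with Proposition~\ref{Birkhoff Reflector Nerve}, shows that the kernels of the two projections match.
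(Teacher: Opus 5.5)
\textbf{There is a genuine gap, and it lies in Step~(a), not Step~(b).}

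The lemmas you cite give
\[
\Ker(F_{1}(r))\cong\Ker(F_{1}(d^{0}_{0}))\cong\Ker(F(d^{0}_{0})).
\]
This is the kernel of \emph{one} projection of the Galois groupoid \(F(\R{d}\square\R{c})\two F(\R{c})\) of \(r\). It is not the Galois group \(\Ker(F(d^{0}_{0}))\cap\Ker(F(d^{0}_{1}))\).

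The proof of Lemma~\ref{Lemma Simple Galois Groupoid} shows the same thing: the cokernel of \(G_{1}[f]\leq\Ker(f)\) is \(\Ker(F(\pi_{0}))\). For groups and abelianisation with \(f\colon F\to F/R\), this is \(R/[F,R]\), whereas the Galois group is \((R\cap[F,F])/[F,R]\).

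Your claim already fails inside the hypotheses. Take \(Z=C=D=0\) and \(X\) projective with \(F(X)\neq 0\). Then \(r\colon X\times X\to 0\), so \(\Ker(F_{1}(r))=F(X)\times F(X)\). But \(d^{0}_{0}\) and \(d^{0}_{1}\) are the product projections of \((X\times X)^{2}\), and \(F\) preserves binary products (Lemma~\ref{Lemma Pullback Preservation Split}). Hence the intersection of their \(F\)-kernels is \(0\).

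With the correct identification, your Steps~(b) and~(c) go through. The compatibility you worry about is settled by naturality, as in Lemma~\ref{Lemma Shift Functoriality}. But they yield only the triple intersection
\[
\Ker(F(d^{0}_{0}))\cap\Ker(F(d^{1}_{0}))\cap\Ker(F(d^{1}_{1})).
\]

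\textbf{What is missing is the argument that supplies the fourth kernel.} This is the one place where projectivity of \(C\) enters. Your proposal never uses that hypothesis, which is a warning sign.

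Indeed, take \(Z=D=0\) and let \(c\colon X\to C\) be a projective presentation of a non-projective \(C\). Then the object you compute, \(H_{1}F_{1}(\R{(c,f)})\), is \(\iota\) of the Galois group \(\pi_{1}^{F}(C)\) of \(c\). The fourfold intersection, however, vanishes, because \(\R{d}\square\R{c}=\R{c}\times\R{c}\). So an argument that never uses projectivity of \(C\) proves too much.

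The fix runs as follows.
\begin{enumerate}
\item Since \(C\) is projective, \(c\) is split.
\item Hence the bottom groupoid \(F(\R{c})\two F(X)\) of~\eqref{n-Double} is an equivalence relation. This follows from Proposition~\ref{Cech on Split Epi}, or from Corollary~\ref{Corollary Fundamental Group Projective} with Lemma~\ref{Lemma H_{1} Zero Means Relation}. So \(F(c_{0})\) and \(F(c_{1})\) are jointly monic.
\item For \(x\) in the triple intersection and \(i=0,1\), \[F(c_{i})\bigl(F(d^{0}_{1})(x)\bigr)=F(d_{1})\bigl(F(d^{1}_{i})(x)\bigr)=0.\]
\item Therefore \(F(d^{0}_{1})(x)=0\), and the triple intersection equals the fourfold one.
\end{enumerate}
This is exactly how the paper's argument, the case \(n=1\) of Lemma~\ref{Key Lemma}, proceeds.
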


For the comparison with comonadic homology we also need the shift which the homology objects themselves undergo along the tower; we recall it from~\cite{EGVdL}, in our notation.

\begin{theorem}[{\cite[Theorem~6.4]{EGVdL}}]\label{Theorem EGVdL Shift}
    In the context of Convention~\ref{Convention Comonadic Homology}, for any \(n\geq 1\), \(k\geq 2\) and any projective presentation \(f\) of an \((n-1)\)-extension \(A\), there is an isomorphism \(H_{k}(f,F_{n})\cong \iota H_{k+1}(A,F_{n-1})\).\noproof
\end{theorem}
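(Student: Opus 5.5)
The plan is to compare the two comonadic resolutions directly and extract the shift from two long exact homology sequences, one at each level of the tower. By~\cite[Proposition~5.6]{EGVdL}, the comonad on \(\Ext^{n}(\C)\) is induced by the one on \(\Ext^{n-1}(\C)\). Hence the canonical simplicial resolution of the projective presentation \(f\colon{B\to A}\), viewed as an object of \(\Ext^{n}(\C)\), is a simplicial object of \(n\)-extensions \(p_{\bullet}\colon{P_{\bullet}\to Q_{\bullet}}\). Here the augmented \(Q_{\bullet}\to A\) is the comonadic resolution of \(A\) in \(\Ext^{n-1}(\C)\), the augmented \(P_{\bullet}\to B\) is a resolution of \(B\), and each \(p_{j}\colon{P_{j}\to Q_{j}}\) is an extension between extension-projective objects, hence split. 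By Convention~\ref{Convention Comonadic Homology}, \(H_{k}(f,F_{n})=H_{k-1}(F_{n}(p_{\bullet}))\). Write \(K_{\bullet}\coloneq\Ker(F_{n}(p_{\bullet}))\), taken degree-wise.

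First I would separate off the codomain. Degree-wise there is a short exact sequence in \(\Ext^{n}(\C)\)
\[
    0\to \iota K_{\bullet}\to F_{n}(p_{\bullet})\to 1_{Q_{\bullet}}\to 0\text{,}
\]
where \(1_{Q_{\bullet}}\) is the identity extension of \(Q_{\bullet}\). The simplicial object \(1_{Q_{\bullet}}\) is the identity on the resolution \(Q_{\bullet}\), which is contractible in positive degrees, since an augmented comonadic resolution admits an extra degeneracy. The long exact homology sequence in the homological category \(\Ext^{n}(\C)\) then gives \(H_{m}(F_{n}(p_{\bullet}))\cong \iota H_{m}(K_{\bullet})\) for \(m\geq 1\). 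This is exactly where the restriction \(k\geq 2\) enters, since we need \(m=k-1\geq 1\).

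Next I would identify \(K_{j}\) one level down. Since \(p_{j}\) is split, Lemma~\ref{Lemma Kernel Reflection Centralisation Comparison}, read at level \(n-1\), gives \(K_{j}=\Ker(F_{n}(p_{j}))\cong\Ker(F_{n-1}(p_{j}))\). The morphism \(F_{n-1}(p_{j})\) is a split epimorphism, so degree-wise we get a short exact sequence of simplicial objects
\[
    0\to K_{\bullet}\to F_{n-1}(P_{\bullet})\to F_{n-1}(Q_{\bullet})\to 0\text{.}
\]
The higher homology of the middle term is \(H_{m}(F_{n-1}(P_{\bullet}))=H_{m+1}(B,F_{n-1})\). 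For \(m\geq 1\) this vanishes because \(B\) is extension-projective: the resolution of a projective object is contractible, with \(H_{2}\) vanishing by Corollary~\ref{Corollary Fundamental Group Projective}. The long exact sequence therefore yields \(H_{m}(K_{\bullet})\cong H_{m+1}(F_{n-1}(Q_{\bullet}))=H_{m+2}(A,F_{n-1})\) for \(m\geq 1\), with the connecting morphism giving the isomorphism. One checks the low end separately: at \(m=1\) one needs the \(H_{1}\)-part of the middle term to vanish, and the map on \(H_{0}\) to be injective, which holds since \(F_{n-1}(P_{\bullet})\to F_{n-1}(Q_{\bullet})\) is degree-wise split.

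Combining the two steps with \(m=k-1\),
\[
    H_{k}(f,F_{n})=H_{k-1}(F_{n}(p_{\bullet}))\cong\iota H_{k-1}(K_{\bullet})\cong\iota H_{k+1}(A,F_{n-1})\text{.}
\]
I expect the main obstacle to be the bookkeeping at the boundary of the long exact sequences in the non-abelian setting. One must make sure the connecting morphisms are available in \(\Ext^{n}(\C)\), which is only homological. Here one relies on the homology objects being abelian in positive degrees, as in the discussion following Theorem~\ref{Quillen Theorem}. One must also verify that the first short exact sequence is genuinely degree-wise exact in \(\Ext^{n}(\C)\), where cokernels are supplied by Proposition~\ref{Proposition Colimits Ext} at the relevant level.
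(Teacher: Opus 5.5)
The paper does not prove this statement; it is quoted from \cite[Theorem~6.4]{EGVdL}. Your argument is the standard dimension shift, and it is sound in substance: you resolve $f$ in $\Ext^{n}(\C)$, and you use that the resolution consists of split epimorphisms, so that Lemma~\ref{Lemma Kernel Reflection Centralisation Comparison}, read one level down, trades $\Ker F_{n}$ for $\Ker F_{n-1}$. You then read the shift off two long exact homology sequences, in which the projectivity of $B$ kills the middle term. As far as one can tell, this is in substance the route of the cited source. Three local justifications need repair; none affects the structure.

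\textbf{The resolution of $A$.} The augmented comonadic resolution $Q_{\bullet}\to A$ does not in general admit an extra degeneracy. In degree $-1$ one would exhibit $A$ as a retract of $G_{n-1}A$, hence as projective. What is true, and all you need, is that $Q_{\bullet}\to A$ is aspherical. It becomes contractible after applying the hom-functors out of the objects $G_{n-1}Z$, with extra degeneracy $u\mapsto G_{n-1}(u)\circ\delta_{Z}$ for $\delta$ the comultiplication. Since these objects cover everything, this yields $H_{m}Q_{\bullet}=0$ for $m\geq 1$.

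\textbf{The resolution of $B$.} A genuine extra degeneracy does exist for the \emph{canonical} resolution of a projective object: apply $G_{n-1}^{j+1}$ to a splitting of $\varepsilon_{B}$. That is what you want for $P_{\bullet}\to B$. However, you have not shown that $P_{\bullet}$ is the canonical $G_{n-1}$-resolution of $B$, and this depends on how the induced comonad of \cite[Proposition~5.6]{EGVdL} treats domains. Without it, the equality $H_{m}F_{n-1}(P_{\bullet})=H_{m+1}(B,F_{n-1})$ needs the independence of comonadic homology from the chosen projective resolution. The simplest fix bypasses that equality. As you say, $P_{\bullet}\to B$ is a degreewise projective aspherical resolution of the projective object $B$. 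By the Tierney--Vogel comparison it is therefore homotopy equivalent, over $B$, to the constant simplicial object on $B$. Hence $H_{m}F_{n-1}(P_{\bullet})=0$ for all $m\geq 1$, and the appeal to Corollary~\ref{Corollary Fundamental Group Projective} becomes superfluous.

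\textbf{The low end.} Your remark here is wrong, though harmless. The map $H_{0}F_{n-1}(P_{\bullet})\to H_{0}F_{n-1}(Q_{\bullet})$ is $F_{n-1}(f)$. This is a regular epimorphism but in general not a monomorphism, degreewise splitness notwithstanding: abelianise the projective presentation $\mathbb{Z}\to\mathbb{Z}/2$ of groups. Nor is injectivity needed. For $m=1$, the segment $H_{2}F_{n-1}(P_{\bullet})\to H_{2}F_{n-1}(Q_{\bullet})\to H_{1}K_{\bullet}\to H_{1}F_{n-1}(P_{\bullet})$ has vanishing ends, so it already gives $H_{1}K_{\bullet}\cong H_{3}(A,F_{n-1})$.
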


\begin{theorem}\label{Theorem Comparison Level 2}
    In the varietal context of Convention~\ref{Convention Comonadic Homology} (\(\C\) a semi-abelian variety and \(F\colon{\C\to \X}\) the reflector onto a subvariety) the third homology \(H_{3}(Y,F)\) of an object \(Y\) is isomorphic to \(\HH_{2}F(Y)\).
\end{theorem}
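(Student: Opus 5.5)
The plan is to chain three isomorphisms: Proposition~\ref{Proposition Induction Level 1}, then Theorem~\ref{Theorem Comparison Level 1} one level up, then Theorem~\ref{Theorem EGVdL Shift}. This moves from \(\HH_{2}F(Y)\) to a first \Tsjech\ derived functor of the centralisation functor \(F_{1}\), identifies that with a fundamental group for \(F_{1}\), and then brings it back down to \(H_{3}(Y,F)\).

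First we fix a projective presentation \(f\colon{X\to Y}\). In the varietal context it extends to a \(2\)-fold projective presentation of \(Y\) with codomain edge \(f\), for instance by the comonadic construction of~\cite[Corollary~5.7]{EGVdL}. Proposition~\ref{Proposition Induction Level 1} then gives
\[
(\HH_{2}F(Y)\to 0)\cong\HH_{1}F_{1}(f)\text{,}
\]
with \(\HH_{2}F(Y)\) formed from that presentation.

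Next we work in the Galois structure \(\Gamma_{1}=(\Ext(\C),\CExt(\C),\E^{1},\Z^{1},F_{1},{\subseteq_{1}})\). We apply Theorem~\ref{Theorem Comparison Level 1} in the relative reading set out at the end of Section~\ref{section reflection nerve}. The hypotheses hold here:
\begin{itemize}
\item \(\Ext(\C)\) is homological;
\item \(\CExt(\C)\) is strongly \(\E^{1}\)-Birkhoff in \(\Ext(\C)\);
\item the level-one form of Lemma~\ref{Lemma Pullback Preservation Split} is~\cite[Lemma~4.4]{EGVdL};
\item \(\Ext(\C)\) has enough \(\E^{1}\)-projectives by~\cite[Proposition~5.6]{EGVdL}.
\end{itemize}
This yields \(\HH_{1}F_{1}(f)\cong\pi_{1}^{F_{1}}(f)\). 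By Convention~\ref{Convention Comonadic Homology}, \(\pi_{1}^{F_{1}}(f)=H_{2}(f,F_{1})\).

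Finally, Theorem~\ref{Theorem EGVdL Shift} with \(n=1\), \(k=2\) and \(A=Y\) gives \(H_{2}(f,F_{1})\cong\iota H_{3}(Y,F)\). Composing the three isomorphisms, we get \((\HH_{2}F(Y)\to 0)\cong(H_{3}(Y,F)\to 0)\) in \(\Ext(\C)\). Restricting to domains gives \(H_{3}(Y,F)\cong\HH_{2}F(Y)\). As a by-product, the right-hand side does not depend on the chosen \(2\)-fold presentation.

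The step most likely to need care is the second one, transporting Theorem~\ref{Theorem Comparison Level 1} to level one. We must check that the \Tsjech\ derived functor of \(F_{1}\) at \(f\) really is the first homology of the groupoid \(F_{1}(\R{(c,f)})\two F_{1}(d)\). The point is that the kernel pair in \(\Ext(\C)\) of the double extension \((c,f)\) is computed degree-wise (Lemma~\ref{Lemma Kernel Pairs Ext}). We must also check that Theorem~\ref{Theorem Interpretation Fundamental Group} survives, which relies on Janelidze's weak universality of the centralised projective presentation, valid for abstract pointed Galois structures. A secondary check is that the degree-one isomorphisms are compatible with the \(\iota\) bookkeeping, namely that each object produced has trivial codomain: the domain edge \(\R{f}\) is an equivalence relation, so \(H_{1}\) in that direction vanishes by Lemma~\ref{Lemma H_{1} Zero Means Relation}.
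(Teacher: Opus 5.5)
Your proposal is correct and is the paper's own proof: the chain \((\HH_{2}F(Y)\to 0)\cong\HH_{1}F_{1}(f)\cong\pi_{1}^{F_{1}}(f)=H_{2}(f,F_{1})\cong(H_{3}(Y,F)\to 0)\), obtained from Proposition~\ref{Proposition Induction Level 1}, then Theorem~\ref{Theorem Comparison Level 1} read at level one, then Theorem~\ref{Theorem EGVdL Shift} with \(n=1\), \(k=2\), after which you pass to domains. The only blemish is notational: in your final paragraph you switch to the labels of Diagram~\eqref{Blokske}, where the presentation being resolved is \(g\) and the \(2\)-fold presentation is \((c,f)\colon{d\to g}\), so that \(f\) there is no longer your presentation of \(Y\), and \(\R{f}\) is the codomain of \(r\), not a domain edge.
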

\begin{proof}
    Let \(f\colon{X\to Y}\) be a projective presentation of \(Y\). Combining Proposition~\ref{Proposition Induction Level 1} with Theorem~\ref{Theorem Comparison Level 1}, applied to the reflector \(F_{1}\), and with Theorem~\ref{Theorem EGVdL Shift}, we have that
    \[
        (\HH_{2}F(Y)\to 0)\cong\HH_{1}F_{1}(f)\cong H_{2}(f,F_{1})\cong (H_{3}(Y,F)\to 0),
    \]
    from which the claim follows. The middle isomorphism is Theorem~\ref{Theorem Comparison Level 1} for the reflector \(F_{1}\), which identifies \(\HH_{1}F_{1}(f)\) with the fundamental group
    \[
        \pi_{1}^{F_{1}}(f)=H_{2}(f,F_{1})\text{;}
    \]
    the last is Theorem~\ref{Theorem EGVdL Shift} with \(n=1\) and \(k=2\), the presentation \(f\) being a presentation of the \(0\)-extension \(Y\).
\end{proof}

From the above argument we may extract a technical result which will allow us to obtain a proof of the general case.

\subsection{A key lemma}

Let \(Y\) be an object of \(\C\). For \(n\geq 1\), we let the bottom-right square in Figure~\ref{n-Blokske} be an \((n+1)\)-fold projective presentation \(P\) of \(Y\).
\begin{figure}[tb]
    \(
    \xymatrix{\R{d}\square \R{c} \ar@<.5ex>[r]^-{d^{n}_1} \ar@<-.5ex>[r]_-{d^{n}_0} \ar@<.5ex>[d]^-{d^{n-1}_1} \ar@<-.5ex>[d]_-{d^{n-1}_0} & \R{d} \ar[r]^-{p} \ar@<.5ex>[d]^-{d_1} \ar@<-.5ex>[d]_-{d_0} & \R{g} \ar@<.5ex>[d]^-{g_{1}} \ar@<-.5ex>[d]_-{g_{0}} \\
    \R{c} \ar@<.5ex>[r]^-{c_1} \ar@<-.5ex>[r]_-{c_0} \ar[d]_-{r} & X \ar[r]^-{c} \ar[d]_-{d} & C \ar[d]^-{g}\\
    \R{f} \ar@<.5ex>[r]^-{f_{1}} \ar@<-.5ex>[r]_-{f_{0}} & D \ar[r]_-{f} & Z}
    \)
    \caption{An \((n+1)\)-fold projective presentation \(P\) of \(Y\) with its double kernel pair \(\R{d}\square \R{c}\)}\label{n-Blokske}
\end{figure}
Each entry of the diagram in Figure~\ref{n-Blokske} is an \((n-1)\)-fold extension, so that Figure~\ref{n-Blokske} displays a diagram in \(\Ext^{n-1}(\C)\) whose bottom-right square \(P\) is a \(2\)-fold extension there; the bottom-right corner \(Z\), the codomain of \(P\) in both square directions, is an \((n-1)\)-fold projective presentation of \(Y\), reducing to \(Y\) itself when \(n=1\). The maps are labelled as in~\eqref{Blokske}, save that the projections of the common kernel pair \(\R{d}\square\R{c}\) are now written \(d^{n-1}_{i}\) towards \(\R{c}\) and \(d^{n}_{i}\) towards \(\R{d}\). Applying the functor \(F_{n}\) to the bottom part of Figure~\ref{n-Blokske}, we find the diagram of solid arrows
\begin{equation}\label{n-H1}
    \vcenter{\xymatrix{\Ker(\overline{c_0}) \cap \Ker(\overline{c_1}) \ar@{.>}[d]_-{H_{1}F_{n}(\R{P})} \ar@{{ |>}.>}[r] &
    \overline{\R{c}} \ar@<.5ex>[r]^-{\overline{c_1}} \ar@<-.5ex>[r]_-{\overline{c_0}} \ar[d]_-{F_{n}(r)} & \overline{X} \ar[r]^-{\overline{c}} \ar[d]^-{F_{n}(d)} & \overline{C} \ar[d]^-{F_{n}(g)}\\
    0 \ar@{{ |>}.>}[r] & \R{f} \ar@<.5ex>[r]^-{f_{1}} \ar@<-.5ex>[r]_-{f_{0}} & D \ar[r]_-{f} & Z.}}
\end{equation}
Taking kernels to the left, then taking their intersection, we obtain the first homology object \(H_{1}F_{n}(\R{P})\) of the groupoid \(F_{n}(\R{P})\). Note that its codomain is trivial, since the groupoid \((\R{f},f_{0},f_{1})\) is an equivalence relation.

Taking kernels vertically now, we obtain two groupoids which happen to coincide. On the one hand, there is the internal groupoid \(\Ker(F_{n}(r))\two \Ker(F_{n}(d))\) induced by taking kernels in the diagram~\eqref{n-H1}. On the other hand, we may apply the functor \(F_{n-1}\) to the diagram in Figure~\ref{n-Blokske}, then take kernels vertically in the resulting diagram
\begin{equation}\label{n-Double}
    \vcenter{\xymatrix@=3em{F_{n-1}(\R{d}\square \R{c}) \ar@<.5ex>[r]^-{F_{n-1}(d^{n}_1)} \ar@<-.5ex>[r]_-{F_{n-1}(d^{n}_0)} \ar@<.5ex>[d]^-{F_{n-1}(d^{n-1}_1)} \ar@<-.5ex>[d]_-{F_{n-1}(d^{n-1}_0)} & F_{n-1}(\R{d}) \ar@<.5ex>[d]^-{F_{n-1}(d_1)} \ar@<-.5ex>[d]_-{F_{n-1}(d_0)} \\
    F_{n-1}(\R{c}) \ar@<.5ex>[r]^-{F_{n-1}(c_1)} \ar@<-.5ex>[r]_-{F_{n-1}(c_0)} & F_{n-1}(X).}}
\end{equation}
to find the internal groupoid
\[
    {\Ker(F_{n-1}(d^{n-1}_{1}))\two \Ker(F_{n-1}(d_{1}))}\text{.}
\]
Indeed, \(\R{d}\square\R{c}\) is the kernel pair of \(r\colon{\R{c}\to\R{f}}\), with second projection \(d^{n-1}_{1}\); the level-\(n\) form of Lemma~\ref{Lemma Kernel Extension and Kernel Pair Centralisation Comparison}, in its \(\pi_{1}\)-variant, gives \(\Ker(F_{n}(r))\cong\Ker(F_{n}(d^{n-1}_{1}))\), and since \(d^{n-1}_{1}\) is split, the level-\(n\) form of Lemma~\ref{Lemma Kernel Reflection Centralisation Comparison} gives
\[
    \Ker(F_{n}(d^{n-1}_{1}))\cong\Ker(F_{n-1}(d^{n-1}_{1}))\text{;}
\]
hence \(\Ker(F_{n}(r))\cong\Ker(F_{n-1}(d^{n-1}_{1}))\). Similarly, \(\R{d}\) is the kernel pair of the morphism \(d\colon{X\to D}\) with second projection \(d_{1}\), so that
\[
    \Ker(F_{n}(d))\cong\Ker(F_{n-1}(d_{1}))\text{.}
\]
Being induced by the universal properties of the kernels and pullbacks involved, these isomorphisms are natural, hence compatible with the horizontal groupoid structures; Lemma~\ref{Lemma Shift Functoriality} below makes this precise.

Of course the first homologies of those groupoids do now also coincide, which means that the domain \(\Ker(\overline{c_0}) \cap \Ker(\overline{c_1})\) of \(H_{1}F_{n}(\R{P})\) is
\begin{equation}\label{Higher-Intersection}
    \Ker(F_{n-1}(d_{1}^{n-1}))\cap \Ker(F_{n-1}(d_{0}^{n}))\cap \Ker(F_{n-1}(d_{1}^{n})).
\end{equation}
Lemma~\ref{Lemma H_{1} Zero Means Relation} now tells us that this intersection is nothing but
\begin{equation}\label{Higher-H2}
    \bigcap_{i\in \{n-1,n\}}(\Ker(F_{n-1}(d^{i}_{0}))\cap \Ker(F_{n-1}(d^{i}_{1}))).
\end{equation}
Indeed, by Theorem~\ref{Theorem Interpretation Fundamental Group}, the first homology of the bottom horizontal groupoid in~\eqref{n-Double} is the fundamental group of the projective object \(C\), which is trivial by the level-\((n-1)\) form of Corollary~\ref{Corollary Fundamental Group Projective}. Hence this groupoid is an equivalence relation. If now \(x\) is an element of the intersection~\eqref{Higher-Intersection}, then \(F_{n-1}(d_{0}^{n-1})(x)=0\), because
\[
    F_{n-1}(c_{0})(F_{n-1}(d_{0}^{n-1})(x))=F_{n-1}(d_{0})(F_{n-1}(d^{n}_{0})(x))=F_{n-1}(d_{0})(0)=0
\]
and
\[
    F_{n-1}(c_{1})(F_{n-1}(d_{0}^{n-1})(x))=F_{n-1}(d_{0})(F_{n-1}(d^{n}_{1})(x))=F_{n-1}(d_{0})(0)=0,
\]
while \(F_{n-1}(c_{0})\) and \(F_{n-1}(c_{1})\) are jointly monic.

By definition, the intersection \eqref{Higher-H2} is precisely \(H_{1}H_{1}(F_{n-1}(\Eq^{2}(P)))\). Thus we obtain the following result:

\begin{lemma}[Key Lemma]\label{Key Lemma}
    For \(n\geq 1\), if \(P\) is an \((n+1)\)-fold presentation of an object \(Y\) and \(F\) is a Birkhoff reflector, then
    \[
        \iota H_{1}^{2}(F_{n-1}(\Eq^{2}(P)))=\bigl(H_{1}H_{1}(F_{n-1}(\Eq^{2}(P)))\to 0\bigr)\cong H_{1}F_{n}(\R{P}).
    \]
    More generally, the same conclusion holds for any \((n+1)\)-fold extension \(P\) as in Figure~\ref{n-Blokske} where \(c\) is a split epimorphism.
\end{lemma}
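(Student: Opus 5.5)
The proof assembles the discussion preceding the statement into a chain of isomorphisms, working in $\Ext^{n-1}(\C)$ with the level-$(n-1)$ and level-$n$ forms of the results of Sections~\ref{section fundamental group} and~\ref{section reflection nerve}.

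First, I identify $H_{1}F_{n}(\R{P})$ as the intersection $\Ker(\overline{c_0})\cap\Ker(\overline{c_1})$ in Diagram~\eqref{n-H1}, sitting over $0$. This uses that $(\R{f},f_{0},f_{1})$ is an equivalence relation, so its first homology vanishes by Lemma~\ref{Lemma H_{1} Zero Means Relation}.

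Next, I compute this intersection by taking kernels vertically. Intersections of kernels are limits and commute with kernels. Since the bottom row of~\eqref{n-H1} has trivial $H_{1}$, the domain $\Ker(\overline{c_0})\cap\Ker(\overline{c_1})$ is the first homology of the groupoid $\Ker(F_{n}(r))\two\Ker(F_{n}(d))$.

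I then replace this groupoid by $\Ker(F_{n-1}(d^{n-1}_{1}))\two\Ker(F_{n-1}(d_{1}))$, in three steps:
\begin{itemize}
\item $\R{d}\square\R{c}$ is the kernel pair of $r$, and $\R{d}$ that of $d$.
\item The $\pi_{1}$-variant of Lemma~\ref{Lemma Kernel Extension and Kernel Pair Centralisation Comparison} gives $\Ker(F_{n}(r))\cong\Ker(F_{n}(d^{n-1}_{1}))$ and $\Ker(F_{n}(d))\cong\Ker(F_{n}(d_{1}))$.
\item Lemma~\ref{Lemma Kernel Reflection Centralisation Comparison} applied to the split epimorphisms $d^{n-1}_{1}$ and $d_{1}$ then passes from $F_{n}$ to $F_{n-1}$.
\end{itemize}

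The main obstacle is that these isomorphisms must be compatible with the horizontal groupoid structures, so that the first homologies agree and not merely the objects. I would handle this by noting that every isomorphism involved is the canonical comparison induced by universal properties: the units $\rho^{n}$ and $\eta$, kernels and pullbacks. Naturality in the morphism of extensions $(c_{i},c_{i})$ then shows that the horizontal projections correspond. This is the content of the forward-referenced Lemma~\ref{Lemma Shift Functoriality}, which I would prove by tracking each comparison square. Once compatibility holds, the first homology of the left-hand groupoid is the triple intersection~\eqref{Higher-Intersection}.

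Finally, I upgrade~\eqref{Higher-Intersection} to the symmetric form~\eqref{Higher-H2}, which by definition is $H_{1}^{2}(F_{n-1}(\Eq^{2}(P)))$. This requires showing that the bottom groupoid $F_{n-1}(\R{c})\two F_{n-1}(X)$ of~\eqref{n-Double} is an equivalence relation, i.e.\ that $F_{n-1}(c_{0})$ and $F_{n-1}(c_{1})$ are jointly monic. Given joint monicity, the generalised-element computation above shows that $F_{n-1}(d^{n-1}_{0})$ also kills the intersection.

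For a presentation, joint monicity follows from Theorem~\ref{Theorem Interpretation Fundamental Group} and Corollary~\ref{Corollary Fundamental Group Projective} at level $n-1$: $C$ is projective, so its fundamental group, which is the $H_{1}$ of this groupoid, vanishes, and Lemma~\ref{Lemma H_{1} Zero Means Relation} applies. In the general case, where $c$ is split, Proposition~\ref{Cech on Split Epi} at level $n-1$ (Lemma~\ref{Lemma Pullback Preservation Split}) directly gives that $F_{n-1}(\R{c})$ is the kernel pair of $F_{n-1}(c)$. Projectivity of the other vertices is used nowhere else, so this covers the more general statement.
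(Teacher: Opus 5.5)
Your proposal is correct and follows essentially the same argument as the paper, in the same order. You identify $H_{1}F_{n}(\R{P})$ through the vertical-kernel groupoid, and you pass from $F_{n}$ to $F_{n-1}$ using the $\pi_{1}$-variant of Lemma~\ref{Lemma Kernel Extension and Kernel Pair Centralisation Comparison} and Lemma~\ref{Lemma Kernel Reflection Centralisation Comparison}, with naturality supplied by Lemma~\ref{Lemma Shift Functoriality}. You then obtain~\eqref{Higher-H2} from~\eqref{Higher-Intersection} by joint monicity: via the vanishing fundamental group of $C$, or via Proposition~\ref{Cech on Split Epi} when $c$ is split. The only slip is notational: the naturality needed is with respect to the morphisms of extensions $(c_{i},f_{i})\colon{r\to d}$, which induce $(d^{n}_{i},c_{i})\colon{d^{n-1}_{1}\to d_{1}}$, and not ``$(c_{i},c_{i})$''.
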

\begin{proof}
    The equality is the definition of \(\iota\) (Notation~\ref{Notation iota}) together with the identification of the intersection~\eqref{Higher-Intersection} with~\eqref{Higher-H2}, and the isomorphism is established in the preceding paragraphs. For the more general statement, recall that the projectivity of \(C\) was used only to make the first homology of the bottom groupoid of~\eqref{n-Double} vanish; when \(c\) is a split epimorphism, this is taken care of by Proposition~\ref{Cech on Split Epi}, and the rest of the argument stays the same.
\end{proof}

Iterating the Key Lemma requires the following functoriality.

\begin{lemma}\label{Lemma Shift Functoriality}
    Let \(\D\) be a pointed category with finite limits. The embedding \(\iota\colon{\D\to\Arr(\D)}\) of Notation~\ref{Notation iota} preserves finite limits, and so commutes with the formation of homology of internal groupoids: \(H_{1}^{k}\circ\iota=\iota\circ H_{1}^{k}\) for every \(k\geq 1\).

    Moreover, the isomorphism of Lemma~\ref{Key Lemma} is natural in \(P\). Precisely, on the category whose objects are the \((n+1)\)-fold extensions of the shape displayed in Figure~\ref{n-Blokske} in which \(c\) is a split epimorphism---a morphism being a morphism of such diagrams---the assignments \(P\mapsto \iota H_{1}^{2}(F_{n-1}(\Eq^{2}(P)))\) and \(P\mapsto H_{1}F_{n}(\R{P})\) are functorial, and Lemma~\ref{Key Lemma} is a natural isomorphism between them. Consequently, when \(P\) carries an internal groupoid structure in one or more further directions, the isomorphism is one of internal multiple groupoids in those directions.
\end{lemma}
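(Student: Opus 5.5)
The first assertion is formal. In a pointed category \(\D\) with finite limits, a limit in \(\Arr(\D)\) is computed pointwise, and the terminal object \(0\) is preserved by every limit. Hence \(\iota\) sends a finite limit \(\lim A_{j}\) to the arrow \(\lim A_{j}\to\lim 0=0\), which is \(\iota(\lim A_{j})\). So \(\iota\) preserves finite limits.

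It follows that \(\iota\) takes internal groupoids to internal groupoids. It also preserves kernels and finite intersections of kernels. Now \(H_{1}\) of a groupoid is \(\Ker(d_{0})\cap\Ker(d_{1})\), a finite limit, so \(H_{1}\circ\iota=\iota\circ H_{1}\) on groupoids. Iterating this one direction at a time, and using Remark~\ref{Remark Nerve Symmetry} to disregard the order of the directions, gives \(H_{1}^{k}\circ\iota=\iota\circ H_{1}^{k}\).

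For naturality, I will first make both assignments functorial. A morphism \(\phi\colon P\to P'\) of diagrams as in Figure~\ref{n-Blokske} induces morphisms between all the kernel pairs involved, by their universal properties, and in particular a morphism \(\Eq^{2}(P)\to\Eq^{2}(P')\) of double equivalence relations. Applying the functor \(F_{n-1}\) and then \(H_{1}^{2}\), which is built from kernels, gives the first functor. Similarly, \(\R{P}\to\R{P'}\) followed by \(F_{n}\) and \(H_{1}\) gives the second.

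Next I will check that each step of the chain of isomorphisms in the preceding paragraphs is a component of a natural transformation. These steps are: Lemma~\ref{Lemma Kernel Extension and Kernel Pair Centralisation Comparison} (\(\pi_{1}\)-variant) at level \(n\), giving \(\Ker(F_{n}(r))\cong\Ker(F_{n}(d^{n-1}_{1}))\); Lemma~\ref{Lemma Kernel Reflection Centralisation Comparison} at level \(n\); the same two lemmas applied to \(d\); and finally the identification of the intersections~\eqref{Higher-Intersection} and~\eqref{Higher-H2}. The key observation is that none of these isomorphisms is chosen. Each is the restriction to kernels of a canonical comparison morphism: a unit component \(\rho^{n}\) of the centralisation, the isomorphism \(F_{n-1}(\rho^{n})\) of Lemma~\ref{Lemma reflection of centralisation}, a unit \(\eta\) of \(F_{n-1}\), or a comparison into a pullback. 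All of these are natural, because units of adjunctions and comparisons into limits are natural. A composite of restrictions of natural transformations to subobjects that are themselves functorial (kernels and intersections of kernels) is again natural. The last step, \eqref{Higher-Intersection} = \eqref{Higher-H2}, is an equality of subobjects, so it is trivially natural.

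The step I expect to need the most care is Lemma~\ref{Lemma Kernel Reflection Centralisation Comparison}. Its proof passes through triviality of the split central extension \(F_{n}(d^{n-1}_{1})\). I will reformulate that isomorphism explicitly as the restriction to kernels of the natural morphism \(\eta_{F_{n}[-]}\circ\rho^{n}_{-}\), with the isomorphism \(F_{n-1}(\rho^{n})\) as the comparison. Written this way, naturality in split epimorphisms is clear, since \(\rho^{n}\) and \(\eta\) are natural. Splitness of \(c\) enters only to guarantee that the objects lie in the stated category and that the Key Lemma applies; it is preserved by morphisms of such diagrams.

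For the final assertion, suppose \(P\) is an internal groupoid in further directions. Then its structure maps are morphisms in this category of diagrams, and the groupoid axioms are equations between them. A natural isomorphism commutes with all the structure maps, and both functors preserve the finite limits defining multiple groupoids: they are composites of kernel pairs, \(F_{n-1}\) and \(F_{n}\) on pullbacks of split epimorphisms (Lemma~\ref{Lemma Pullback Preservation Split}), and kernels. Hence the isomorphism is one of internal multiple groupoids in those directions.
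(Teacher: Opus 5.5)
Your proposal is correct and follows the paper's proof essentially step by step. Like the paper, you use pointwise limits in \(\Arr(\D)\) for \(\iota\), functoriality of every ingredient, and naturality of the one non-functorial identification because it is built from units and universal comparison maps; you then apply that naturality to the structure maps of \(P\). Your observation that both assignments preserve the pullbacks of split epimorphisms defining the groupoid structure is a useful extra detail the paper leaves implicit.

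One small slip: the isomorphism of Lemma~\ref{Lemma Kernel Reflection Centralisation Comparison} is the restriction to \(\Ker(F_{n}(f))\) of \(F_{n-1}(\rho^{n}_{f})^{-1}\circ\eta_{F_{n}[f]}\). By contrast, \(\eta_{F_{n}[f]}\circ\rho^{n}_{f}\) restricts to a map out of \(\Ker(f)\). Naturality still follows, since that map factors through the isomorphism via the natural regular epimorphism \(\Ker(f)\to\Ker(F_{n}(f))\).
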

\begin{proof}
    The domain component of \(\iota\) is the identity and its codomain component is constant at the zero object, so \(\iota\) preserves finite limits; since iterated first homology is computed degree-wise by taking kernels and their intersections (all of which are finite limits) it is preserved by \(\iota\), which gives \(H_{1}^{k}\circ\iota=\iota\circ H_{1}^{k}\).

    For the naturality, every ingredient of the isomorphism of Lemma~\ref{Key Lemma} is functorial in \(P\): the kernel-pair functors \(\Eq\) and \(\Eq^{2}\), the reflectors \(F_{n-1}\) and \(F_{n}\), the formation of kernels and of their intersections, and the embedding \(\iota\). The one step that is not the application of a functor---the identification, in the paragraph preceding the lemma, of the groupoid \(\Ker(F_{n}(r))\two\Ker(F_{n}(d))\) with
    \[
        \Ker(F_{n-1}(d^{n-1}_{1}))\two\Ker(F_{n-1}(d_{1}))
    \]
    ---is induced by the universal properties of the kernels and pullbacks involved (Lemmas~\ref{Lemma Kernel Extension and Kernel Pair Centralisation Comparison} and~\ref{Lemma Kernel Reflection Centralisation Comparison}), and so is natural too. The final assertion, for a single further direction, follows by applying this naturality to the structure morphisms of the groupoid \(P\); several directions are treated one at a time, the isomorphism of internal groupoids obtained in one direction being itself natural, as all of its constituents are.
\end{proof}

The extensions to which the Key Lemma will be applied are faces of a higher projective presentation. By a \defn{face} of an \((n+1)\)-fold extension \(P\), viewed as an \((n+1)\)-cube, we mean any of the lower-dimensional cubes obtained by evaluating each of a number of chosen directions at its domain or codomain end. A face is again an extension: the defining condition of a higher extension~\cite[Definition~3.4]{EGVdL} is invariant under interchanging the two outer directions of \(\Arr^{n+1}(\C)\) and hence, inductively, under all permutations of the directions, so that an \((n+1)\)-fold extension may be read as an extension between \(n\)-fold extensions along any of its directions, its domain and codomain there being \(n\)-fold extensions by definition. The lemma we need extends the well-known fact that a regular epimorphism with a projective codomain is a split epimorphism.

\begin{lemma}\label{Lemma Split Faces}
    Let \(P\) be an \((n+1)\)-fold projective presentation of an object \(Y\). Then every object of \(\C\) occurring as a vertex of the \((n+1)\)-cube \(P\), with the sole exception of \(Y\) itself, is projective. Consequently, any face of \(P\) which is taken at the domain end of at least one of the directions it does not span is, when read as a morphism of cubes in any one of the directions it does span, a split epimorphism: if the face spans the directions in \(T\subsetneq n+1\) and is read in the direction \(b\in T\), then it admits a section in the category \(\Ext^{|T|-1}(\C)\) where it is an extension.
\end{lemma}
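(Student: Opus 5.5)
The plan is to argue by induction on \(n\), using the recursive structure of Definition~\ref{Definition Higher Presentation}. For \(n=0\) the cube \(P\) is a single arrow \(f\colon{X\to Y}\) with \(X\) projective, so the only vertex other than \(Y\) is projective. For the inductive step, write \(P\colon{\Dom P\to\Cod P}\). Here \(\Cod P\) is an \(n\)-fold projective presentation of \(Y\), so by the induction hypothesis all its vertices except \(Y\) are projective. Every vertex of \(\Dom P\) is projective by the vertex-wise description of extension-projectivity~\cite[Lemma~5.5]{EGVdL}. Since the vertices of \(P\) are exactly those of \(\Dom P\) together with those of \(\Cod P\), the first claim follows.

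For the second claim, take a face \(Q\) spanning the directions in \(T\), evaluated at the domain end in some direction \(a\notin T\). Read \(Q\) in a direction \(b\in T\), as a morphism \(Q\colon{\Dom_{b}Q\to\Cod_{b}Q}\) of \((|T|-1)\)-cubes. The codomain \(\Cod_{b}Q\) has all its vertices at the domain end of direction \(a\), so none of them is the terminal corner \(Y\), which lies at the codomain end of every direction. By the first part, all vertices of \(\Cod_{b}Q\) are therefore projective. The remark preceding the lemma shows that faces are higher extensions. Hence \(\Cod_{b}Q\) is an object of \(\Ext^{|T|-1}(\C)\) all of whose vertices are projective, and so it is \(\E^{|T|-1}\)-projective by~\cite[Lemma~5.5]{EGVdL} (the converse direction of that equivalence). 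The morphism \(Q\) is an extension in \(\Ext^{|T|-1}(\C)\), that is, it lies in \(\E^{|T|-1}\), because it is a face of a higher extension. Lifting \(1_{\Cod_{b}Q}\) along \(Q\) by projectivity then gives a section in \(\Ext^{|T|-1}(\C)\). This is the higher analogue of the fact that a regular epimorphism with projective codomain splits.

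I expect the main obstacle to be bookkeeping rather than mathematics. First, one must check that the face read in direction \(b\) really is an \(\E^{|T|-1}\)-extension between \((|T|-1)\)-extensions, which relies on the permutation invariance stated before the lemma. Second, one must make sure that~\cite[Lemma~5.5]{EGVdL} applies at level \(|T|-1\), i.e.\ that projectivity there is relative to the class \(\E^{|T|-1}\) and characterised vertex-wise. If that characterisation is available only in the direction ``projective implies vertexwise projective'', a fallback is to construct the section by induction on \(|T|\). One would choose sections vertex by vertex at the codomain corner first, then lift successively using the extension property of the comparison maps to pullbacks, as in the proof of~\cite[Lemma~5.5]{EGVdL}.
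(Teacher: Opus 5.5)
Your proposal is correct and follows the paper's proof essentially step for step. You prove the first claim by induction, using vertex-wise projectivity of \(\Dom P\) (\cite[Lemma~5.5]{EGVdL}) and the induction hypothesis on \(\Cod P\). You prove the second by noting that the codomain of the face avoids the terminal vertex \(Y\), hence is extension-projective, so the identity lifts to a section. The converse direction of \cite[Lemma~5.5]{EGVdL} that you worry about is available: the paper states it as an equivalence in Definition~\ref{Definition Higher Presentation} and uses it exactly as you do, so your fallback is not needed.
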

\begin{proof}
    The first statement is proved by induction on \(n\), there being nothing to prove when \(n=0\). By Definition~\ref{Definition Higher Presentation}, the domain \(\Dom P\) is extension-projective, so that all of its vertices are projective by~\cite[Lemma~5.5]{EGVdL}, while the codomain \(\Cod P\) is an \(n\)-fold projective presentation of \(Y\), whose vertices other than \(Y\) are projective by the induction hypothesis.

    For the second statement, write \(\chi\) for the face in question, read as an extension in \(\Ext^{|T|-1}(\C)\) in the direction \(b\). Every vertex of the cube \(\Cod\chi\) is a vertex of \(P\) whose coordinate in some direction outside \(T\) is a domain; in particular it is not the terminal vertex \(Y\), so it is projective by the first statement. Hence \(\Cod\chi\) is extension-projective by~\cite[Lemma~5.5]{EGVdL}. Since extension-projectivity is precisely projectivity with respect to the class of extensions~\cite[Definition~5.4]{EGVdL}, the identity on \(\Cod\chi\) lifts along \(\chi\); such a lifting is the desired section.
\end{proof}

\subsection{The main result}

\begin{proposition}\label{Proposition Shift}
    For any \(n\geq 2\) and any \((n+1)\)-fold projective presentation~\(P\) of an object \(Y\), there is an isomorphism
    \[
        \iota^{n}\HH_{n+1}F(Y)\cong H_{1}F_{n}(\R{P})\text{,}
    \]
    the left-hand side being formed from \(P\).
\end{proposition}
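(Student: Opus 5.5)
We plan to peel off the groupoid directions of \(P\) one at a time, applying the Key Lemma (Lemma~\ref{Key Lemma}) at each level of the tower and descending from \(F_{n}\) to \(F\).

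We start from Proposition~\ref{Proposition HH_n+1 as Homology of a Groupoid}, which gives \(\HH_{n+1}F(Y)\cong H_{1}^{n+1}F(\Eq^{n+1}(P))\). By Lemma~\ref{Lemma Shift Functoriality} we have \(\iota^{n}H_{1}^{n+1}=H_{1}^{n-1}\iota^{n}H_{1}^{2}\) up to reordering, and by Remark~\ref{Remark Nerve Symmetry} the directions may be treated in any order. So the target reduces to a chain of isomorphisms
\[
\iota^{k}H_{1}^{n+1-k}F_{n-k}(\Eq^{n+1-k}(P))\cong\iota^{k-1}H_{1}^{n+2-k}F_{n-k+1}(\Eq^{n+2-k}(P))\quad\text{(suitably arranged)},
\]
for \(k=n,n-1,\dots,1\). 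The two ends of the chain are \(\iota^{n}H_{1}^{n+1}F(\Eq^{n+1}(P))\) and \(H_{1}F_{n}(\R{P})\).

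More precisely, we prove by induction on \(m\), for \(1\leq m\leq n\), the following statement:
\[
\iota^{m}H_{1}^{m+1}F_{n-m}(\Eq^{m+1}(\cdot))\cong H_{1}F_{n}(\R{P})
\]
with the remaining \(n-m\) directions held as groupoid directions. Here \(\Eq^{m+1}\) is taken in the \(m+1\) outer directions of \(P\), and \(P\) is read as an \((m+1)\)-fold extension in \(\Ext^{n-m}(\C)\).

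The case \(m=1\) is the Key Lemma applied at level \(n\): \(P\) is a \(2\)-fold extension in \(\Ext^{n-1}(\C)\) with projective codomain edge \(c\), so that \(\iota H_{1}^{2}F_{n-1}(\Eq^{2}(P))\cong H_{1}F_{n}(\R{P})\).

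For the inductive step we look at the groupoid \(F_{n-m}(\Eq^{m+1}(P))\) in the new direction, before taking homology in the remaining directions. Its domain and codomain in that direction come from kernel pairs of faces of \(P\). We apply the Key Lemma at level \(n-m\) (the lowered level) to these faces, which are \(2\)-fold extensions in \(\Ext^{n-m-1}(\C)\). For them we use the general form of the lemma, requiring \(c\) to be split only: by Lemma~\ref{Lemma Split Faces}, the faces occurring, taken at a domain end in some direction not spanned, are split epimorphisms in the relevant direction. This replaces \(F_{n-m}\) applied to a kernel pair by \(\iota H_{1}^{2}F_{n-m-1}(\Eq^{2}(\cdot))\). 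Lemma~\ref{Lemma Shift Functoriality} then upgrades the isomorphism to one of internal multiple groupoids in the remaining directions. Taking \(H_{1}\) in those directions and commuting \(\iota\) past \(H_{1}^{k}\) increases \(m\) by one.

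When \(m=n\) we reach \(F_{0}=F\) and \(\iota^{n}H_{1}^{n+1}F(\Eq^{n+1}(P))\), which by Proposition~\ref{Proposition HH_n+1 as Homology of a Groupoid} is \(\iota^{n}\HH_{n+1}F(Y)\).

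The main obstacle is the bookkeeping in the inductive step. We must check that every face to which the Key Lemma is applied has the required shape, namely that its bottom-right corner is genuinely the codomain and its relevant edge is split, so that Lemma~\ref{Lemma Split Faces} applies. The only face lacking this property is the one through \(Y\), and it is handled by the \(m=1\) step, where projectivity of \(C\) is used directly. We must also check that naturality (Lemma~\ref{Lemma Shift Functoriality}) really makes the levelwise isomorphisms compatible with all the other groupoid directions simultaneously. As a first attempt, we would verify this carefully for \(n=2\), by writing out the cube, and then argue that the general pattern is the same.
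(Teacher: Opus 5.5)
Your proposal follows the paper's proof essentially step for step. It descends from \(F_{n}\) to \(F\) by repeated applications of Lemma~\ref{Key Lemma}, in its split-epimorphism version (justified by Lemma~\ref{Lemma Split Faces}), to the squares obtained from faces of \(P\) by forming kernel pairs in the already-treated outer directions, which are the paper's \(Q_{S}\). Their \(c\)-edges are split because kernel-pair functors preserve split epimorphisms, and Lemma~\ref{Lemma Shift Functoriality} assembles the corner-wise isomorphisms into one of multiple groupoids before \(\iota\) is commuted past \(H_{1}\).

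The only slip is in your preliminary display. There the exponents of \(H_{1}\) and \(\Eq\) should read \(k+1\) on the left and \(k\) on the right, not \(n+1-k\) and \(n+2-k\). This matches your inductive statement \(\iota^{m}H_{1}^{m+1}F_{n-m}(\Eq^{m+1}(P))\).
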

\begin{proof}
    We compute, applying Lemma~\ref{Key Lemma} once in each direction, from the innermost outwards:
    \begin{align*}
        H_{1}F_{n}(\R{P}) & \cong \iota H_{1}^{2}(F_{n-1}(\Eq^{2}(P)))     \\
                          & \cong \iota^{2} H_{1}^{3}(F_{n-2}(\Eq^{3}(P))) \\
                          & \qquad\vdots                                   \\
                          & \cong \iota^{n-1} H_{1}^{n}(F_{1}(\Eq^{n}(P))) \\
                          & \cong \iota^{n} H_{1}^{n+1}(F(\Eq^{n+1}(P))).
    \end{align*}
    The first isomorphism is Lemma~\ref{Key Lemma}, applied to the projective presentation \(P\) at level \(n\), the kernel pairs in \(\Eq^{2}(P)\) being taken in the directions \(n-1\) and \(n\).

    Consider then the \(j\)th isomorphism of the chain, where \(2\leq j\leq n\), and write \(k\coloneq n+1-j\) for the level at which Lemma~\ref{Key Lemma} is now applied. The source \(\iota^{j-1}H_{1}^{j}(F_{k}(\Eq^{j}(P)))\) of this isomorphism is computed from the \(j\)-fold equivalence relation \(\Eq^{j}(P)\), formed in the directions \(k\), \dots, \(n\) of \(P\), whose entries are objects of \(\Ext^{k}(\C)\); the innermost homology is the one of the direction \(k\). Along the outer directions \(k+1\), \dots, \(n\), this structure is a \((j-1)\)-fold equivalence relation whose corners are indexed by the subsets \(S\subseteq\{k+1,\dots,n\}\) recording the directions in which a corner sits at the kernel-pair level. For such a subset \(S\), let \(Q_{S}\) be the square in the directions \((k-1,k)\) obtained from the face of \(P\) spanned by the directions \(\{0,\dots,k\}\cup S\), taken at the domain end of each of the remaining directions, by forming kernel pairs in all the directions in \(S\); as in the discussion of Figure~\ref{n-Blokske}, these squares are \((k+1)\)-fold extensions, kernel pairs of higher extensions being computed pointwise~\cite[Proposition~3.5]{EGVdL}. The direction-\(k\) groupoid structure of \(F_{k}(\Eq^{j}(P))\) at the corner \(S\) is then precisely the Galois groupoid \({F_{k}(\R{Q_{S}})\two F_{k}(\Dom Q_{S})}\) of the extension \(Q_{S}\), so that the innermost homology of the source is \(H_{1}F_{k}(\R{Q_{S}})\), formed naturally in \(S\). Note that the earlier steps of the chain enter here only through the bookkeeping of directions: the squares \(Q_{S}\) are built directly out of the presentation \(P\).

    Each of the squares \(Q_{S}\) satisfies the hypothesis of the split-epimorphism case of Lemma~\ref{Key Lemma}. Indeed, the edge of \(Q_{S}\) which plays the part of \(c\) in Figure~\ref{n-Blokske} is its direction-\(k\) morphism at the domain end of the direction \(k-1\); it is obtained by applying the kernel-pair functors of the directions in \(S\) to the face of \(P\) spanned by the directions \(\{0,\dots,k-2\}\cup\{k\}\cup S\), taken at the domain end of all the other directions and read as a morphism in the direction \(k\)---when \(S\) is empty, the edge is such a face itself. By Lemma~\ref{Lemma Split Faces} that face is a split epimorphism, and a kernel-pair functor, like any functor, preserves split epimorphisms; hence the \(c\)-edge of \(Q_{S}\) is a split epimorphism, as required.

    Lemma~\ref{Key Lemma} at level \(k\) thus provides isomorphisms
    \[
        H_{1}F_{k}(\R{Q_{S}})\cong\iota H_{1}^{2}(F_{k-1}(\Eq^{2}(Q_{S}))),
    \]
    one for each corner \(S\). The structure morphisms of the \((j-1)\)-fold equivalence relation of corners are morphisms of diagrams of the shape displayed in Figure~\ref{n-Blokske} between squares with a split \(c\)-edge, so by the naturality of Lemma~\ref{Lemma Shift Functoriality} these isomorphisms form an isomorphism of \((j-1)\)-fold internal groupoids in the outer directions \(k+1\), \dots,~\(n\). Applying the iterated first homology \(H_{1}^{j-1}\) of those directions to both sides, moving the embedding \(\iota\) across it by means of the equality \(H_{1}^{j-1}\circ\iota=\iota\circ H_{1}^{j-1}\), and observing that the double kernel pairs \(\Eq^{2}(Q_{S})\) of the corners together constitute \(\Eq^{j+1}(P)\), we obtain the \(j\)th isomorphism of the chain. The result now follows from Proposition~\ref{Proposition HH_n+1 as Homology of a Groupoid}, which shows that the last object \(\iota^{n}H_{1}^{n+1}(F(\Eq^{n+1}(P)))\) is isomorphic to \(\iota^{n}\HH_{n+1}F(Y)\).
\end{proof}

\begin{theorem}\label{Theorem Comparison Level n}
    In the varietal context of Convention~\ref{Convention Comonadic Homology} (\(\C\) a semi-abelian variety and \(F\colon{\C\to \X}\) the reflector onto a subvariety) the \Tsjech\ derived functor \(\HH_{n}F(Y)\) of an object \(Y\) is isomorphic to the \((n+1)\)st homology object \(H_{n+1}(Y,F)\), for every \(n\geq 1\).
\end{theorem}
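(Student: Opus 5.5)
We argue by cases on \(n\). The cases \(n=1\) and \(n=2\) are already settled: \(n=1\) is Theorem~\ref{Theorem Comparison Level 1} combined with the identification \(\pi_{1}^{F}(Y)\cong H_{2}(Y,F)\) of Remark~\ref{Remark Culot Everaert}, and \(n=2\) is Theorem~\ref{Theorem Comparison Level 2}. So fix \(n\geq 2\) and aim to prove \(\HH_{n+1}F(Y)\cong H_{n+2}(Y,F)\). Throughout we work in the varietal context, where every object admits an \((n+1)\)-fold projective presentation and every \(\Ext^{k}(\C)\) has enough \(\E^{k}\)-projectives. This lets us use the results of Sections~\ref{section basic concepts}--\ref{section reflection nerve} at every level of the tower, in the relative reading described at the end of Section~\ref{section reflection nerve}.

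Choose an \((n+1)\)-fold projective presentation \(P\) of \(Y\), and regard it as a projective presentation, in \(\Ext^{n}(\C)\), of its codomain \(A\coloneq\Cod P\). By Definition~\ref{Definition Higher Presentation}, \(A\) is an \(n\)-fold projective presentation of \(Y\). The proof is then a chain of isomorphisms.
\begin{align*}
    \iota^{n}\HH_{n+1}F(Y) & \cong H_{1}F_{n}(\R{P}) \cong \HH_{1}F_{n}(A) \cong \pi_{1}^{F_{n}}(A)=H_{2}(A,F_{n}) \\
    & \cong \iota H_{3}(\Cod A,F_{n-1})\cong\cdots\cong \iota^{n}H_{n+2}(Y,F).
\end{align*}
The steps are justified as follows.
\begin{itemize}
    \item The first isomorphism is Proposition~\ref{Proposition Shift}.
    \item The second is Proposition~\ref{Proposition HH_1 as Homology of a Groupoid} in the Galois structure \(\Gamma_{n}\). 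The \(P\) used there is a projective presentation of \(A\) in \(\Ext^{n}(\C)\), and its kernel pair is computed pointwise by~\cite[Proposition~3.5]{EGVdL}.
    \item The third is Theorem~\ref{Theorem Comparison Level 1} for the Birkhoff reflector \(F_{n}\). This needs only that \(\CExt^{n}(\C)\) is strongly \(\E^{n}\)-Birkhoff and that \(\Ext^{n}(\C)\) has enough \(\E^{n}\)-projectives.
    \item The identification \(\pi_{1}^{F_{n}}(A)=H_{2}(A,F_{n})\) is part of Convention~\ref{Convention Comonadic Homology}.
    \item The remaining isomorphisms come from applying Theorem~\ref{Theorem EGVdL Shift} \(n\) times. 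At step \(m\) we use the shift \(H_{k}(f,F_{m})\cong\iota H_{k+1}(\Cod f,F_{m-1})\), with \(f\) the successive codomain presentations, descending from \(F_{n}\) to \(F_{0}=F\) and raising the homology degree from \(2\) to \(n+2\).
\end{itemize}
Since \(\iota\) is faithful on objects and \(\iota^{n}\) reflects isomorphisms (its domain component is the identity), we may strip \(\iota^{n}\) from both ends.

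The main obstacle is bookkeeping in the iterated application of Theorem~\ref{Theorem EGVdL Shift}. At each stage we must check that the extension being fed in is genuinely a projective presentation of the next codomain in the appropriate \(\Ext^{m}(\C)\). This holds because, by Definition~\ref{Definition Higher Presentation}, each codomain in the tower \(P\), \(\Cod P\), \(\Cod\Cod P\), \dots\ is again a higher projective presentation, so its domain is extension-projective. We must also keep the \(\iota\)'s aligned, which Lemma~\ref{Lemma Shift Functoriality} permits since \(\iota\) commutes with everything involved.

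Finally, because the right-hand side \(H_{n+2}(Y,F)\) involves no choice of presentation, the argument shows as a by-product that \(\HH_{n+1}F(Y)\) is independent of \(P\).
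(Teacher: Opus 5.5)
Your proposal is correct and follows the paper's proof, with your \(n\) playing the role of the paper's \(n-1\): Proposition~\ref{Proposition Shift}, then the identification of \(H_{1}F_{n}(\R{P})\) with \(\pi_{1}^{F_{n}}(\Cod P)=H_{2}(\Cod P,F_{n})\), then \(n\) applications of Theorem~\ref{Theorem EGVdL Shift}, and finally removal of \(\iota^{n}\); your detour through \(\HH_{1}F_{n}(\Cod P)\) and Theorem~\ref{Theorem Comparison Level 1} merely unfolds the paper's direct appeal to Theorem~\ref{Theorem Interpretation Fundamental Group}. For the last step, the property you need is that \(\iota^{n}\) is fully faithful, so that any isomorphism \(\iota^{n}A\cong\iota^{n}B\) restricts to an isomorphism \(A\cong B\) at the initial vertex; ``reflects isomorphisms'' alone would not give this.
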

\begin{proof}
    For \(n=1\) the statement is Theorem~\ref{Theorem Comparison Level 1} together with Convention~\ref{Convention Comonadic Homology}, by which \(\HH_{1}F(Y)\cong\pi_{1}^{F}(Y)=H_{2}(Y,F)\); for \(n=2\) it is Theorem~\ref{Theorem Comparison Level 2}. Suppose now that \(n\geq 3\), and let \(P\) be an \(n\)-fold projective presentation of \(Y\). Applying Proposition~\ref{Proposition Shift} with \(n-1\) in place of \(n\) (which is legitimate, since \(n-1\geq 2\)) we obtain
    \[
        \iota^{n-1}\HH_{n}F(Y)\cong H_{1}F_{n-1}(\R{P})\cong \iota^{n-1}H_{2}(\Cod P,F_{n-1})\cong \iota^{n-1}H_{n+1}(Y,F).
    \]
    The first isomorphism is Proposition~\ref{Proposition Shift}, the object \(H_{1}F_{n-1}(\R{P})\) being the first homology of the Galois groupoid \(F_{n-1}(\R{P})\two F_{n-1}(\Dom P)\) of \(P\) at level \(n-1\). For the second isomorphism, note that \(P\) is a projective presentation of \(\Cod P\); hence Theorem~\ref{Theorem Interpretation Fundamental Group}, applied in the level-\((n-1)\) Galois structure \(\Gamma_{n-1}\), identifies this homology object with the fundamental group \(\pi_{1}^{F_{n-1}}(\Cod P)\), which is \(\iota^{n-1}H_{2}(\Cod P,F_{n-1})\) by Convention~\ref{Convention Comonadic Homology}. For the third, \(\Cod P\) is an \((n-1)\)-fold presentation of \(Y\), so that iterating Theorem~\ref{Theorem EGVdL Shift} along the tower of Galois structures---each application lowering the level by one and raising the degree by one---yields \(H_{2}(\Cod P,F_{n-1})\cong H_{n+1}(Y,F)\). The claim \(\HH_{n}F(Y)\cong H_{n+1}(Y,F)\) now follows, because the embedding \(\iota^{n-1}\) is fully faithful.
\end{proof}

\begin{corollary}\label{Corollary Higher Cech Well Defined}
    In the varietal context of Convention~\ref{Convention Comonadic Homology}, and for every \(n\geq 1\), the object \(\HH_{n}F(Y)\) is independent of the chosen \(n\)-fold projective presentation of~\(Y\): if \(P\) and \(P'\) are two such presentations, then
    \[
        H_{1}^{n}F(\Eq^{n}(P))\cong H_{n+1}(Y,F)\cong H_{1}^{n}F(\Eq^{n}(P'))\text{.}
    \]
    Hence \(\HH_{n}F\) is well defined as a functor from \(\C\) to \(\X\).
\end{corollary}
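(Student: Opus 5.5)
The plan is to read the corollary off the comparison of Theorem~\ref{Theorem Comparison Level n}, the point being that the proof of that theorem, and of the results it rests on, starts from an \emph{arbitrary} \(n\)-fold projective presentation. Fix \(n\geq 1\) and an \(n\)-fold projective presentation \(P\) of \(Y\). First, Proposition~\ref{Proposition HH_n+1 as Homology of a Groupoid} (for \(n=1\), Proposition~\ref{Proposition HH_1 as Homology of a Groupoid}) identifies the object \(H_{n}\Diag(F(\Cech^{n}(P)))\), defined from \(P\), with \(H_{1}^{n}F(\Eq^{n}(P))\). So it suffices to show that this object is isomorphic to \(H_{n+1}(Y,F)\), which is defined through the canonical comonadic resolution and involves no choice.

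For \(n=1\) I would invoke Theorem~\ref{Theorem Comparison Level 1}: with \(P=f\colon X\to Y\) any projective presentation, \(H_{1}F(\Eq(f))\cong\pi_{1}^{F}(Y)=H_{2}(Y,F)\) by Theorem~\ref{Theorem Interpretation Fundamental Group} and Convention~\ref{Convention Comonadic Homology}. For \(n=2\), the chain in the proof of Theorem~\ref{Theorem Comparison Level 2} applies to the given \(P\) with codomain edge \(f\). It combines Proposition~\ref{Proposition Induction Level 1} (via the Key Lemma applied to \(P\) itself), Theorem~\ref{Theorem Comparison Level 1} for \(F_{1}\), and Theorem~\ref{Theorem EGVdL Shift}, and so gives \(\iota H_{1}^{2}F(\Eq^{2}(P))\cong\iota H_{3}(Y,F)\). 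For \(n\geq 3\), the chain in the proof of Theorem~\ref{Theorem Comparison Level n} gives \(\iota^{n-1}H_{1}^{n}F(\Eq^{n}(P))\cong H_{1}F_{n-1}(\R{P})\cong\iota^{n-1}H_{n+1}(Y,F)\). Here the first step is Proposition~\ref{Proposition Shift} applied to \(P\); the second is Theorem~\ref{Theorem Interpretation Fundamental Group} in \(\Gamma_{n-1}\), which holds for any projective presentation \(P\) of \(\Cod P\); the third iterates Theorem~\ref{Theorem EGVdL Shift}. Since \(\iota^{n-1}\) is fully faithful, I can strip it off. Doing the same for \(P'\) and composing the two isomorphisms through \(H_{n+1}(Y,F)\) gives the displayed statement.

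The step I expect to need real care is verifying that no link of these chains secretly depended on a particular presentation. The Key Lemma and Lemma~\ref{Lemma Split Faces} use only the projectivity of the vertices, which holds for any \(n\)-fold projective presentation by Definition~\ref{Definition Higher Presentation}. Theorem~\ref{Theorem Interpretation Fundamental Group} uses only that the presentation is projective. The comonadic side, including Theorem~\ref{Theorem EGVdL Shift}, is stated for any projective presentation of the codomain. I would go through each cited result and confirm this, which the authors flagged explicitly before Proposition~\ref{Proposition HH_n+1 as Homology of a Groupoid}.

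Finally, for functoriality of \(\HH_{n}F\), I would lift a morphism \(y\colon Y\to Y'\) to a morphism of presentations \(P\to P'\), level by level, using extension-projectivity of the domains. This lift induces a map on \(H_{1}^{n}F(\Eq^{n}(-))\). Independence of the choice of lift would follow by transporting it along the above isomorphisms to the map \(H_{n+1}(y,F)\), which is canonical, provided naturality of each link in the chains is checked (Lemma~\ref{Lemma Shift Functoriality} supplies this for the Key Lemma steps).
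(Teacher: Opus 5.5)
The part of your proposal that proves the displayed isomorphisms is the paper's own argument, and it is fine. You identify \(H_{1}^{n}F(\Eq^{n}(P))\) with \(\HH_{n}F(Y)\) formed from \(P\), run the chains of Theorems~\ref{Theorem Comparison Level 1}, \ref{Theorem Comparison Level 2} and~\ref{Theorem Comparison Level n}, and observe that each begins with an arbitrary presentation and ends at the choice-free object \(H_{n+1}(Y,F)\).

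The gap is in your last paragraph, on functoriality. You want the morphism induced on \(H_{1}^{n}F(\Eq^{n}(-))\) by a lift \(P\to P'\) of \(y\colon Y\to Y'\) to be independent of the lift. For that, the composite isomorphism \(H_{1}^{n}F(\Eq^{n}(P))\cong H_{n+1}(Y,F)\) must be natural with respect to morphisms of presentations, which agree with \(y\) only at the corner \(Y\).

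Lemma~\ref{Lemma Shift Functoriality} supplies this naturality for the Key Lemma steps and nothing more. The paper gives none for the remaining links:
\begin{itemize}
\item the identification of \(H_{1}F_{n-1}(\R{P})\) with \(\pi_{1}^{F_{n-1}}(\Cod P)=H_{2}(\Cod P,F_{n-1})\);
\item the iterated Theorem~\ref{Theorem EGVdL Shift}, which is quoted as a bare isomorphism, and whose intermediate object \(\Cod P\) itself moves when \(P\) does.
\end{itemize}
Remark~\ref{Remark Intrinsic Functoriality} says explicitly that such an intrinsic functoriality, with the comparison natural for it, is left open. The reason is that the homotopy argument of Lemma~\ref{Lemma Homotopically Equivalent} does not lift to higher degrees (see the discussion following Proposition~\ref{Proposition HH_n+1 as Homology of a Groupoid}). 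So your proviso ``provided naturality of each link in the chains is checked'' is not a routine verification; it is the open question itself.

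The corollary does not need it. The paper obtains functoriality by transport of structure. Fix, for each \(Y\), a presentation and the resulting isomorphism \(\theta_{Y}\colon\HH_{n}F(Y)\to H_{n+1}(Y,F)\), and set \(\HH_{n}F(y)=\theta_{Y'}^{-1}\circ H_{n+1}(y,F)\circ\theta_{Y}\). This is functorial because \(H_{n+1}(-,F)\) is, and it involves no naturality at all. Replace your final paragraph by this; the lift-based description you aim for would prove more than the corollary claims.
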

\begin{proof}
    Proposition~\ref{Proposition HH_n+1 as Homology of a Groupoid} identifies \(H_{1}^{n}F(\Eq^{n}(P))\) with the object \(\HH_{n}F(Y)\) formed from \(P\), and Theorem~\ref{Theorem Comparison Level n} identifies the latter with \(H_{n+1}(Y,F)\). The point is that the argument nowhere restricts the presentation: for \(n\geq 3\) the chain of the proof above begins with an arbitrary \(n\)-fold projective presentation \(P\) of \(Y\); for \(n=2\) the same holds of Theorem~\ref{Theorem Comparison Level 2}, since a \(2\)-fold projective presentation of \(Y\) is precisely a \(2\)-fold extension of the shape~\eqref{Blokske} whose codomain edge \(g\colon{C\to Y}\) is a projective presentation of \(Y\); and for \(n=1\) it is Theorem~\ref{Theorem Comparison Level 1}. The comonadic homology object \(H_{n+1}(Y,F)\), on the other hand, is formed from the canonical comonadic resolution of \(Y\) and involves no choice. The functoriality of \(\HH_{n}F\) in \(Y\) is then inherited, through the displayed isomorphism, from that of \(H_{n+1}({-},F)\).
\end{proof}

\begin{remark}\label{Remark Intrinsic Functoriality}
    Corollary~\ref{Corollary Higher Cech Well Defined} thus makes \(\HH_{n}F\) a functor by transport of structure along the comparison with comonadic homology. In degree one no transport is needed: a morphism \(Y\to Y'\) lifts along projective presentations, the argument of Lemma~\ref{Lemma Homotopically Equivalent} makes the induced morphism independent of the chosen lifting, and \(\HH_{1}F\) is a functor intrinsically. Whether a direct description of this kind exists in higher degrees, making the isomorphism of Theorem~\ref{Theorem Comparison Level n} natural with respect to it, is a question we leave open: the obstruction is the one discussed before Remark~\ref{Remark Classical Hopf Formula}, and the presentation-level naturality of Lemma~\ref{Lemma Shift Functoriality} would be the starting point of such a treatment.
\end{remark}

In fact, the varietal restriction is not essential to the comparison.

\begin{theorem}\label{Theorem Comparison General}
    More generally, let \(\C\) be a semi-abelian category carrying a comonad whose counit is, at each object, a regular epimorphism with projective domain (Quillen's conditions~\cite[Section~5.2]{EGVdL}) and let \(H_{n}(-,F)\) be the comonadic homology formed from it as in Convention~\ref{Convention Comonadic Homology}. Then for any Birkhoff reflector \(F\colon{\C\to \X}\) and every \(n\geq 1\), the \Tsjech\ derived functor \(\HH_{n}F(Y)\) is isomorphic to \(H_{n+1}(Y,F)\). Theorem~\ref{Theorem Comparison Level n} is the special case in which \(\C\) is a variety and the comonad is the canonical one.
\end{theorem}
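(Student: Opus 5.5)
The plan is to rerun the proof of Theorem~\ref{Theorem Comparison Level n} and check, at each step, where the varietal hypothesis actually entered. It turns out to enter only through the availability of the comonad and of the results of~\cite{EGVdL} built on it. Those results are formulated for a semi-abelian category equipped with a comonad satisfying Quillen's conditions: the definition of $H_{n}(-,F)$, \cite[Proposition~5.6]{EGVdL} giving enough $\E^{k}$-projectives at every level $\Ext^{k}(\C)$ together with the induced comonads there, \cite[Corollary~5.7]{EGVdL} giving higher projective presentations, the identification $H_{2}(A,F_{k})=\pi_{1}^{F_{k}}(A)$, and the shift Theorem~\ref{Theorem EGVdL Shift}. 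In this generality, then, every object $Y$ has an $(n+1)$-fold projective presentation for each $n$, and the relative transfer of Sections~\ref{section basic concepts}--\ref{section reflection nerve}, described at the end of Section~\ref{section reflection nerve}, applies at every level.

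The argument proceeds degree by degree. For $n=1$, I would use Theorem~\ref{Theorem Comparison Level 1}, which needs only enough projectives and a Birkhoff reflector, together with $\pi_{1}^{F}(Y)\cong H_{2}(Y,F)$; the latter is \cite[Theorem~6.4 and its surrounding results]{EGVdL} in the comonadic setting. For $n=2$, I would repeat the proof of Theorem~\ref{Theorem Comparison Level 2}: Proposition~\ref{Proposition Induction Level 1} rests on the Key Lemma, which is purely Galois-theoretic; Theorem~\ref{Theorem Comparison Level 1} is applied to $F_{1}$ in $\Gamma_{1}$; and Theorem~\ref{Theorem EGVdL Shift} closes the step. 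For $n\geq 3$, I would follow the chain in the proof of Theorem~\ref{Theorem Comparison Level n}. It uses Proposition~\ref{Proposition Shift}, whose ingredients are Loday's theorem (Theorem~\ref{Loday Theorem}, valid in any semi-abelian category), Proposition~\ref{Proposition HH_n+1 as Homology of a Groupoid}, the Key Lemma with its naturality, and Lemma~\ref{Lemma Split Faces}. None of these uses varieties, only enough extension-projectives and the vertex-wise description \cite[Lemma~5.5]{EGVdL}, which holds under Quillen's conditions. The chain continues with Theorem~\ref{Theorem Interpretation Fundamental Group} in $\Gamma_{n-1}$ and with iterated Theorem~\ref{Theorem EGVdL Shift}, and it ends by cancelling the fully faithful $\iota^{n-1}$.

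I expect the main obstacle to be verifying that the cited results from~\cite{EGVdL} really hold under Quillen's conditions alone. That concerns the shift theorem, the equality of $H_{2}(-,F_{k})$ with the fundamental group, and the vertex-wise characterisation of extension-projectives. I would check this first, by confirming that~\cite[Section~5]{EGVdL} is written for a comonad with regular-epimorphic counit and projective values, rather than for a free-algebra comonad. In particular, I would confirm that the induced comonad on $\Ext^{k}(\C)$ still satisfies Quillen's conditions relative to $\E^{k}$. A secondary point is that Theorem~\ref{Theorem Comparison Level 1} needs ``enough projectives'', and this is immediate here because the counit $\varepsilon_{Y}$ is a projective presentation. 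Finally, the varietal case is recovered from the general statement, since the canonical comonad of Convention~\ref{Convention Comonadic Homology} satisfies Quillen's conditions by~\cite[Example~5.3]{EGVdL}.
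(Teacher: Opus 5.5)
Your proposal is correct and takes the paper's own route. The paper likewise observes that the varietal hypothesis enters the proof of Theorem~\ref{Theorem Comparison Level n} only through the comonad, which forms the objects $H_{n}(Y,F)$ and supplies the higher projective presentations at every level via \cite[Corollary~5.7]{EGVdL}; the argument therefore applies as it stands, and \cite[Example~5.3]{EGVdL} recovers the varietal case. Your degree-by-degree audit is a more explicit version of this observation, and the paper adds only that Corollary~\ref{Corollary Higher Cech Well Defined}, the independence of the chosen presentation, transfers in the same way.
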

\begin{proof}
    The variety hypothesis enters the proof of Theorem~\ref{Theorem Comparison Level n} only through the comonad, which serves to form the homology objects \(H_{n}(Y,F)\) and the higher projective presentations drawn from the comonadic resolution. A comonad satisfying Quillen's conditions gives us both: every object is covered by a projective object, and the resolution yields \((n+1)\)-fold projective presentations at each level of the tower~\cite[Corollary~5.7]{EGVdL}. Hence the argument applies as such. A semi-abelian variety, monadic over \(\Set\), is the prototypical such category~\cite[Example~5.3]{EGVdL}; these are the sufficient conditions referred to in the abstract. The same applies to Corollary~\ref{Corollary Higher Cech Well Defined}, so that the higher \Tsjech\ derived functors are well defined in this generality too.
\end{proof}

Note in passing that, since \(\HH_{n}F(Y)\) is formed from projective presentations alone, Theorem~\ref{Theorem Comparison General} shows that the homology objects \(H_{n+1}(Y,F)\) do not depend on the chosen comonad, but only on the projective objects it provides, in accordance with the classical fact that comonadic homology depends only on the induced projective class~\cite{Barr-Beck}.

\section*{Acknowledgement}
This paper would not have existed without the invaluable help of Dr Guram Donadze, who took the time to answer the second author's questions concerning key ideas in~\cite{Donadze-Inassaridze-Porter}, on the occasion of their 2016 encounter in Thiruvananthapuram, at the Indian Institute of Science Education and Research.

Generative AI tools (Anthropic's Claude Fable 5.1) were used to assist in revising and restructuring a draft written by the authors, with the aim of improving its presentation; they were not used to generate new mathematical results. The authors reviewed and edited all output and take full responsibility for the content of this article.


\end{document}